\newtheorem{theorem}{Theorem}[section]
\newtheorem{lemma}[theorem]{Lemma}
\newtheorem{corollary}[theorem]{Corollary}
\numberwithin{equation}{section}
\newcommand{\tuplea}{\textbf{a}}
\newcommand{\tupleb}{\textbf{b}}
\newcommand{\tupleg}{\textbf{g}}
\newcommand{\tupleh}{\textbf{h}}
\newcommand{\tupleu}{\textbf{u}}
\newcommand{\tuplex}{\textbf{x}}
\newcommand{\tupley}{\textbf{y}}
\newcommand{\tuplez}{\textbf{z}}
\newcommand{\tuplealpha}{\boldsymbol{\alpha}}
\newcommand{\tuplebeta}{\boldsymbol{\beta}}
\newcommand{\tuplegamma}{\boldsymbol{\gamma}}
\newcommand{\tuplezeta}{\boldsymbol{\zeta}}
\newcommand{\tupleeta}{\boldsymbol{\eta}}
\newcommand{\tuplelambda}{\boldsymbol {\lambda}}
\newcommand{\tuplemu}{\boldsymbol{\mu}}
\newcommand{\tuplexi}{\boldsymbol{\xi}}
\newcommand{\sinc}{\text{sinc}}
\newcommand{\mmod}[1]{\,\,(\text{mod}\,\,#1)}
\title{Simultaneous equations and inequalities}
\author{Constantinos Poulias}
\address{CP: School of Mathematics, Fry Building, Woodland Road, Clifton, Bristol BS8 1UG, UK.}\email{constantinos.poulias@gmail.com}
\subjclass[2020]{11D75, 11D72, 11P55, 11L07}
\keywords{Diophantine equations and inequalities, Fractional powers of integers, Davenport--Heilbronn--Freeman method, Hardy--Littlewood  method}
\begin{document}
	\date{}
	\maketitle
	\begin{abstract}
		Let $\lambda_i, \mu_j$ be  non-zero real numbers not all of the same sign and let  $a_i, b_k$ be non-zero integers not all of the same sign. We investigate a mixed Diophantine system of the shape
		\begin{equation*}  
			\begin{cases}	
				\left| \lambda_1 x_1^\theta + \cdots + \lambda_\ell x_\ell^\theta + \mu_1 y_1^\theta + \cdots + \mu_m y_m^\theta \right| < \tau \\[10pt]
				a_1 x_1^d + \cdots a_\ell x_\ell^d + b_1 z_1^d + \cdots + b_n z_n^d =0,		
			\end{cases}
		\end{equation*}
		where  $ d\geq 2 $ is an integer, $ \theta > d+1$ is real and non-integral and $ \tau $ is a positive real number. For such systems we obtain an asymptotic formula for the number of positive integer solutions $(\tuplex, \tupley, \tuplez) = (x_1, \ldots, z_n)$ inside a bounded box. Our approach makes use of a two-dimensional version of the classical Hardy--Littlewood circle method and the Davenport--Heilbronn--Freeman method. The proof involves a combination of essentially optimal mean value estimates for the auxiliary exponential sums, together with estimates stemming from the classical Weyl and Weyl--van der Corput inequalities.
	\end{abstract}

\section{Introduction}

In this paper we investigate the simultaneous solubility of inequalities and equations. Here we seek to count the number of positive integer solutions of a mixed system, consisting of a diagonal inequality of fractional degree and a diagonal integral form.

Fix non-zero real numbers $ \lambda_i, \mu_j $ not all of the same sign and non-zero integers  $ a_i, b_k$ not all of the same sign. Suppose that $ d \geq 2 $ is an integer and suppose further that $ \theta > d+ 1$ is real and non-integral. We write 
	\begin{equation}  \label{eq3.1.1}
		\begin{cases}	
			\mathfrak F (\tuplex, \tupley) = \lambda_1 x_1^\theta + \cdots + \lambda_\ell x_\ell^\theta + \mu_1 y_1^\theta + \cdots + \mu_m y_m^\theta \\[15pt]
			\mathfrak D (\tuplex, \tuplez) = a_1 x_1^d + \cdots a_\ell x_\ell^d + b_1 z_1^d + \cdots + b_n z_n^d.		
		\end{cases}
	\end{equation}
We shall write $ s = \ell + m +n$ to denote the total number of variables. Let $ \tau $ be a fixed positive real number. The Diophantine system under investigation is of the shape
	\begin{equation} \label{eq3.1.2}
		\begin{cases}	
			\left| \mathfrak F( \tuplex, \tupley) \right| < \tau \\[10pt] 
			\mathfrak D (\tuplex, \tuplez) =0.
		\end{cases}
	\end{equation}
We ask for the system 
	\begin{equation} \label{eq3.1.3}
		\mathfrak F (\tuplex, \tupley) = \mathfrak D (\tuplex, \tuplez) = 0
	\end{equation}
to admit a non-trivial (i.e. with at least one non-zero component) real solution $(\tuplex, \tupley, \tuplez) \in \mathbb R^s.$ Beyond the indefiniteness of $ \mathfrak F $ and $ \mathfrak D,$ in order to study the solubility of the system (\ref{eq3.1.2}) over the set of natural numbers one has to impose some further conditions. It is apparent that we must ask for the congruence $  \mathfrak D (\tuplex, \tuplez) \equiv 0 \mmod{p^\nu} $ to be soluble for all prime powers $ p^\nu.$ Furthermore, for reasons associated with the application of the circle method, one has to assume that the given local solutions are in fact non-singular. For us a tuple $\tupleeta = (\tuplex^\star, \tupley^\star ,\tuplez^\star) \in \mathbb R^s$ which satisfies the system of equations (\ref{eq3.1.3}) is called a non-singular solution of the system (\ref{eq3.1.2}) if the Jacobian matrix
	\begin{equation*}
		\frac{ \partial (\mathfrak F, \mathfrak D)}{ \partial (\eta_1, \ldots, \eta_s)}
	\end{equation*}
has full rank. We say that the system (\ref{eq3.1.2}) satisfies the \textit{local solubility condition} if the system (\ref{eq3.1.3}) possesses a non-singular real solution and the congruence $  \mathfrak D (\tuplex, \tuplez) \equiv 0 \mmod{p^\nu} $ possesses a non-singular solution for all prime powers $ p^\nu.$
We write $ \tupleeta = (\tuplex^\star, \tupley^\star, \tuplez^\star) \in \mathbb R^s$ to denote  a non-singular solution of the system (\ref{eq3.1.3}). Using the implicit function theorem in a standard fashion, one can deduce the existence of a real solution $\tupleeta$ to the system (\ref{eq3.1.3}) with $\eta_i \neq 0 $ for all $i.$ For the sake of completeness we include a proof of this in Lemma \ref{lem3.6.4}. Suppose that $ \tupleeta$ is such a solution. Using the homogeneity of the system (\ref{eq3.1.3}) one may suppose that $0 < |\eta_i| < 1/2$ for all $i.$ By changing signs if necessarily to the coefficients, we can assume that $ 0 < \eta_i < 1/2 $ for all $i.$ From now on we suppose that $\tupleeta = (\tuplex^\star, \tupley^\star, \tuplez^\star) \in \mathbb R^s$ is such a non-singular real solution of the system  (\ref{eq3.1.3}). 

Let $P$ be a sufficiently large positive real parameter. We write $ \mathcal N (P)$ to denote the number of positive integer solutions $(\tuplex, \tupley, \tuplez)$ of the system (\ref{eq3.1.2}) with 
	\begin{equation*} 
		\frac{1}{2} \tuplex^\star P < \tuplex \leq 2 \tuplex^\star P, \hspace{0.1in} \frac{1}{2} \tupley^\star P < \tupley \leq 2 \tupley^\star P, \hspace{0.1in} \frac{1}{2} \tuplez^\star P < \tuplez \leq 2 \tuplez^\star P.
	\end{equation*}
Our aim is to establish an asymptotic formula for the counting function $ \mathcal N (P)$ as $ P \to \infty.$ Throughout the paper we make use of standard notation in the field such as Vinogradov and Landau symbols. We recall this notation at the end of the introduction. For the sake of clarity let us mention here that for $ x \in \mathbb R $ we write $ \lfloor x \rfloor = \max \{ n \in \mathbb Z : n \leq x \}$ and $\lceil x \rceil = \min \{ n \in \mathbb Z: n \geq x \}$ to denote the floor and the ceiling function respectively.

Before we state our result we make a comment about two special cases. Suppose that $ \ell =0.$ It is apparent from \cite[Theorem 1.1]{poulias_ineq_frac} and the seminal work of Davenport and Lewis \cite{davenport_lewis_homo_add_eqts} that in such a case and provided that $ m \geq \left( \lfloor 2\theta  \rfloor +1 \right) \left( \lfloor 2\theta  \rfloor +2 \right)+1$ and $ n \geq d^2 + 1,$ one certainly has $ \mathcal N (P) \gg P^{m+n - (\theta +d)}.$ Suppose now that $ m = n =0.$ Here one would (in principle) be able to obtain an asymptotic formula for the counting function $ \mathcal N (P)$  provided that $ s= \ell \geq \ell_0(\theta) + 1,$ where $ \ell(\theta) $ is any natural number for which one has the estimate
	\begin{equation*}
		\int_0^1 \int_0^1 \left| \sum_{1 \leq x \leq P} e(\alpha_d x^d + \alpha_\theta x^\theta) \right|^{\ell_0 (\theta)} \text d \tuplealpha \ll P^{ \ell_0 (\theta) - \theta + \epsilon}. 
	\end{equation*}
Here $ \text d \tuplealpha $ stands for $ \text d \alpha_d \text d \alpha_\theta.$ Our first result establishes this observation.

	\begin{theorem} \label{thm3.1.1}
		Suppose that $ d \geq 2$ is an integer and suppose further that $ \theta > d+1  $ is real and non-integral. Let $ \tau $ be a fixed positive real number. Consider the system
		\begin{equation}  \label{eq3.1.4}
			\left| \mathfrak F( \tuplex, \tupley) \right| < \tau \hspace{0.1in} \text{and} \hspace{0.1in} \mathfrak D (\tuplex, \tuplez) =0,
		\end{equation}
		where $ \mathfrak F$ is an indefinite generalised polynomial and $ \mathfrak D$ is an indefinite integral polynomial defined in (\ref{eq3.1.1}). Suppose that $m=n=0$ and suppose further that the system (\ref{eq3.1.4}) satisfies the local solubility condition, namely the system (\ref{eq3.1.3}) possesses a non-singular real solution and the congruence $  \mathfrak D (\tuplex, \tuplez) \equiv 0 \mmod{p^\nu} $ possesses a non-singular solution for all prime powers $ p^\nu.$ Then, provided that $ s\geq \left( \lfloor 2\theta  \rfloor +1 \right) \left( \lfloor 2\theta  \rfloor +2 \right) + 1,$ one has that there exists a positive real number $ C = C(\tuplelambda,\tuplea, \theta, d,s)$ such that 
			\begin{equation} \label{eq3.1.5}
				\mathcal N (P) = 2 \tau C P^{s - (\theta+d)} + o \left( P^{s - (\theta+d)} \right),
			\end{equation}
		as $ P \to \infty.$ In particular, the number of positive integer solutions $ \tuplex \in [1,P]^s$ of the system (\ref{eq3.1.4}) is $ \gg  P^{s- (\theta +d)},$ where the implicit constant is a positive real number, which depends on $s, \lambda_i,  a_i, \theta, d$ and $ \tau.$ 
	\end{theorem}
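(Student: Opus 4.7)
The approach is a two-dimensional hybrid of the classical Hardy--Littlewood circle method, which handles the diagonal equation $\mathfrak D = 0$, and the Davenport--Heilbronn--Freeman method, which handles the fractional inequality $|\mathfrak F| < \tau$. I would introduce the Fourier kernel $K_\tau(\alpha) = \sin(2\pi\tau\alpha)/(\pi\alpha)$, whose Fourier transform is the indicator of $(-\tau,\tau)$, together with the generating sums
\[
f_i(\alpha_d, \alpha_\theta) \;=\; \sum_{x \in I_i} e\bigl( a_i \alpha_d x^d + \lambda_i \alpha_\theta x^\theta \bigr), \qquad I_i \;=\; \bigl(\tfrac12 x_i^\star P,\, 2 x_i^\star P\bigr].
\]
Orthogonality in $\alpha_d \in [0,1]$ combined with Fourier inversion in $\alpha_\theta \in \mathbb R$ then furnishes the identity
\[
\mathcal N(P) \;=\; \int_0^1 \!\!\int_{-\infty}^{\infty} \prod_{i=1}^{\ell} f_i(\alpha_d, \alpha_\theta) \, K_\tau(\alpha_\theta) \, d\alpha_\theta \, d\alpha_d,
\]
which is the starting point of the analysis.

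Next, I would dissect the $(\alpha_d, \alpha_\theta)$-plane into product regions. For $\alpha_d \in [0,1]$ the plan is to take the standard Hardy--Littlewood decomposition $\mathfrak M_d \cup \mathfrak m_d$, with $\mathfrak M_d$ consisting of small neighbourhoods of rationals $b/q$ with $q$ bounded by a small power of $P$. For $\alpha_\theta \in \mathbb R$ I would adopt the three-fold Davenport--Heilbronn decomposition $\mathfrak M_\theta \cup \mathfrak m_\theta \cup \mathfrak t_\theta$: a narrow major arc around the origin, an intermediate minor arc, and a trivial arc at large $|\alpha_\theta|$. On the doubly major region $\mathfrak M_d \times \mathfrak M_\theta$ each $f_i$ admits the usual approximation $q^{-1} S_i(q,b)\, v_i(\alpha_d - b/q, \alpha_\theta)$, with $S_i(q,b)$ a complete exponential sum and $v_i$ a smooth integral. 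Summation of $S_i(q,b)$ over Farey fractions produces a singular series $\mathfrak S$, integration of $v_i$ produces a singular integral $\mathfrak J$, and combining these yields the conjectured main term $2\tau\, \mathfrak S \mathfrak J\, P^{s-(\theta+d)}$. Positivity of $\mathfrak S$ follows from the non-singular $p$-adic solutions supplied by the local solubility hypothesis, while positivity of $\mathfrak J$ follows from the non-singular real solution $\tupleeta$ via the implicit function theorem and Fourier inversion.

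For the remaining regions the plan is to lean on the hypothesised two-dimensional mean value estimate for $\ell_0(\theta) = (\lfloor 2\theta \rfloor + 1)(\lfloor 2\theta \rfloor + 2)$ factors, combined with pointwise minor arc bounds. On $\mathfrak m_d$ the classical Weyl inequality applied to the monomial $\alpha_d x^d$ provides a saving $f_i \ll P^{1-\rho}$ for some $\rho > 0$, and on $\mathfrak m_\theta$ the Weyl--van der Corput inequality applied to the smooth phase $\lambda_i \alpha_\theta x^\theta$ provides an analogous pointwise saving. Peeling off a small number of such factors from $\prod f_i$ and treating the remaining factors by the mean value estimate should dispatch each of the regions $\mathfrak m_d \times \mathfrak M_\theta$, $\mathfrak M_d \times \mathfrak m_\theta$ and $\mathfrak m_d \times \mathfrak m_\theta$, provided $s \geq \ell_0(\theta) + 1$. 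On the trivial arc $\mathfrak t_\theta$ I would exploit the decay $K_\tau(\alpha_\theta) \ll |\alpha_\theta|^{-1}$ together with a crude bound on the remaining factors to obtain a negligible contribution.

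The main obstacle will be calibrating the arc thresholds and the pointwise Weyl-type savings so that the totality of error terms is of strictly smaller order than $P^{s - (\theta + d)}$. Because $\theta$ is non-integral, the derivative-based Weyl--van der Corput machinery on $\mathfrak m_\theta$ requires $\alpha_\theta x^\theta$ to vary over a sufficiently wide range to yield a genuine gain, whereas the rational approximation conditions driving the Weyl bound on $\mathfrak m_d$ live on a separate scale. Reconciling these two regimes, so that the pointwise savings on each type of minor arc align cleanly with the admissible exponent $\ell_0(\theta)$ in the mean value estimate, is the delicate part of the argument; a secondary technical point is to verify strict positivity of the singular integral $\mathfrak J$ through a change of variables on a neighbourhood of $\tupleeta$ afforded by the full-rank Jacobian condition.
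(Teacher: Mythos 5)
Your high-level plan — a product dissection of $[0,1)\times\mathbb R$, a two-dimensional mean value estimate involving $A_\theta = (\lfloor 2\theta\rfloor+1)(\lfloor 2\theta\rfloor+2)$, a Hua-type ``sup out a few factors'' argument on the minor arcs, and the singular series/integral on the major arcs — matches the paper's. However, the kernel you write down does not deliver the result, and this is not a cosmetic issue but the precise point where Freeman's contribution is indispensable.

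You set $K_\tau(\alpha) = \sin(2\pi\tau\alpha)/(\pi\alpha)$ and assert the exact identity $\mathcal N(P) = \int_0^1\int_{\mathbb R}\prod f_i\cdot K_\tau\,d\alpha_\theta\,d\alpha_d$. This kernel is not absolutely integrable: it decays only like $|\alpha_\theta|^{-1}$. Consequently the identity is at best a conditionally convergent (principal value) statement and cannot be dissected piece by piece. Concretely, on the trivial arc $|\alpha_\theta|\geq P^\omega$, decompose dyadically $|\alpha_\theta|\in(2^\rho,2^{\rho+1}]$. The relevant mean value bound (the paper's Lemma \ref{lem3.3.5}) gives $\int_{-\kappa}^\kappa\int_0^1|f_i|^{A_\theta}d\tuplealpha\ll\kappa P^{A_\theta-(\theta+d)+\epsilon}$, linear in $\kappa$. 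With $|K_\tau|\ll 2^{-\rho}$ and $\kappa\asymp 2^{\rho}$ the two factors cancel exactly, so each dyadic block contributes $\gg P^{A_\theta-(\theta+d)+\epsilon}$ and the sum over $\rho$ diverges. The paper avoids this by using Freeman's kernels $K_\pm$, built with the shrinking parameter $\widetilde\tau=\tau(\log P)^{-1}$, which satisfy $K_\pm(\alpha)\ll(\log P)|\alpha|^{-2}$; that extra power of $|\alpha|^{-1}$ is exactly what makes the trivial-arc sum converge to $o(P^{s-(\theta+d)})$. Moreover, because $\hat K_\pm$ sandwich the indicator of $(-\tau,\tau)$ rather than equal it, one only has $R_-(P)\leq\mathcal N(P)\leq R_+(P)$, and it is the $\widetilde\tau\to 0$ feature that lets both $R_\pm$ be shown asymptotically equal to $2\tau C P^{s-(\theta+d)}$ — this is precisely the mechanism that upgrades a Davenport--Heilbronn lower bound into the claimed asymptotic \emph{formula}. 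You name the ``Davenport--Heilbronn--Freeman method'' but in fact use the pre-Freeman (indeed pre-Fejér) kernel, and without replacing it your argument can neither dispose of the trivial arcs nor produce an asymptotic equality.

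Two smaller points, neither fatal. First, the pointwise savings on the minor arcs are not quite as you describe: on $[0,1)\times\mathfrak m$ one must first apply Weyl differencing $d$ times to annihilate the $a_i\alpha_d x^d$ term before the derivative test can bite on $\lambda_i\alpha_\theta x^\theta$; on $\mathfrak n_\xi\times\mathfrak M$ one applies the Weyl--van der Corput inequality and then shows the small $\alpha_\theta$ phase only produces an $O(P^{\delta_0}H^2)$ perturbation. You have the tools right but the two regimes require different preprocessing. Second, the mean value estimate you call ``hypothesised'' is an input from \cite{poulias_approx_TDI_system}, so it should be cited as a known theorem rather than assumed.
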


Certainly more interesting is the case where in (\ref{eq3.1.1}) one has $ m + n \neq  0.$ Our next result examines this case when the total number of variables $s$ is in an intermediate range compared to the number of variables needed in the scenarios where $\ell  = 0$ or $m=n =0.$
	
	\begin{theorem}  \label{thm3.1.2}
		Suppose that $ d \geq 2$ is an integer and suppose further that $ \theta > d+1 $ is real and non-integral.  Let $ \tau $ be a fixed positive real number. Consider the system
			\begin{equation}  \label{eq3.1.6}
				\left| \mathfrak F( \tuplex, \tupley) \right| < \tau \hspace{0.1in} \text{and} \hspace{0.1in} \mathfrak D (\tuplex, \tuplez) =0,
			\end{equation}
		where $ \mathfrak F$ is an indefinite generalised polynomial and $ \mathfrak D$ is an indefinite integral polynomial defined in (\ref{eq3.1.1}). We write
			\begin{equation} \label{eq3.1.7} 
				A_\theta =  \left( \lfloor 2\theta  \rfloor +1 \right) \left( \lfloor 2\theta  \rfloor +2 \right) \hspace{0.1in} \text{and} \hspace{0.1in} A_d = d^2.
			\end{equation}
		Moreover, we set
			\begin{equation*} 
				s_{ \min} = \left \lceil  \max \left\{ A_\theta + n, \hspace{0.04in}  \frac{A_d}{A_\theta} m + A_\theta \right\} \right \rceil + 1
			\end{equation*}
		and 
			\begin{equation*} 
				s_{\normalfont \max} = \left \lfloor \min  \left\{ A_\theta + A_d, \hspace{0.04in} A_\theta + \frac{A_d}{A_\theta}m + n \right\} \right \rfloor + 1.
			\end{equation*}
		Suppose that the system (\ref{eq3.1.6}) satisfies the following conditions.
			\begin{itemize}
				\item[(a)] The system (\ref{eq3.1.6}) satisfies the local solubility condition,  namely the system (\ref{eq3.1.3}) possesses a non-singular real solution and the congruence $  \mathfrak D (\tuplex, \tuplez) \equiv 0 \mmod{p^\nu} $ possesses a non-singular solution for all prime powers $ p^\nu.$
				\item[(b)] One has $ \ell \geq \max \{ \lceil2\theta (1 -n/d) \rceil, \hspace{0.05in} 1\}, \hspace{0.05in}  0 \leq m \leq A_\theta$ and $ 0 \leq n \leq A_d.$   
				\item[(c)] One has $ \ell + m \geq A_\theta +1 $ and $ \ell + n \geq A_d +1 .$ 
				\item[(d)] For the total number of variables $ s = \ell + m +n $ one has $ s_{\normalfont \min} \leq s \leq s_{\normalfont \max}.$ 
			\end{itemize}
		Then, there exists a positive real number $ C = C(\tuplelambda,\tuplemu, \tuplea, \tupleb, \theta,d,s),$ such that as $ P \to \infty$ one has
			\begin{equation} \label{eq3.1.8} 
				\mathcal N (P) = 2 \tau C P^{s - (\theta+d)} + o \left( P^{s - (\theta+d)} \right).
			\end{equation}
		In particular, the number of positive integer solutions $(\tuplex, \tupley, \tuplez) \in [1,P]^\ell \times [1,P]^m \times [1,P]^n $ of the system (\ref{eq3.1.6}) is $ \gg P^{s- (\theta +d)},$ where the implicit constant is a positive real number, which depends on $s, \lambda_i, \mu_j, a_i, b_k, \theta, d$ and $ \tau.$
	\end{theorem}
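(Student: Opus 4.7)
The plan is to combine the Davenport--Heilbronn--Freeman method with a two-dimensional Hardy--Littlewood dissection. Denoting by $X_i, Y_j, Z_k$ the centres of the boxes for the variables $x_i, y_j, z_k$, I would introduce
\begin{align*}
f_i(\alpha,\beta) &= \sum_{X_i/2 < x \leq 2X_i} e(\lambda_i \alpha x^\theta + a_i \beta x^d), \\
g_j(\alpha) &= \sum_{Y_j/2 < y \leq 2Y_j} e(\mu_j \alpha y^\theta), \\
h_k(\beta) &= \sum_{Z_k/2 < z \leq 2Z_k} e(b_k \beta z^d),
\end{align*}
together with a Freeman-type kernel $K_\tau(\alpha)$ whose Fourier transform approximates $\mathbf{1}_{(-\tau,\tau)}$. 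Then, up to an admissible error, $\mathcal N(P)$ equals
\begin{equation*}
\int_{\mathbb R} \int_0^1 \Bigl(\prod_{i=1}^\ell f_i(\alpha,\beta)\Bigr) \Bigl(\prod_{j=1}^m g_j(\alpha)\Bigr) \Bigl(\prod_{k=1}^n h_k(\beta)\Bigr) K_\tau(\alpha)\, d\beta\, d\alpha.
\end{equation*}

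I would then dissect the $(\alpha,\beta)$-plane. In the $\alpha$ direction, partition into a central arc $\mathfrak{C}$, a minor arc $\mathfrak{c}$, and a trivial arc $\mathfrak{t}$ in the usual Freeman style; in the $\beta$ direction, use standard Farey-type major arcs $\mathfrak{M}$ and their complement $\mathfrak{m}$ in $[0,1]$. On the distinguished region $\mathfrak{C} \times \mathfrak{M}$, the classical approximations of the exponential sums together with orthogonality and Fourier inversion yield the main term $2\tau C P^{s-(\theta+d)}$, with $C$ factorising into the singular integral (positive by the existence of the non-singular real solution $\tupleeta$) and the singular series (positive by the $p$-adic non-singular solubility asserted in (a)). This step follows the scheme of Theorem \ref{thm3.1.1} with additional $h_k$-factors and is essentially routine.

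The main obstacle is showing that each of the remaining pieces contributes $o(P^{s-(\theta+d)})$. The strategy is to apply Hölder's inequality, distributing an $L^{A_\theta}$ moment (in $\alpha$) among the fractional-power sums $\{f_i\}, \{g_j\}$ and an $L^{A_d}$ moment (in $\beta$) among the $d$-th power sums $\{f_i\}, \{h_k\}$. The required mean values
\begin{equation*}
\int_0^1 |g_j(\alpha)|^{A_\theta}\, d\alpha \ll P^{A_\theta - \theta + \epsilon}, \qquad \int_0^1 |h_k(\beta)|^{A_d}\, d\beta \ll P^{A_d - d + \epsilon}
\end{equation*}
follow respectively from the hypothesis underpinning $A_\theta$ and from Hua's inequality; the analogous estimates for $f_i$ (in either variable, with the other frozen) are obtained by treating the remaining exponential as a bounded twist. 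Pointwise minor-arc savings are then supplied by a Weyl--van der Corput bound on $f_i$ or $g_j$ over $\mathfrak{c}$, and by a standard Weyl inequality on $f_i$ or $h_k$ over $\mathfrak{m}$. Conditions (b) and (c) ensure that each sub-system has enough variables to invoke the relevant mean value, while the arithmetic defining $s_{\min}$ and $s_{\max}$ encodes exactly the combinatorics of Hölder: $s_{\min}$ produces enough factors to absorb the mean-value loss, whereas $s_{\max}$ ensures that the Weyl saving actually survives the interpolation. The trivial arc $\mathfrak{t}$ is handled by the rapid decay of $K_\tau$ combined with a crude pointwise bound for the exponential sums.
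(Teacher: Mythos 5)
Your high-level plan (Freeman kernel, mixed Davenport--Heilbronn / Hardy--Littlewood dissection, H\"older interpolation on the minor and trivial arcs, classical major-arc approximation) matches the paper's scaffolding. However, there is a concrete gap in the mean-value toolkit that would prevent the proof from reaching the claimed range of $s$.

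The decisive ingredient in the paper's minor-arc treatment is the \emph{two-dimensional} mean value
\begin{equation*}
\int_{-\kappa}^{\kappa}\int_0^1 \left|f_i(\alpha_d,\alpha_\theta)\right|^{A_\theta}\,\mathrm{d}\alpha_d\,\mathrm{d}\alpha_\theta \;\ll\; \kappa\, P^{A_\theta-(\theta+d)+\epsilon},
\end{equation*}
which saves the full $\theta+d$ because, after orthogonality, it counts solutions satisfying \emph{simultaneously} $\sigma_{t,d}(\tuplex)=0$ and $|\sigma_{t,\theta}(\tuplex)|<\kappa^{-1}$; this requires the approximately translation--dilation invariant mean value theorem (Theorem~\ref{thm3.3.3} in the paper). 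You instead propose to estimate $f_i$ ``in either variable, with the other frozen, treating the remaining exponential as a bounded twist.'' That yields only the one-dimensional savings $P^{A_\theta-\theta+\epsilon}$ (freezing $\alpha_d$) or $P^{A_d-d+\epsilon}$ (freezing $\alpha_\theta$), never $P^{A_\theta-(\theta+d)+\epsilon}$, because freezing discards one of the two constraints. In the paper's H\"older decomposition this quantity is exactly $\Xi_{f_i}$ and enters with exponent $\omega_1=1-(s'-A_\theta)/A_d$. For any $s$ strictly below $A_\theta+A_d+1$ one has $\omega_1>0$, so the loss is a genuine factor of $P^{d\omega_1}$, which the tiny Weyl saving $(\sup|f_i|)^\delta\ll P^{\delta(1-\eta)}$ cannot absorb. (For the mixed quantities $\Xi_{f_i,g_j}$ and $\Xi_{f_i,h_k}$ the freezing device does work, since the other factor depends only on the frozen variable; the problem is isolated to $\Xi_{f_i}$, but that term is unavoidable away from the top of the range.)

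Two smaller points: the mean value $\int|g_j(\alpha)|^{A_\theta}\,\mathrm{d}\alpha$ must be taken over $[-\kappa,\kappa]$, not $[0,1]$, and then controlled uniformly in $\kappa$ (the Watt-type argument); and since $g_j$ is an exponential sum of fractional degree, the pointwise minor-arc saving for it comes from a van der Corput $k$-th derivative estimate (after Weyl differencing), not a plain Weyl inequality, which only applies to the polynomial phases. These are fixable with care, but the missing two-dimensional bound for $\Xi_{f_i}$ is the step that would make the argument fail as written.
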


Observe that the class of systems for which Theorem \ref{thm3.1.2} applies is non-empty. Let us list a few examples with explicit values for the parameters $\theta, d, \ell, m $ and $n,$ for which Theorem \ref{thm3.1.2} is applicable. In Table \ref{table_parameters_syst} below, for each choice we make for the parameters $m$ and $n$ we record the number $\ell $ of common variables required to apply Theorem \ref{thm3.1.2}. One may choose any real number $\theta$ in the given interval. The shape of the intervals has been chosen merely for convenience in the computations. Certainly, one can apply the theorem when $ \theta$ is the endpoint of the given interval (when non-integral).
  
\begin{table}[ht] \label{table_parameters_syst} 
	\caption{ Some values for the parameters $d, \theta, m, n, \ell$} 	
	\centering 
	\begin{tabular}{c c c c c c } 
		\hline
		$d$ \hspace{0.2in} & $\theta$ \hspace{0.2in} & $m$ \hspace{0.2in} & $n$ \hspace{0.2in} & $\ell$ \hspace{0.2in} & $s$\\[0.5ex]\hline\hline 
		$2$ \hspace{0.2in} & $(3, \hspace{0.05in} 3.5)$ \hspace{0.2in}  & $1$ \hspace{0.2in} & $0$ \hspace{0.2in} & $56$ \hspace{0.2in} & $57$  \\[5pt]
%
		$2$ \hspace{0.2in} & $(3, \hspace{0.05in} 3.5)$ \hspace{0.2in} & $40$ \hspace{0.2in} & $2$ \hspace{0.2in} & $ \{17, 18, 19\}$ \hspace{0.2in} & $\{59, 60, 61\}$  \\[5pt]
		$2$ \hspace{0.2in} & $(3.5, \hspace{0.05in} 4)$ \hspace{0.2in}  & $1$ \hspace{0.2in} & $0$ \hspace{0.2in} & $72$ \hspace{0.2in} & $73$  \\[5pt]
%
%
		$2$ \hspace{0.2in} & $(3.5, \hspace{0.05in} 4)$ \hspace{0.2in}  & $40$ \hspace{0.2in} & $2$ \hspace{0.2in} & $\{33, 34, 35\}$ \hspace{0.2in} & $\{75,76, 77\}$  \\[5pt]
		$3$ \hspace{0.2in} & $(4, \hspace{0.05in} 4.5)$ \hspace{0.2in}  & $1$ \hspace{0.2in} & $1$ \hspace{0.2in} & $90$ \hspace{0.2in} & $92$  \\[5pt]			
		$3$ \hspace{0.2in} & $(4.5, \hspace{0.05in} 5)$ \hspace{0.2in}  & $1$ \hspace{0.2in} & $1$ \hspace{0.2in} & $110$ \hspace{0.2in} & $112$  \\[5pt]		
		$3$ \hspace{0.2in} & $(4.5, \hspace{0.05in} 5)$ \hspace{0.2in}  & $55$ \hspace{0.2in} & $7$ \hspace{0.2in} & $\{56, 57,58\}$ \hspace{0.2in} & $\{118, 119, 120\}$  \\[5pt]
		$4$ \hspace{0.2in} & $(5, \hspace{0.05in} 5.5)$ \hspace{0.2in}  & $1$ \hspace{0.2in} & $1$ \hspace{0.2in} & $132$ \hspace{0.2in} & $134$  \\[5pt]				
		\hline 
	\end{tabular}
\end{table}

We say now a word about the asymptotic formula. The positive real number $ C = C(\tuplelambda,\tuplemu, \tuplea, \tupleb, \theta,d,s)$ appearing in the asymptotic formula (\ref{eq3.1.5}) (and similarly in the case of the asymptotic formula (\ref{eq3.1.8})) turns out to be a product of the shape $ C = \mathfrak J_0 \mathfrak S .$ Here
	\begin{equation*}
		\mathfrak J_0 = \int_{-\infty}^\infty \int_{-\infty}^\infty \left( \int_{\mathcal B} e \left( \beta_\theta \mathfrak F (\tuplex, \tupley) + \beta_d \mathfrak D (\tuplex, \tuplez) \right) \text d \tuplex \text d \tupley \text d \tuplez  \right) \text d \tuplebeta,
	\end{equation*}
where 
	\begin{equation*}
		\mathcal B  = \bigtimes_{i=1}^\ell \left[ \frac{1}{2} x_i^\star, 2 x_i^\star \right] \bigtimes_{j=1}^m \left[ \frac{1}{2} y_j^\star, 2 y_j^\star \right] \bigtimes_{k=1}^n \left[  \frac{1}{2} z_k^\star, 2 z_k^\star \right]
	\end{equation*}
is a box containing in its interior a non-singular solution $\tupleeta = (\tuplex^\star, \tupley^\star, \tuplez^\star)$ of the system (\ref{eq3.1.3}). The singular integral $ \mathfrak J_0$ is essentially Schmidt's singular integral. The singular series $ \mathfrak S,$ which captures the arithmetic behind the equation $ \mathfrak D(\tuplex, \tuplez) = 0,$ is given by
	\begin{equation*}
		\mathfrak S = \sum_{q=1}^\infty \sum_{\substack{a = 1 \\ (a,q)=1}}^q T(q, a), 	
	\end{equation*}	
where	
	\begin{equation*}
	T(q, a) = q^{-(\ell + n)} \prod_{i =1}^\ell S(q, aa_i) \prod_{k=1}^n S(q, ab_k),
	\end{equation*}
and for $a \in \mathbb Z$ and $q \in \mathbb N $ we write
	\begin{equation*}
		S (q, a) = \sum_{z=1}^q e \left(\frac{a z^d}{q} \right).
	\end{equation*}

By the assumptions made in Theorem \ref{thm3.1.2} we see that our conclusion is valid for systems for which the total number of variables $ s = \ell + m + n $ satisfies $ A_\theta + 1  \leq s \leq A_\theta + A_d + 1.$  Note that when $m=n=0$ in Theorem \ref{thm3.1.1} we assume that $ s= \ell \geq A_\theta + 1,$ with $A_\theta $ defined in (\ref{eq3.1.7}). The treatment of the minor arcs in the proof of Theorem \ref{thm3.1.1} follows  by using a Hua's type inequality 
	\begin{equation*}
		\int_{\mathcal B} \left| f(\alpha_d, \alpha_\theta) \right|^s \text d \tuplealpha \ll \left( \sup_{(\alpha_d, \alpha_\theta) \in \mathcal B} \left| f(\alpha_d, \alpha_\theta) \right| \right)^{s-2t} \int_{\mathcal B} \left| f(\alpha_d, \alpha_\theta) \right|^{2t} \text d \tuplealpha,
	\end{equation*}
as in \cite{poulias_ineq_frac}, where for $ (\alpha_d, \alpha_\theta) \in \mathbb R^2$ we write
	\begin{equation*}
		f(\alpha_d, \alpha_\theta) = \sum_{  1 \leq x \leq  P} e(\alpha_d x^d + \alpha_\theta x^\theta),
	\end{equation*}
and where $ \mathcal B $ is a Lebesgue measurable subset of $ \mathbb R^2.$ For the case where $ m + n \neq  0$ one can adopt the methods we use in proving Theorem \ref{thm3.1.2} together with an application of H{\"o}lder's inequality to treat the additional variables, in order to deal with systems where the total number of variables is greater than $ A_\theta + A_d + 1.$ For such cases we obtain the following corollary. 

\begin{corollary} \label{cor3.1.3}
	Suppose that $ d \geq 2$ is an integer and suppose further that $ \theta > d+1  $ is real and non-integral. Let $ \tau $ be a fixed positive real number. Consider the system
		\begin{equation}  \label{eq3.1.9}
			\left| \mathfrak F( \tuplex, \tupley) \right| < \tau \hspace{0.1in} \text{and} \hspace{0.1in} \mathfrak D (\tuplex, \tuplez) =0,
		\end{equation}
	where $ \mathfrak F$ is an indefinite generalised polynomial and $ \mathfrak D$ is an indefinite integral polynomial defined in (\ref{eq3.1.1}). Suppose that the system (\ref{eq3.1.9}) satisfies the following conditions.
	\begin{itemize}	
		\item[(a)] The system satisfies the local solubility condition, namely the system (\ref{eq3.1.3}) possesses a non-singular real solution and the congruence $  \mathfrak D (\tuplex, \tuplez) \equiv 0 \mmod{p^\nu} $ possesses a non-singular solution for all prime powers $ p^\nu.$
		\item[(b)] One has  $ \ell \geq \max \{ \lceil 2 \theta (1 - n/d) \rceil, \hspace{0.05in} 1 \}, \hspace{0.05in} 0 \leq m \leq A_\theta$ and $ 0 \leq n \leq A_d,$ with $A_\theta$ and $ A_d$ as in (\ref{eq3.1.7}).  
		\item[(c)] One has $ \ell + m \geq A_\theta + 1 $ and $ \ell + n \geq A_d +1,$ with $A_\theta$ and $ A_d$ as in (\ref{eq3.1.7}). 
		\item[(d)] One has $ s = \ell + m + n \geq A_\theta + A_d + 2.$
	\end{itemize}
	Then, the number of positive integer solutions $(\tuplex, \tupley, \tuplez) \in [1,P]^\ell \times [1,P]^m \times [1,P]^n $ of the system (\ref{eq3.1.9}) is $ \gg P^{s - (\theta +d)},$ where the implicit constant is a positive real number, which depends on $s, \lambda_i, \mu_j, a_i, b_k, \theta, d$ and $ \tau.$
\end{corollary}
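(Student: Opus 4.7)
The plan is to mimic the proof of Theorem \ref{thm3.1.2}, with an additional application of Hölder's inequality on the minor arcs to absorb the surplus variables. As in that proof, one starts by expressing $\mathcal N(P)$ via the two-dimensional Davenport--Heilbronn--Freeman set-up: detect the inequality $|\mathfrak F|<\tau$ by the standard kernel $K_\tau(\alpha_\theta)$ and the equation $\mathfrak D=0$ by integrating $\alpha_d$ over $[0,1]$, so that the integrand factorises into a product of two-variable sums $g_i(\alpha_d,\alpha_\theta)$ over the $\ell$ common variables, one-variable $\theta$-sums for the $m$ variables $y_j$, and one-variable $d$-sums for the $n$ variables $z_k$. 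Then perform the same partition of $(\alpha_d,\alpha_\theta)$ into major arcs $\mathfrak M$, minor arcs $\mathfrak m$ and trivial arcs $\mathfrak t$ as in Theorem \ref{thm3.1.2}. The major arc analysis produces the expected main term $2\tau C P^{s-(\theta+d)}$ with $C = \mathfrak J_0 \mathfrak S > 0$ by the local solubility assumption (a), while the trivial arcs are handled by the decay of $K_\tau$; both parts are identical to the proof of Theorem \ref{thm3.1.2}.

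The new step is the minor arc bound. Since $s \geq A_\theta + A_d + 2$, I would select an auxiliary triple $(\ell_0, m_0, n_0)$ with $\ell_0 \leq \ell$, $m_0 \leq m$, $n_0 \leq n$, satisfying $\ell_0+m_0 \geq A_\theta+1$, $\ell_0+n_0 \geq A_d+1$, and $s_0 := \ell_0 + m_0 + n_0$ inside the range $[s_{\min}, s_{\max}]$ associated by Theorem \ref{thm3.1.2} to that sub-triple. Hölder's inequality then splits the minor arc integrand into the $s_0$-variable piece already handled by the minor arc estimate inside the proof of Theorem \ref{thm3.1.2}, together with $r := s - s_0 \geq 1$ surplus factors. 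The surplus factors are bounded pointwise on $\mathfrak m$ by the classical Weyl and Weyl--van der Corput inequalities, each contributing a saving of the shape $P^{1-\delta+\varepsilon}$. Assembling the $r$-fold pointwise savings with the mean value estimate on the $s_0$-piece gives a minor arc contribution of $o(P^{s-(\theta+d)})$, and since the main term is $\gg P^{s-(\theta+d)}$ the required lower bound follows. Note that the scaled box $\prod [\eta_i^\star P/2, 2 \eta_i^\star P]$ sits inside $[1,P]^s$ because $0<\eta_i^\star<1/2$, so the lower bound immediately passes to the larger box appearing in the statement.

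The main obstacle is the combinatorial bookkeeping for the sub-triple $(\ell_0, m_0, n_0)$: it must simultaneously satisfy every hypothesis of Theorem \ref{thm3.1.2}, leave at least one surplus factor in a group that admits a pointwise Weyl-type saving, and allow the Hölder exponents to match the available mean value estimates for $g$, $f_\theta$ and $f_d$. Conditions (b), (c) and (d) of the corollary are calibrated precisely so that such a sub-triple exists regardless of how the $s$ variables are distributed among the three groups. Once this bookkeeping is in place, the lower bound $\mathcal N(P) \gg P^{s-(\theta+d)}$ follows at once from the strict positivity of the product $\mathfrak J_0 \mathfrak S$.
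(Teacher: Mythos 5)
Your proposal follows the same route the paper itself briefly indicates: in the paragraph immediately preceding the statement the author says that for $s> A_\theta + A_d + 1$ one adopts the methods of Theorem \ref{thm3.1.2} together with an application of H\"older's inequality to treat the additional variables, and no further details are given. Your fleshed-out version is in the right spirit, but there are two points worth tightening up.

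First, your requirement that the sub-triple $(\ell_0, m_0, n_0)$ ``simultaneously satisfy every hypothesis of Theorem \ref{thm3.1.2}'' is stronger than what is needed, and in general cannot be arranged: for instance with $d=2$, $n=0$, $m=A_\theta$ one is forced to take $\ell_0 = A_d+1 = 5$, which can be strictly smaller than $\lceil 2\theta\rceil$ when $\theta$ is large, so part of hypothesis (b) fails for the sub-triple. This does not matter, because the major arc and trivial arc analyses are carried out with the full $s$-variable system, where (a) and (b) hold for the original $\ell$; the sub-triple is only used to invoke the minor arc estimate (Lemma \ref{lem3.4.12}), which requires only that $0\leq m_0\leq A_\theta$, $0\leq n_0\leq A_d$, $\ell_0\geq 1$, and $s_0 := \ell_0+m_0+n_0 \in [s_{\min}(m_0,n_0),\, s_{\max}(m_0,n_0)]$. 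Moreover the surplus factors must come from the group $\{f_i\}$, since $g_j$ depends only on $\alpha_\theta$ and $h_k$ only on $\alpha_d$, so neither admits a pointwise saving on both components of $\mathfrak p = ([0,1)\times\mathfrak m) \cup (\mathfrak n_\xi\times\mathfrak M)$. Accordingly one should take $m_0=m$, $n_0=n$, and $\ell_0 = s_{\max}(m,n)-m-n$. Under hypotheses (b)--(d) one checks that this $\ell_0$ satisfies $1\leq \ell_0$, $\ell_0+m\geq A_\theta+1$, $\ell_0+n\geq A_d+1$, and $\ell_0+m+n = s_{\max}\geq s_{\min}$, while $s\geq A_\theta+A_d+2 > s_{\max}$ forces $\ell - \ell_0 = r\geq 1$.

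Second, the pointwise Weyl and Weyl--van der Corput savings for the $r$ surplus copies of $f_i$ are not actually needed. Since Lemma \ref{lem3.4.12} already furnishes
\begin{equation*}
	\int_{\mathfrak p} \Bigl| \prod_{i=1}^{\ell_0} f_i \prod_{j=1}^{m} g_j \prod_{k=1}^{n} h_k \cdot K_\pm(\alpha_\theta) \Bigr|\, \text{d}\tuplealpha = o\left( P^{s_0-(\theta+d)} \right),
\end{equation*}
the trivial bound $|f_i|\leq P$ for each of the $r$ surplus factors gives
\begin{equation*}
	\int_{\mathfrak p} \left| \mathcal F(\tuplealpha) K_\pm(\alpha_\theta) \right| \text{d}\tuplealpha \ll P^{r} \cdot o\left( P^{s_0-(\theta+d)} \right) = o\left( P^{s-(\theta+d)} \right),
\end{equation*}
and likewise on the trivial arcs via Lemma \ref{lem3.5.1}. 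Combined with the unmodified major arc analysis, which yields a main term of order $P^{s-(\theta+d)}$ with positive constant by (a) and the positivity of $\mathfrak J_0\mathfrak S$, this gives the stated lower bound, without invoking additional pointwise savings for the surplus.
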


Having stated our results let us make a few comments regarding previous works that are of some relevance to the problem we study. The study of Diophantine inequalities for diagonal real forms begins with the work of Davenport and Heilbronn \cite{davenport-heilbr-ineq}. Many authors have engaged with studying the solubility of systems of diagonal real forms of the same degree. For example, Cook \cite{cook_quad_ineq} studied pairs of quadratic inequalities in $s=9$ variables with real algebraic coefficients. Br{\"u}dern and Cook \cite{brued_cook_pairs_cub} considered pairs of cubic inequalities in $s=15$ variables, making similar assumptions as in \cite{cook_quad_ineq}. This improved a previous result due to Pitman \cite{pitman_quadr_diag_ineq_systm1981}. Moreover,  Br{\"u}dern and Cook in \cite{brued_cook_simult_diag_eq_ineq} considered simultaneous real diagonal forms of odd degree. For systems of diagonal real forms of like odd degree $ k \geq 13$ we have the important work of Nadesalingam and Pitman \cite{nadesalingam_pitman}.  That result contains implicitly the case where the forms are multiplies of rational forms. For the case of unlike degrees we have the important work of Schmidt \cite{schmidt_systm_odd_degree_ineq} who studied systems of real (not necessarily diagonal) forms of differing odd degrees. In this work Schmidt proves the existence (without being explicitly determined) of a finite lower bound for the number of variables needed to ensure solubility. For the first time, such an explicit bound was given by Freeman \cite{freeman_syst_cubic_ineq2004} in the case of a system of cubic forms.

Using ideas from \cite{bentkus_goetze_paper}, Freeman in \cite{freeman_lower_bounds} and \cite{freeman_asymp_formula} introduced a variant of the Davenport--Heilbronn method and established the anticipated lower bound and asymptotic formula for the number of integer solutions of diagonal real forms inside a box. These results of Freeman were afterwards improved by Wooley in \cite{wooley_dioph_ineq} using an amplification method. Building on his variant of the original Davenport--Heilbronn method, Freeman considered systems of diagonal quadratic real forms in \cite{freeman_quadr_ineq} and systems of diagonal real forms of degree $d$ in \cite{freeman_syst_diag_ineq}. The results of the latter paper concern as well systems of inequalities of even degree.  Moreover, the irrationality condition that was used in \cite{freeman_quadr_ineq} is now removed, hence the obtained results concern mixed systems consisting of equations and inequalities.

For the case of additive inequalities of unlike degree we begin with the work of Parsell \cite{parsell_ineqI}. In that paper, motivated by Wooley's work  on simultaneous additive equations \cite{wooley_simult_eqts_91}, \cite{wooley_simult_addt_eqtIV} and using Wooley's methods on exponential sums over smooth numbers \cite{wooley_exp_sum_smooth_numb}, Parsell developed a two dimensional version of the Davenport -- Heilbronn method. Shortly afterwards, in \cite{parsell_ineqII} and  \cite{parsell_ineqIII} Parsell adapted Freeman's method to study the solubility of systems of diagonal real forms of unlike degree. More precisely, in \cite{parsell_ineqII} Parsell considers the case of a pair of quadratic and cubic inequalities, while in \cite{parsell_ineqIII} the focus is on $R$ simultaneous inequalities of unlike degrees $k_1 > k_2 >  \cdots >  k_R \geq 1.$ In both cases it is established the anticipated asymptotic lower bound for the number of integer solutions inside a sufficiently large box. Though it is not directly related to the present work, for some recent developments concerning systems of simultaneous additive equations one may look in the papers of Wooley \cite{wooley_pair_quadr_cub_2015}, Brandes and Parsell \cite{brandes_parsell_simult_addit_eqts} and Brandes  \cite{brandes_hasse_princ_quadr_cubic}.  

Coming now to additive problems with non--integral exponents, let us begin by saying that the first such investigations can be traced back to Segal in the 1930's \cite{segal_1933_I}, \cite{segal_1933_II}, \cite{segal_1934_ineq}. For diagonal inequalities of fractional degree the anticipated asymptotic formula for the number of integer solutions inside a box was established in \cite{poulias_ineq_frac}. Key element of the proof is an essentially optimal mean value estimate for exponential sums involving fractional powers of integers. Such a mean value estimate, which however was $P^{1/2}$ from the near optimal, was first appeared in the important work of Arkhipov and Zhitkov \cite{arkhipov_zitkov_warings_probl_non_integr_exp} concerning Waring's problem with non--integral exponent. In \cite[Theorem 1.2]{poulias_approx_TDI_system} we obtain an essentially optimal mean value estimate for exponential sums associated to Approximately Translation--Dilation invariant systems of Vinogradov type, whereas now the highest degree equation is replaced by an inequality for a generalised polynomial with leading term $x^\theta,$ where $ \theta > 2$ is real and non-integral. A special case of this result is quoted below in Theorem \ref{thm3.3.3}. 

We finish this short exposition with the paper of Chow \cite{chow_birch_shifts} which is an inequality analogue of Birch's celebrated result \cite{birch_forms}. The interested reader may look as well in the recent breakthroughs  due to Myerson \cite{myerson_quadratic} and \cite{myerson_cubic}, who obtained a remarkable improvement compared to Birch's theorem for systems of quadratic and cubic integral forms. 
 
\bigskip

\textit{Notation.} Below we collect a few pieces of notation that we use in the rest of the paper. For  $ x \in \mathbb R $ we write $ e(x) $ to denote $ e^{2 \pi i x}$ with $ i = \sqrt{-1}$ being the imaginary unit. For a complex number $z $ we write $ \overline z $ to denote its complex conjugate. For a function $f : \mathbb Z \to \mathbb C $ and for two real numbers $m, M,$ whenever we write
	\begin{equation*}
		\sum_{ m < x \leq M} f(x)
	\end{equation*}
the summation is to be understood over the integers that belong to the interval $(m, M].$ We make use of the standard symbols of Vinogradov and Landau. Namely, when for two functions $f,g$ there exists a positive real constant $ C $ such that $ | f(x) | \leq C | g(x) | $ for all sufficiently large $x$ we write $ f(x) = O (g(x))$ or $ f(x) \ll g (x).$ We write $ f \asymp g$ to denote the relation $ g \ll f \ll g.$  Furthermore, we write $ f(x) = o (g(x)) $ if $ f(x) / g(x) \to 0 $ as $ x \to \infty $ and we write $ f \sim g $ if $ f(x) / g(x) \to 1 $ as $ x \to \infty .$ Throughout, the letter $ \epsilon$ denotes a sufficiently small positive real number. Unless specified otherwise, the implicit constants in the Vinogradov and Landau symbols are allowed to depend on $\lambda_i, \mu_j, a_i, b_k, s, \theta,d, \tau, \epsilon$ and $ \tupleeta,$ where recall that $ \tupleeta = (\tuplex^\star, \tupley^\star, \tuplez^\star)$ denotes a certain non-singular real solution of the system (\ref{eq3.1.3}). Occasionally, we highlight the dependence on some of these parameters by using subscripts. The implicit constants are not allowed to depend on $P.$ For a given real number $x$ we shall write $ \lfloor x \rfloor = \max \{ n \in \mathbb Z : n \leq x \}$ and $\lceil x \rceil = \min \{ n \in \mathbb Z: n \geq x \}$ to denote the floor and the ceiling function respectively. An expression of the shape $ m < \tuplex \leq M$ where $ m < M$ and $ \tuplex = (x_1, \ldots, x_n)$ is an $n$-tuple, is to be understood as $ m< x_1, \ldots, x_n \leq M.$ In a similar fashion, an expression of the shape $ \tupley < \tuplex \leq \tuplez$ where $\tupley = (y_1, \ldots, y_n)$ and $ \tuplez = (z_1, \ldots,z_n)$ are $n$-tuples, is to be understood componentwise as $y_i < x_i \leq z_i$ for all $ 1 \leq  i\leq n.$

\section{Set up}
	
\subsection{An analytic representation for the counting function $ \mathcal N (P)$} 

Set $ \widetilde \tau = \tau (\log P)^{-1}.$ We put
	\begin{equation} \label{eq3.2.1}
		\displaystyle K_\pm (\alpha) = \frac{ \sin \left(\pi \alpha \widetilde \tau \right) \sin \left( \pi \alpha (2 \tau  \pm \widetilde \tau) \right)}{ \pi^2 \alpha^2 \widetilde \tau }.
	\end{equation}
By \cite[Lemma 1]{freeman_asymp_formula} and its proof we know that
	\begin{equation} \label{eq3.2.2}
		K_{\pm}  (\alpha) \ll_{\tau} \min \{1, | \alpha|^{-1}, (\log P) |\alpha|^{-2} \},
	\end{equation}
and
	\begin{equation} \label{eq3.2.3}
		0 \leq \int_{-\infty}^\infty e( \xi \alpha) K_{-}( \alpha) \text{d} \alpha \leq \chi_\tau (\xi) \leq \int_{-\infty}^\infty  e( \xi \alpha) K_{+}( \alpha) \text{d} \alpha \leq 1,
	\end{equation}	
where we write $\chi_\tau (\xi)$ to denote the indicator function of the interval $(-\tau, \tau),$ namely
	\begin{equation*} 
		\chi_\tau (\xi) = 			
			\begin{cases}
				1, \hspace{0.1in} \text{if} \hspace{0.1in} | \xi | < \tau, \\				
				0, \hspace{0.1in} \text{if} \hspace{0.1in} | \xi| \geq \tau.
			\end{cases}
	\end{equation*}	
Note that the expression 
	\begin{equation*} 
		\left| \int_{-\infty}^\infty  e( \xi \alpha) K_{\pm}( \alpha) \text{d} \alpha - \chi_\tau (\xi) \right|
	\end{equation*}
is zero when $ | | \xi| - \tau | > \widetilde \tau$ and at most $1$ for values of $ \xi$ such that $ | |\xi| - \tau | \leq \widetilde \tau .$ 

One can rewrite the kernel functions $K_\pm (\alpha)$ defined in (\ref{eq3.2.1}) in the shape
	\begin{equation*}
		K_\pm (\alpha) = (2 \tau  \pm \widetilde \tau) \frac{ \sin \left(\pi \alpha \widetilde \tau \right)}{ \pi \alpha \widetilde \tau} \cdot  \frac{\sin \left( \pi \alpha (2 \tau  \pm \widetilde \tau) \right)}{ \pi \alpha (2 \tau  \pm \widetilde \tau) }.
	\end{equation*}
Using a Taylor expansion one has for $ | x| < 1 $ with $ x \neq 0 $  that
	\begin{equation*}
		\frac{ \sin x}{x} = 1 + O (x^2).
	\end{equation*}
Recall that  $ \widetilde \tau = \tau (\log P)^{-1}.$ So for $ | \alpha | < 1$ and $P$ sufficiently large one has that
	\begin{equation} \label{eq3.2.4}
		K_\pm (\alpha) = 2 \tau + O \left( \left( \log P \right)^{-2} \right).
	\end{equation}

In our analysis we use various exponential sums. For $ \tuplealpha = (\alpha_d, \alpha_\theta) \in \mathbb R^2$ we define the exponential sums $ f(\alpha_d, \alpha_\theta) = f(\alpha_d, \alpha_\theta; P), \hspace{0.03in} g(\alpha_\theta )  = g (\alpha_\theta ; P)$ and $ h(\alpha_d) = h(\alpha_d ; P)$ by 
	\begin{equation*}
		\begin{split}	
			& f(\alpha_d, \alpha_\theta ; P) = \sum_{ 1 \leq x \leq P}  e(  \alpha_d x^d +  \alpha_\theta x^\theta), \\[10pt]		
			& g (\alpha_\theta ; P) = \sum_{ 1 \leq x \leq P } e (\alpha_\theta x^\theta), \\[10pt]	 
			& h(\alpha_d ; P) = \sum_{ 1 \leq x \leq P } e (\alpha_d x^d).
		\end{split}
	\end{equation*}
Moreover, we define $ F_i (\tuplealpha) = F_i (\tuplealpha ; P), \hspace{0.03in} G_j (\alpha_\theta) = G_j (\alpha_\theta ;P)$ and $ H_k (\alpha_d) = H(\alpha_d ; P)$ by 
	\begin{equation*}
		\begin{split}	
			& F_i (\alpha_d, \alpha_\theta; P) = \sum_{1 \leq x \leq P} e (a_i \alpha_d x^d + \lambda_i \alpha_\theta x^d )  \hspace{0.7in}  (1 \leq  i \leq \ell), \\[10pt]
			& G_j (\alpha_\theta ; P) =  \sum_{1 \leq x \leq P} e ( \mu_j \alpha_\theta x^\theta )  \hspace{1.45in}  (1 \leq  j \leq m), \\[10pt]
			& H_k (\alpha_d; P ) = \sum_{1 \leq x \leq P} e ( b_k \alpha_dx^d )  \hspace{1.45in} (1 \leq  k \leq n).
		\end{split}
	\end{equation*}	
Recall that $(\tuplex^\star, \tupley^\star, \tuplez^\star)$ is a non-singular real solution of the system (\ref{eq3.1.3}). We put
	\begin{equation*}
		\begin{split}	
			& f_i(\alpha_d, \alpha_\theta) = \sum_{ \frac{1}{2} x_i^\star P < x \leq 2 x_i^\star P } e ( a_i \alpha_d x^d +  \lambda_i \alpha_\theta x^\theta) \hspace{0.5in}  (1 \leq  i\leq \ell), \\[10pt]
			& g_j (\alpha_\theta) = \sum_{ \frac{1}{2} y_j^\star P < y \leq 2 y_j^\star P} e( \mu_j \alpha_\theta y^\theta) \hspace{1.3in} (1 \leq j \leq m), \\[10pt]
			& h_k (\alpha_d) = \sum_{ \frac{1}{2} z_k^\star P < z \leq 2 z_k^\star P } e( b_k \alpha_d z^d) \hspace{1.3in}  ( 1 \leq k \leq n).		
		\end{split}
	\end{equation*}
Occasionally, we may write $ f_i(\tuplealpha)$  to denote the exponential sum $f_i(\alpha_d, \alpha_\theta).$ Similarly, we write $ g_j(\tuplealpha)$ to denote the exponential sum $ g_j(\alpha_\theta)$ and $ h_k(\tuplealpha)$ to denote the exponential sum $h_k(\alpha_d).$ We do the same with the other exponential sums defined above. For future reference we note here the following relations
	\begin{equation} \label{eq3.2.5}
		\begin{split}	
			&f_i (\alpha_d, \alpha_\theta) = F \left(\alpha_d, \alpha_\theta; 2 x_i^\star P \right) -  F \left(\alpha_d, \alpha_\theta; \frac{1}{2} x_i^\star P \right), \\[10pt]	
			& g_j (\alpha_\theta) = G \left( \alpha_\theta; 2 y_j^\star P \right) - G \left(\alpha_\theta ; \frac{1}{2} y_j^\star P \right), \\[10pt]				
			& h_k (\alpha_d) = H \left(\alpha_d; 2 z_k^\star P \right) - H \left(\alpha_d ; \frac{1}{2} z_k^\star P \right).
		\end{split}
	\end{equation}	

We define the generating function
	\begin{equation*}
		\mathcal F( \tuplealpha ) = \prod_{i=1}^\ell f_i(\alpha_d, \alpha_\theta) \prod_{j=1}^m g_j(\alpha_\theta) \prod_{k=1}^n h_k(\alpha_d),  
	\end{equation*}
and set
	\begin{equation} \label{eq3.2.6}
		R_{\pm} (P) = \int_{-\infty}^\infty \int_0^1 \mathcal F( \tuplealpha ) K_{\pm} (\alpha_\theta) \text d \tuplealpha.
	\end{equation}
Using now (\ref{eq3.2.3}), together with the usual orthogonality relation 
	\begin{equation*}
			\int_0^1 e( \alpha n) \text d \alpha =
					\begin{cases}
						1, & \text{when } n = 0, \\[10pt]
						0, & \text{when } n \in \mathbb Z  \setminus \{0\}, 
					\end{cases}
	\end{equation*}
one has that
	\begin{equation*} 
		R_{-}(P) \leq \mathcal N (P) \leq R_{+}(P).
	\end{equation*}
From the above inequality it is clear that in order to establish an asymptotic formula for the counting function $ \mathcal N (P)$ it suffices to obtain asymptotic formulae for the integrals $R_{\pm}(P)$ that are asymptotically equal.

\subsection{A mixed version of the circle method}

In order to study the integrals $ R_\pm(P)$ defined in (\ref{eq3.2.6}) we apply a mixed version of the circle method. We dissect separately $ \mathbb R$ and $[0,1).$

\textit{Dissection of $\hspace{0.03in} \mathbb R.$} Here we apply a Davenport--Heilbronn dissection. Write $ \gamma = \theta - \lfloor \theta \rfloor \in (0,1)$ for the fractional part of $\theta.$ Define the parameters $ \delta_0 = \delta_0 (\theta)$ and $ \omega = \omega(\theta)$ by
	\begin{equation} \label{eq3.2.7}
		\delta_0 (\theta ) = 2^{ 1 - 2\theta}  \hspace{0.3in}  \text{and} \hspace{0.3in} \omega(\theta) =  \min \left\{ \frac{1-\gamma}{12}, \hspace{0.03in} 5^{-100(\theta+d)} \right\}.
	\end{equation} 
Define the set of major, minor, and trivial arcs respectively as follows
	\begin{equation*}
		\begin{split}	
			& \mathfrak M = \left\{ \alpha_\theta \in \mathbb R : | \alpha_\theta | < P^{-\theta + \delta_0} \right\}, \\[10pt]			 
			& \mathfrak m = \left\{ \alpha_\theta \in \mathbb R : P^{-\theta + \delta_0}  \leq | \alpha_\theta | < P^{ \omega } \right\},  \\[10pt]			
			& \mathfrak t = \left\{ \alpha_\theta \in \mathbb R :  | \alpha_\theta | \geq  P^{ \omega } \right\}.
		\end{split}
	\end{equation*}

\textit{ Dissection of $ \hspace{0.03in} [0,1).$} Here we apply a classical Hardy--Littlewood dissection into major and minor arcs. Pick a parameter $ \xi$ satisfying 
	\begin{equation} \label{eq3.2.8}
		0 < \xi \leq \frac{\delta_0}{8}.
	\end{equation} 
For integers $ a,q $ such that $ 0 \leq a < q \leq P^\xi $ and $(a,q)=1,$ we define a major arc around the rational fraction $a/q$ to be the set
	\begin{equation*}
		\mathfrak N_\xi (q,a) = \{ \alpha_d \in [0,1): \hspace{0.05in} \left| \alpha_d - a/q \right| < P^{-d + \xi} \}.
	\end{equation*}
We now form the union 
	\begin{equation*}
		\mathfrak N_\xi = \bigcup_{ \substack{ 0 \leq a <  q \leq P^\xi \\ (a,q) =1 }} 	\mathfrak N_\xi (q,a),
	\end{equation*}
and call this the set of major arcs. Note that $ \mathfrak N_\xi$ is a union of disjoint sets. Indeed, suppose that there exists $ \alpha_d \in [0,1) $ which belongs to two distinct major arcs $ \mathfrak N_\xi (q_1, a_1), \mathfrak N_\xi (q_2, a_2) \subset \mathfrak N_\xi.$ Since $ a_1/q_1 \neq a_2 / q_2 $ one has
	\begin{equation*}
		\displaystyle \frac{1}{q_1 q_2} \leq  \left| \frac{a_1 q_2 - a_2 q_1}{q_1 q_2} \right| \leq 2 P^{- d + \xi},
	\end{equation*}
which in turn implies that $ 1 \leq 2q_1 q_2 P^{- d + \xi} \leq 2 P^{-d + 3 \xi}.$ This is clearly impossible for large $P,$ since by our choice in (\ref{eq3.2.8}) one has $ \xi < 1/3 .$ The set of minor arcs is defined to be the complement of the set of major arcs. Denote this set by $ \mathfrak n_\xi .$ Namely we have
	\begin{equation*}
		\mathfrak n_\xi = [0,1) \setminus \mathfrak N_\xi.
	\end{equation*}

Using the above dissections one can express $ [0,1) \times \mathbb R $ as a disjoint union of sets of the shape
	\begin{equation*}
		[0,1) \times \mathbb R = \mathfrak P \cup \mathfrak p \cup \mathfrak c, 
	\end{equation*}
where we define the sets $ \mathfrak P, \mathfrak p $ and $ \mathfrak c $ as follows.
	\begin{itemize}
		\item[(1)] The set of major arcs $ \mathfrak P $ given by
			\begin{equation*}
				\mathfrak P  = \mathfrak N_\xi \times \mathfrak M.
			\end{equation*}
		\item [(2)] The set of minor arcs $ \mathfrak p $ given by
			\begin{equation*}
				\mathfrak p = \left( [0,1) \times  \mathfrak m) \right) \cup \left( \mathfrak n_\xi \times  \mathfrak M   \right).
			\end{equation*}
		\item[(3)] The set of trivial arcs $ \mathfrak c $ given by
			\begin{equation*}
				\mathfrak c  = [0,1) \times  \mathfrak t.
			\end{equation*}
	\end{itemize} 

For a Lebesgue measurable set $ \mathcal B \subset [0,1) \times  \mathbb R $ we define
	\begin{equation} \label{eq3.2.9}
		R_\pm ( P ; \mathcal B)  = \int_{\mathcal B} \mathcal F (\tuplealpha) K_\pm ( \alpha_\theta ) \text d \tuplealpha.
	\end{equation}
Recalling (\ref{eq3.2.6}), one has that
	\begin{equation} \label{eq3.2.10}
		R_\pm(P) = R_\pm(P ; \mathfrak P) + R_\pm (P ; \mathfrak p ) + R_\pm (P ; \mathfrak c).
	\end{equation} 

\subsection{An application of H{\"o}lder's inequality}
	
We begin by recalling the well known inequality
	\begin{equation*}
		| z_1 \cdots z_n | \ll | z_1 |^n + \cdots | z_n |^n,
	\end{equation*}
which is valid for all complex numbers $z_i.$ Let $ \mathcal B $ be a Lebesgue measurable set. An application of this inequality reveals that for some indices $i,j$ and $k$ one has
	\begin{equation*}
		| \mathcal F (\tuplealpha) | \ll |f_i(\alpha_d, \alpha_\theta)|^\ell |g_j(\alpha_\theta)|^m |h_k(\alpha_d)|^n.
	\end{equation*}
Let $ \delta \in [0,  1/3) $ be a real number at our disposal to be chosen at a later stage. We write  	
	\begin{equation} \label{eq3.2.11}
		\ell^\prime = \ell - \delta \hspace{0.1in} \text{and} \hspace{0.1in} s^\prime = \ell^\prime + m + n = s - \delta.
	\end{equation}
Note here that $ \ell^\prime, s^\prime \notin \mathbb N.$ The previous estimate yields
	\begin{equation} \label{eq3.2.12}
		\begin{split}	
			\int_{ \mathcal B} | \mathcal F (\tuplealpha)  K_\pm(\alpha_\theta)| \text d \tuplealpha \ll & \left(  \sup_{ (\alpha_d, \alpha_\theta) \in  \mathcal B} | f_i(\alpha_d, \alpha_\theta) | \right)^{ \delta}  \times  \\ 
			& \times \int_{\mathcal B} |f_i(\alpha_d, \alpha_\theta)|^{\ell^\prime} |g_j(\alpha_\theta)|^m |h_k(\alpha_d)|^n |K_\pm(\alpha_\theta)| \text d \tuplealpha.
		\end{split}
	\end{equation}
	
We define the following auxiliary mean values, 	
	\begin{equation*} 
		\begin{split}
			& \Xi_{f_i}( \mathcal B) = \int_{\mathcal B}  | f_i(\alpha_d, \alpha_\theta)|^{A_\theta } |K_\pm(\alpha_\theta)| \text d \tuplealpha, \\[15pt]
			&\Xi_{f_i,g_j} (\mathcal B) = \int_{\mathcal B}  |f_i(\alpha_d, \alpha_\theta)|^{A_d} |g_j(\alpha_\theta)|^{A_\theta} |K_\pm(\alpha_\theta)| \text d \tuplealpha,  \\[15pt]
			& \Xi_{f_i,h_k} (\mathcal B) = \int_{\mathcal B} |f_i(\alpha_d, \alpha_\theta)|^{A_\theta} |h_k(\alpha_d)|^{A_d} |K_\pm(\alpha_\theta)| \text d \tuplealpha, \\[15pt]
			& \Xi_{g_j,h_k} (\mathcal B) = \int_{\mathcal B} |g_j(\alpha_\theta)|^{A_\theta} |h_k(\alpha_d)|^{A_d} |K_\pm(\alpha_\theta)| \text d \tuplealpha. 
		\end{split}			 
	\end{equation*}		
For $ \omega_i \in (0,1)$ with $ \omega_1 + \cdots + \omega_4 =1 $ a formal application of H\"{o}lder's inequality reveals 	
	\begin{equation} \label{eq3.2.13}
		\begin{split}	
			\int_{\mathcal B} & |f_i(\alpha_d, \alpha_\theta)|^{\ell^\prime} |g_j(\alpha_\theta)|^{m} |h_k(\alpha_d)|^{n} |K_\pm(\alpha_\theta)| \text d \tuplealpha \ll \\[10pt]
			& \ll \left( \Xi_{f_i}  (\mathcal B) \right)^{\omega_1}	\left( \Xi_{f_i,g_j}  (\mathcal B) \right)^{\omega_2} \left( \Xi_{f_i,h_k}  (\mathcal B) \right)^{\omega_3} \left( \Xi_{g_j,h_k}  (\mathcal B) \right)^{\omega_4}. 		
		\end{split}
	\end{equation}
Combining (\ref{eq3.2.13}) and (\ref{eq3.2.12}) yields
	\begin{equation} \label{eq3.2.14}
		\begin{split}	
			\int_{\mathcal B}  | \mathcal F (\tuplealpha) K_\pm(\alpha_\theta)| \text d \tuplealpha & \ll \left( \sup_{ (\alpha_d, \alpha_\theta) \in  \mathcal B} | f_i(\alpha_d, \alpha_\theta) | \right)^{\delta} \times  \\[10pt] 
			& \times \left( \Xi_{f_i}  (\mathcal B) \right)^{\omega_1}	\left( \Xi_{f_i,g_j}   (\mathcal B)\right)^{\omega_2} \left( \Xi_{f_i,h_k}  (\mathcal B) \right)^{\omega_3} \left( \Xi_{g_j,h_k}  (\mathcal B) \right)^{\omega_4}. 	
		\end{split}
	\end{equation}

The task now is to prove that there exist admissible values $ \omega_i$ such that the inequality (\ref{eq3.2.13}) is valid. The $ \omega_i \in (0,1)$ must satisfy the simultaneous linear equations
	\begin{equation*} 
		\begin{cases}
			A_\theta  \omega_1 + A_d \omega_2 + A_\theta \omega_3 = \ell^\prime \\[10pt]
			A_\theta \omega_2 + A_\theta \omega_4 = m \\[10pt]
			A_d \omega_3 + A_d \omega_4 = n \\[10pt]
			\omega_1 + \omega_2 + \omega_3 + \omega_4 = 1.
		\end{cases}
	\end{equation*} 
By the two equations in the middle we infer that
	\begin{equation*}
		\omega_2 = \omega_3 + \frac{m}{A_\theta} - \frac{n}{A_d}.
	\end{equation*}
Substituting $ \omega_2 + \omega_4  = m/ A_\theta$ into the last equation of the system yields
	\begin{equation*}
		\omega_1 = - \omega_3 + 1  - \frac{m}{A_\theta}.
	\end{equation*}
One can substitute into the first equation of the system the above values for $ \omega_2 $ and $ \omega_1.$ Hence
	\begin{equation*}
		\omega_3 = \frac{ s^\prime - A_\theta}{A_d} - \frac{m}{A_\theta}.
	\end{equation*}
Having determined a value for $ \omega_3$ one can solve for $ \omega_1, \omega_2$ and $ \omega_4$ to obtain
	\begin{equation} \label{eq3.2.15}
		\omega_1 = 1 - \frac{ s^\prime  - A_\theta}{A_d}, \hspace{0.3in} \omega_2 = \frac{ s^\prime - A_\theta}{A_d}  - \frac{n}{A_d}, \hspace{0.3in}  \omega_4 = \frac{m}{A_\theta} + \frac{n}{A_d} - \frac{ s^\prime - A_\theta}{A_d}.
	\end{equation}

We now have to ensure that $ \omega_i \in (0,1).$ Since $ \omega_1 + \omega_2 + \omega_3 + \omega_4 = 1 $ it suffices to ensure that $ \omega_i > 0.$ Solving the simultaneous inequalities $ \omega_i > 0 \hspace{0.05in} (1 \leq i \leq 4)$ yields
	\begin{equation*}
		\max \left\{ A_\theta + n, \hspace{0.04in}  \frac{A_d}{A_\theta} m + A_\theta \right\} \leq s^\prime \leq \min \left\{ A_\theta + A_d, \hspace{0.04in} A_\theta + \frac{A_d}{A_\theta}m + n \right\}.
	\end{equation*} 
Note that this is a legitimate constraint since we assume that $ 0 \leq m \leq A_\theta$ and $ 0 \leq n \leq A_d.$

Next, we deduce a constraint for $s.$ Recall from (\ref{eq3.2.11}) that $ s^\prime = s - \delta.$  Since we consider $ s$ to be a natural number, the preceding inequality about the range of $ s^\prime $ now delivers
	\begin{equation*}
		\left \lceil \delta + \max \left\{ A_\theta + n, \hspace{0.04in}  \frac{A_d}{A_\theta} m + A_\theta \right\} \right \rceil \leq s \leq \left \lfloor \delta + \min \left\{ A_\theta + A_d, \hspace{0.04in} A_\theta + \frac{A_d}{A_\theta}m + n \right\} \right \rfloor.
	\end{equation*}
For any $x, y \in \mathbb R$ one has
	\begin{equation*}
		\begin{split}
			&\lfloor x \rfloor + \lfloor y \rfloor \leq \lfloor x + y \rfloor \leq \lfloor x \rfloor + \lfloor y \rfloor + 1 \\[10pt]
			& \lceil x \rceil + \lceil y \rceil -1 \leq \lceil x+ y \rceil \leq \lceil x \rceil + \lceil y \rceil + 1.
		\end{split}
	\end{equation*}
Since $ 0 \leq \delta < 1/3 < 1$ one has
	\begin{equation*}
		\left \lfloor \delta + \min \left\{ A_\theta + A_d, \hspace{0.04in} A_\theta + \frac{A_d}{A_\theta}m + n \right\} \right \rfloor \geq \left \lfloor  \min \left\{ A_\theta + A_d, \hspace{0.04in} A_\theta + \frac{A_d}{A_\theta}m + n \right\} \right \rfloor + 1,
	\end{equation*}
and 
	\begin{equation*}
		\left \lceil \delta + \max \left\{ A_\theta + n, \hspace{0.04in}  \frac{A_d}{A_\theta} m + A_\theta \right\} \right \rceil  \leq \left \lceil  \max \left\{ A_\theta + n, \hspace{0.04in}  \frac{A_d}{A_\theta} m + A_\theta \right\} \right \rceil + 1.
	\end{equation*}
Hence one has
	\begin{equation*}
		\left \lceil  \max \left\{ A_\theta + n, \hspace{0.04in}  \frac{A_d}{A_\theta} m + A_\theta \right\} \right \rceil + 1 \leq s \leq  \left \lfloor \min  \left\{ A_\theta + A_d, \hspace{0.04in} A_\theta + \frac{A_d}{A_\theta}m + n \right\} \right \rfloor + 1,
	\end{equation*}	
which is precisely the range prescribed by the condition $(d)$ in the statement of Theorem \ref{thm3.1.2}. It is therefore clear that for such $s$ the inequality (\ref{eq3.2.13}) is valid. 

\section{Auxiliary mean value estimates} \label{auxiliary_estimates_section}

The aim of this section is to collect the necessary auxiliary estimates that we employ in the following sections. From now on, and for ease of notation, for each $ j \in \{1, \ldots, n, \theta\}$ we put
	\begin{equation} \label{eq3.3.1}
		\sigma_{t, j} (\tuplex) = \sum_{ i=1}^t (x_i^j - x_{t+i}^j).
	\end{equation}

	\begin{lemma} \label{lem3.3.1}
		Suppose that $ I \subset (0,\infty)$ is a finite interval. Let $ \delta $ be a given positive real number and define the number $ \Delta$ by the relation $ 2 \delta \Delta =1.$ We write $ V_t( I ; \delta)$ to denote the number of positive integer solutions $ x_i \in I$ of the inequality
			\begin{equation*}
				| \sigma_{t, \theta} (\tuplex) | < \delta.
			\end{equation*}
		Then one has
			\begin{equation*}
				\delta \int_{ - \Delta}^{ \Delta} \left| \sum_{x \in I}e(\alpha x^\theta) \right|^{2t} \normalfont \text d \alpha \ll  V_t(I; \delta ) \ll \delta \int_{ - \Delta}^{ \Delta} \left| \sum_{x \in I} e(\alpha x^\theta) \right|^{2t}  \normalfont \text d \alpha,
			\end{equation*}		
		with the implicit constants in the above estimate being independent from $I,\theta,$ and $ \delta.$
	\end{lemma}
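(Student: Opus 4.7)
The lemma compares the sharp integer count $V_t(I;\delta)$ with the smooth $2t$-th moment of $F(\alpha) := \sum_{x \in I} e(\alpha x^\theta)$ over the dual interval $[-\Delta, \Delta]$. The plan is the classical Fejér kernel duality: bounding $V_t$ above and below by integrals of $|F|^{2t}$ against nonnegative kernels whose Fourier transforms are controlled, and using orthogonality to convert these into mean values.

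For the upper bound $V_t(I;\delta) \ll \delta \int_{-\Delta}^{\Delta} |F|^{2t}\,\text d\alpha$, I would introduce the Fejér-type kernel
\begin{equation*}
K(\alpha) := \frac{\pi^2}{4\Delta} \max\bigl(0, 1 - |\alpha|/\Delta \bigr),
\end{equation*}
which is nonnegative, supported in $[-\Delta,\Delta]$, and bounded by $K(0) = \pi^2 \delta /2$ (using $2\delta\Delta = 1$). A direct computation gives
\begin{equation*}
\int_{-\infty}^{\infty} K(\alpha) e(\alpha \sigma)\,\text d\alpha = \frac{\pi^2}{4}\sinc^2(\Delta \sigma),
\end{equation*}
which is nonnegative, and for $|\sigma| < \delta$ one has $|\Delta \sigma| < 1/2$, so the elementary estimate $\sinc(y) \geq 2/\pi$ on $|y| \leq 1/2$ yields $(\pi^2/4)\sinc^2(\Delta \sigma) \geq 1$. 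Expanding $|F(\alpha)|^{2t}$ by orthogonality and exchanging the finite sum with the integral therefore gives
\begin{equation*}
V_t(I;\delta) \leq \sum_\tuplex \int K(\alpha) e(\alpha \sigma_{t,\theta}(\tuplex))\,\text d\alpha = \int_{-\Delta}^{\Delta} K(\alpha) |F(\alpha)|^{2t}\,\text d\alpha \leq \frac{\pi^2 \delta}{2} \int_{-\Delta}^{\Delta} |F(\alpha)|^{2t}\,\text d\alpha.
\end{equation*}

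For the lower bound $V_t(I;\delta) \gg \delta \int_{-\Delta}^{\Delta} |F|^{2t}\,\text d\alpha$ I dualize, taking $\phi(\sigma) := \max(0, 1 - |\sigma|/\delta)$. This is nonnegative, supported in $[-\delta, \delta]$, and bounded above by $1$, with inverse-Fourier representation
\begin{equation*}
\phi(\sigma) = \int_{-\infty}^{\infty} \delta\, \sinc^2(\delta \alpha)\, e(\alpha \sigma)\,\text d\alpha,
\end{equation*}
where the integrand is nonnegative and satisfies $\delta\,\sinc^2(\delta \alpha) \geq 4\delta/\pi^2$ on $|\alpha| \leq \Delta$ (since $|\delta\alpha| \leq 1/2$ there). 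Since $\phi(\sigma) \leq \mathbf{1}_{|\sigma| < \delta}$, summing over $\tuplex$ and swapping sum and integral yields
\begin{equation*}
V_t(I;\delta) \geq \sum_\tuplex \phi\bigl(\sigma_{t,\theta}(\tuplex)\bigr) = \int_{-\infty}^{\infty} \delta\, \sinc^2(\delta\alpha) |F(\alpha)|^{2t}\,\text d\alpha \geq \frac{4\delta}{\pi^2} \int_{-\Delta}^{\Delta} |F(\alpha)|^{2t}\,\text d\alpha,
\end{equation*}
which is the claim.

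No step here presents a genuine obstacle; the argument is a textbook Fejér duality. The only mild subtlety is verifying that the constants $\pi^2/2$ and $4/\pi^2$ are truly independent of $I$, $\theta$, and $\delta$, but this is immediate since they come solely from the universal numerical estimate $\sinc(y) \geq 2/\pi$ on $|y| \leq 1/2$. The non-integrality of $\theta$ plays no role whatsoever: the key identity $\sum_\tuplex e(\alpha \sigma_{t,\theta}(\tuplex)) = |F(\alpha)|^{2t}$ is purely formal and holds for any real exponent.
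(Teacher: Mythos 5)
Your proof is correct, and it is essentially the same Fejér-kernel duality argument as the cited source (the paper only references \cite[Lemma 3.2]{poulias_ineq_frac}, but that lemma, as the proof of Lemma \ref{lem3.3.2} in this paper indicates, is itself built on the $\Lambda$--$\sinc^2$ Fourier pair and Jordan's inequality, exactly the tools you invoke). Both directions — the triangle kernel in $\alpha$-space for the upper bound and the triangle majorant in $\sigma$-space for the lower bound — are carried out correctly, and the constants $\pi^2/2$ and $4/\pi^2$ are indeed absolute, so nothing is missing.
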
	

	\begin{proof}
		This is a special case of \cite[Lemma 3.2]{poulias_ineq_frac} with $I_1 = I_2 = I$ in their notation. 
	\end{proof}

Next, we need a variant of the above lemma that allows one to bound from above the mixed mean values $ \Xi_{f_i, g_j} (\mathcal B) $ and $ \Xi_{f_i,h_k} (\mathcal B),$ by the number of solutions of the corresponding underlying system. Let $\kappa$ be a positive real number. We write $ Z_1(P)$ to denote the number of integer solutions of the system 
	\begin{equation*}
		\begin{cases}	
			\displaystyle  \left| \lambda_i \sigma_{\frac{A_d}{2}, \theta} (\tuplex) + \mu_j \sigma_{\frac{A_\theta}{2}, \theta} (\tupley) \right| < \frac{1}{2 \kappa} \\[15pt]
			\displaystyle  a_i \sigma_{\frac{A_d}{2}, d} (\tuplex) =0,
		\end{cases}
	\end{equation*}
with $ \frac{1}{2} x_i^\star P < \tuplex \leq 2 x_i^\star P $ and $ \frac{1}{2} y_i^\star P <\tupley \leq 2 y_i^\star P.$ Similarly, we write $Z_2 (P)$ to denote the number of integer solutions of the system
	\begin{equation*}
		\begin{cases}	
			\displaystyle  \left|  \lambda_i \sigma_{\frac{A_\theta}{2}, \theta} (\tuplex) \right| < \frac{1}{2 \kappa} \\[15pt]
			\displaystyle a_i \sigma_{\frac{A_\theta}{2}, d} (\tuplex) + b_k \sigma_{\frac{A_d}{2}, d} (\tuplez) =0,
		\end{cases}
	\end{equation*}
with $ \frac{1}{2} x_i^\star P < \tuplex \leq 2 x_i^\star P $ and $ \frac{1}{2} z_i^\star P <\tuplez \leq 2 z_i^\star P.$

\begin{lemma} \label{lem3.3.2}
	Let $ \kappa $ be a positive real number and write $ \mathcal B = [-1,1] \times  [- \kappa, \kappa] .$ Then, for each index $i, j$ and $k$ one has
	\begin{itemize}
		\item[(i)] $ \displaystyle \Xi_{f_i, g_j}  (\mathcal B) \ll \kappa Z_1(P) $ ;
		\item[(ii)] $ \displaystyle \Xi_{f_i, h_k}  (\mathcal B) \ll \kappa Z_2(P) .$
	\end{itemize}
	The implicit constants do not depend on $ \kappa.$
\end{lemma}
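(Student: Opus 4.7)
The plan is to prove both bounds by expanding the integrands as sums of exponentials over tuples, then carrying out the $\alpha_d$-integration by orthogonality (which extracts the integer equation) and the $\alpha_\theta$-integration by a Fej\'er-kernel majorant (which extracts the inequality constraint in the definition of $Z_1$ or $Z_2$).

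For part (i), I begin by discarding the kernel with the pointwise bound $|K_\pm(\alpha_\theta)| \ll_\tau 1$ from (\ref{eq3.2.2}). To handle the truncation to $[-\kappa, \kappa]$ with a non-negative weight, I use the Fej\'er-type majorant
\begin{equation*}
\mathbf{1}_{[-\kappa, \kappa]}(\alpha_\theta) \leq \frac{\pi^2}{4} \left( \frac{\sin(\pi \alpha_\theta/(2\kappa))}{\pi \alpha_\theta/(2\kappa)} \right)^{\!2},
\end{equation*}
valid because the right-hand side exceeds $1$ whenever $|\alpha_\theta/(2\kappa)| \leq 1/2$. Together with the non-negativity of $|f_i|^{A_d}|g_j|^{A_\theta}$, this reduces the problem to bounding
\begin{equation*}
\int_{-1}^{1} \int_{\mathbb R} \left( \frac{\sin(\pi \alpha_\theta/(2\kappa))}{\pi \alpha_\theta/(2\kappa)} \right)^{\!2} |f_i(\alpha_d, \alpha_\theta)|^{A_d} |g_j(\alpha_\theta)|^{A_\theta} \, d\alpha_\theta \, d\alpha_d.
\end{equation*}

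Next, I expand
\begin{equation*}
|f_i|^{A_d} = \sum_{\tuplex} e\bigl( a_i \alpha_d \sigma_{A_d/2, d}(\tuplex) + \lambda_i \alpha_\theta \sigma_{A_d/2, \theta}(\tuplex) \bigr), \qquad |g_j|^{A_\theta} = \sum_{\tupley} e\bigl( \mu_j \alpha_\theta \sigma_{A_\theta/2, \theta}(\tupley) \bigr),
\end{equation*}
swap the order of summation and integration, and dispatch the two integrals. The $\alpha_d$-integration acts only on $e(a_i \alpha_d \sigma_{A_d/2, d}(\tuplex))$, whose argument is an integer (since $a_i \in \mathbb Z$ and the power sum is integral); as $a_i \neq 0$, orthogonality gives $\int_{-1}^{1} e(\cdot)\,d\alpha_d = 2 \, \mathbf{1}_{\sigma_{A_d/2, d}(\tuplex) = 0}$. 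For the $\alpha_\theta$-integration, Fourier inversion applied to the Fej\'er kernel yields
\begin{equation*}
\int_{\mathbb R} \left( \frac{\sin(\pi \alpha_\theta/(2\kappa))}{\pi \alpha_\theta/(2\kappa)} \right)^{\!2} e(\alpha_\theta \tilde M) \, d\alpha_\theta = 2\kappa \max\bigl(0, 1 - 2\kappa |\tilde M|\bigr),
\end{equation*}
where $\tilde M = \lambda_i \sigma_{A_d/2, \theta}(\tuplex) + \mu_j \sigma_{A_\theta/2, \theta}(\tupley)$. Bounding this triangle by $\mathbf{1}_{|\tilde M| < 1/(2\kappa)}$ and summing over those $(\tuplex, \tupley)$ with $\sigma_{A_d/2, d}(\tuplex) = 0$ gives exactly $\Xi_{f_i, g_j}(\mathcal B) \ll_\tau \kappa Z_1(P)$.

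Part (ii) runs on the same template with the roles of $g_j$ and $h_k$ swapped. Now the $\alpha_d$-integration extracts the combined integer equation $a_i \sigma_{A_\theta/2, d}(\tuplex) + b_k \sigma_{A_d/2, d}(\tuplez) = 0$, while the Fej\'er majorant in $\alpha_\theta$ captures the constraint $|\lambda_i \sigma_{A_\theta/2, \theta}(\tuplex)| < 1/(2\kappa)$, matching the system defining $Z_2(P)$. I do not expect any serious obstacle; the main point requiring care is verifying the majorant inequality with a usable constant and tracking that all constants depend only on $\tau$ (and the fixed parameters) and not on $\kappa$, which the argument delivers since the kernel bound and the Fej\'er majorant are both $\kappa$-uniform.
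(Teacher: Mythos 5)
Your proof is correct and is essentially the paper's argument run in reverse: both discard the kernel via $|K_\pm|\ll_\tau 1$ and then exploit the Fej\'er-kernel Fourier pair $\Lambda(x)=\int e(x\xi)\,\sinc^2(\xi)\,\text{d}\xi$ (equivalently, Jordan's inequality $\sinc^2(x)>4/\pi^2$ on $|x|<1/2$) to trade the $\alpha_\theta$-integral over $[-\kappa,\kappa]$ for the inequality constraint $<1/(2\kappa)$ appearing in the definition of $Z_1(P)$, resp.\ $Z_2(P)$. The one cosmetic variation is that you dispatch the $\alpha_d$-integral by exact orthogonality over $[-1,1]$, legitimate since the frequencies $a_i\sigma_{A_d/2,d}(\tuplex)$ are integers, whereas the paper threads a second $\sinc^2(\alpha_d/2)$ weight through both variables symmetrically and applies Jordan's inequality to the product at the end; both routes give the same $\kappa$-uniform bound.
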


\begin{proof}
	We give the proof only of estimate $(i).$ One can establish estimate $(ii)$ in a similar fashion.  As in \cite[Lemma 3.2]{poulias_ineq_frac}, the proof is inspired by \cite[Lemma 2.1]{watt-exp-sums-riemann-zeta-fnct2}.  
	
	Fix indices $i$ and $j.$ For ease of notation we put 
		\begin{equation*} 
			p (\tuplex, \tupley) = \lambda_i \sigma_{\frac{A_d}{2}, \theta} (\tuplex) + \mu_j \sigma_{\frac{A_\theta}{2}, \theta} (\tupley) \hspace{0.5in} \text{and} \hspace{0.5in} q (\tuplex) =  a_i \sigma_{\frac{A_d}{2}, d} (\tuplex).
		\end{equation*} 
	Then, $Z_1(P)$ is equivalently given by the number of integer solutions of the system
		\begin{equation*}
			\begin{cases}
				\displaystyle | p (\tuplex, \tupley) |  < \frac{1}{2 \kappa} \\[10pt]
				\displaystyle | q (\tuplex) | < \frac{1}{2}
			\end{cases}
		\end{equation*}
	with  $ \frac{1}{2} x_i^\star P < \tuplex \leq 2 x_i^\star P $ and $ \frac{1}{2} y_i^\star P <\tupley \leq 2 y_i^\star P.$
	
	Define the function
		\begin{equation*}
			 \text{sinc} (x) = 
				\begin{cases}
				\displaystyle  \frac{\sin (\pi x) }{ \pi x}, & \hspace{0.1in} \text{when} \hspace{0.1in} x \neq 0, \\[10pt]
				1, & \hspace{0.1in}  \text{when} \hspace{0.1in} x=0.
				\end{cases}
		\end{equation*}
	By \cite{davenport-heilbr-ineq} we know that for each $x, \xi \in \mathbb R$ one has
		\begin{equation*} \label{chapter4_sinc_triangl_fourier_pair}
			\Lambda (x) = \int_{- \infty}^\infty e (x \xi) \text{sinc}^2 (\xi ) \text d \xi,
		\end{equation*}							
	where for $x \in \mathbb R$  we write $ \Lambda (x) = \max \{0, 1 - |x| \}.$ Note that one has $ 0 \leq \Lambda(x) \leq 1.$ So, for each solution counted by $ Z_1 (P)$ one has
	$ 0 < \Lambda (2\kappa p(\tuplex, \tupley)) < 1 $ and  $ 0 < \Lambda (2q(\tuplex)) < 1.$  
	
	By the above considerations and taking the sum over the tuples $\tuplex, \tupley$ with  $ \frac{1}{2} x_i^\star P < \tuplex \leq 2 x_i^\star P $ and $ \frac{1}{2} y_i^\star P <\tupley \leq 2 y_i^\star P,$ we infer that
		\begin{equation*}
			\begin{split}
				\displaystyle Z_1 (P) & \geq \sum_{ \tuplex, \tupley} \Lambda (2 \kappa p (\tuplex, \tupley)) \Lambda ( 2 q (\tuplex)) \\[10pt]
				& = \sum_{ \tuplex, \tupley} \int_{- \infty}^{\infty} \int_{- \infty}^{\infty} e \left( u_1  \kappa p (\tuplex, \tupley) + u_2 2 q(\tuplex) \right) \sinc^2 (u_1) \sinc^2 (u_2) \text d \tupleu \\[10pt]
				& = \frac{1}{4 \kappa}  \sum_{ \tuplex, \tupley} \int_{- \infty}^{\infty} \int_{- \infty}^{\infty} e \left( \alpha_\theta p (\tuplex, \tupley) + \alpha_d q(\tuplex) \right) \sinc^2 \left( \frac{1}{2 \kappa} \alpha_\theta \right) \sinc^2 \left( \frac{1}{2} \alpha_d \right) \text d \tuplealpha,
			\end{split}
		\end{equation*}
	where in the last step we applied a change of variables under the transformation
		\begin{equation*}
			\begin{pmatrix}
				u_1 \\
				u_2
			\end{pmatrix}
			=
			\begin{pmatrix}
				\frac{1}{2 \kappa} & 0 \\
				0 & \frac{1}{2} 
				\end{pmatrix}
				\begin{pmatrix}
				\alpha_\theta \\
				\alpha_d
			\end{pmatrix}.
		\end{equation*}
	Because we have a finite sum and since the integral is absolutely convergent, one can change the order. Thus, by the above inequality we obtain
		\begin{equation} \label{eq3.3.2}
			Z_1 (P) \geq  \frac{1}{4 \kappa}  \int_{- \infty}^{\infty} \int_{- \infty}^{\infty} |f_i(\alpha_d, \alpha_\theta)|^{A_d} |g_j(\alpha_d)|^{A_\theta}  \sinc^2 \left( \frac{1}{2 \kappa} \alpha_\theta \right) \sinc^2 \left( \frac{1}{2} \alpha_d \right) \text d \tuplealpha.
		\end{equation}
	
	Next, we use Jordan's inequality, which states that for $ 0 < x \leq \frac{\pi}{2}$ one has
		\begin{equation*}
			\frac{2}{ \pi} \leq \frac{\sin x}{x} < 1.
		\end{equation*}
	For a proof of this inequality see \cite[p. 33]{mitrinovic_book}. One then has $ \sinc^2 (x) > 4/ \pi^2$ for $ | x| < \frac{1}{2}.$ Thus, for $ | \alpha_\theta| <  \kappa$ and $ | \alpha_d| <  1 $ one has 
		\begin{equation*} 
			\sinc^2 \left( \frac{1}{2 \kappa} \alpha_\theta \right), \hspace{0.1in} \sinc^2 \left( \frac{1}{2} \alpha_d \right) > 4/ \pi^2 .
		\end{equation*} 
	Hence, the inequality (\ref{eq3.3.2}) now delivers 
		\begin{equation*}
			Z_1 (P) \gg \frac{1}{\kappa} \int_{- \kappa }^{ \kappa} \int_{-1}^1  |f_i(\alpha_d, \alpha_\theta)|^{A_d} |g_j(\alpha_\theta)|^{A_\theta} \text d \tuplealpha,
		\end{equation*}
	which completes the proof.
\end{proof}

Temporarily we put $ n = \lfloor \theta \rfloor$ for the integer part of $ \theta.$ For a tuple $ \tuplealpha = (\alpha_1, \ldots, \alpha_n, \alpha_\theta) \in \mathbb R^{n+1}$ we put $ T(\tuplealpha) = T( \tuplealpha ; P ),$ where
	\begin{equation} \label{4_Texp_def}
		T (\tuplealpha ; P) = \sum_{ 1 \leq x \leq P} e (\alpha_1 x + \cdots + \alpha_n x^n + \alpha_\theta x^\theta).
	\end{equation}
We need the following mean value estimate.
	
	\begin{theorem} \label{thm3.3.3}
		Let $ \kappa \geq 1 $ be a real number and suppose that $ t \geq A_\theta / 2 $ is a natural number. Then, for any fixed $ \epsilon > 0$ one has
			\begin{equation*}
				\int_{ - \kappa}^{\kappa} \int_{[0,1)^n} \left| T(\tuplealpha) \right|^{2t} \normalfont \text d \tuplealpha \ll_{t, \theta, \epsilon} \kappa P^{2t - \frac{1}{2} n(n+1) - \theta + \epsilon}.
			\end{equation*}
		The implicit constant does not depend on $\kappa.$ Furthermore, for $t > A_\theta /2 $ one can take $ \epsilon  =0 .$	
	\end{theorem}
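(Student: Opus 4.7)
The strategy is to convert the integral into a lattice point count and then invoke the essentially optimal mean value estimate for approximately translation--dilation invariant Vinogradov-type systems established in \cite{poulias_approx_TDI_system}, of which Theorem \ref{thm3.3.3} is a direct special case.

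\emph{Reduction to a solution count.} Expanding $|T(\tuplealpha)|^{2t}$ diagonally as a sum over $\tuplex = (x_1, \ldots, x_{2t}) \in [1, P]^{2t}$ produces the phase
\begin{equation*}
e\bigl( \alpha_1 \sigma_{t,1}(\tuplex) + \cdots + \alpha_n \sigma_{t,n}(\tuplex) + \alpha_\theta \sigma_{t,\theta}(\tuplex) \bigr).
\end{equation*}
Integrating over $[0,1)^n$ in the variables $\alpha_1, \ldots, \alpha_n$ by orthogonality restricts the summation to those tuples satisfying $\sigma_{t,j}(\tuplex) = 0$ for $1 \leq j \leq n$. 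The remaining integral over $\alpha_\theta \in [-\kappa, \kappa]$ yields a sinc-factor $2\kappa\,\sinc\bigl(2\kappa\,\sigma_{t,\theta}(\tuplex)\bigr)$, so a sinc-kernel majorant argument in the $\alpha_\theta$ direction, modelled on the proof of Lemma \ref{lem3.3.2}, gives
\begin{equation*}
\int_{-\kappa}^{\kappa} \int_{[0,1)^n} |T(\tuplealpha)|^{2t} \text d \tuplealpha \ll \kappa\, N_t\bigl(P;\tfrac{1}{2\kappa}\bigr),
\end{equation*}
where $N_t(P;\delta)$ denotes the number of tuples $\tuplex \in [1,P]^{2t}$ that satisfy $\sigma_{t,j}(\tuplex) = 0$ for $1 \leq j \leq n$ together with the inequality $|\sigma_{t,\theta}(\tuplex)| < \delta$.

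\emph{Mean value for the mixed Vinogradov system.} It therefore suffices to establish
\begin{equation*}
N_t(P;\delta) \ll_{t,\theta,\epsilon} P^{2t - \tfrac{1}{2}n(n+1) - \theta + \epsilon}
\end{equation*}
uniformly in $\delta \in (0, 1/2]$, provided $t \geq A_\theta/2$. The underlying system is the prototypical approximately translation--dilation invariant Vinogradov system associated with the polynomial family $(x, x^2, \ldots, x^n, x^\theta)$, in which the highest-degree equation is weakened to an inequality, and the desired bound is a direct specialisation of \cite[Theorem 1.2]{poulias_approx_TDI_system}. The threshold $t \geq A_\theta/2 = \tfrac{1}{2}(\lfloor 2\theta \rfloor + 1)(\lfloor 2\theta \rfloor + 2)$ matches the Vinogradov critical level at effective degree $\lfloor 2\theta \rfloor + 1$, namely the smallest integer strictly exceeding $2\theta$, while the additional saving of $P^{\theta}$ beyond the classical Vinogradov bound $P^{2t - n(n+1)/2 + \epsilon}$ is extracted from the fractional inequality $|\sigma_{t,\theta}(\tuplex)| < \delta$. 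For $t > A_\theta/2$, a standard interpolation between the critical mean value and the trivial pointwise bound $|T(\tuplealpha)| \leq P$ removes the $P^\epsilon$ factor.

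\emph{Main obstacle.} The entire technical content resides in the second step, which is imported as a black box from \cite{poulias_approx_TDI_system}. The delicate point there is the adaptation of the $\ell^2$-decoupling or efficient congruencing machinery, which is built around exact translation--dilation invariance, to the situation in which one of the defining equations is replaced by an inequality involving a non-integral exponent. Within the present paper no further ingredient beyond the orthogonality-plus-sinc reduction of Step 1 is needed.
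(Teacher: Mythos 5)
The paper's proof of Theorem \ref{thm3.3.3} consists of a single sentence: it is quoted directly as a special case of \cite[Theorem 1.2]{poulias_approx_TDI_system} with $\phi(x) = x^\theta$, i.e.\ the cited result is already stated as a mean value bound for the exponential sum in exactly the form needed, including the stronger $\epsilon = 0$ assertion for $t > A_\theta/2$. Your proposal correctly identifies \cite{poulias_approx_TDI_system} as the engine behind the result, but the reduction you interpose between the integral and the citation is both unnecessary and, as written, not quite sound.

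Two specific concerns with the reduction. First, after imposing the orthogonality constraints and integrating $\alpha_\theta$ over $[-\kappa, \kappa]$, the resulting factor $2\kappa \,\sinc(2\kappa\,\sigma_{t,\theta}(\tuplex))$ is \emph{signed}, so it does not by itself bound the sum by a count of solutions with $|\sigma_{t,\theta}(\tuplex)| < 1/(2\kappa)$; the tails $|\sigma_{t,\theta}(\tuplex)| > 1/(2\kappa)$ contribute and would require a separate dyadic analysis. The patch is to not integrate against $\mathbf{1}_{[-\kappa,\kappa]}$ at all, but instead to majorise $\mathbf{1}_{[-\kappa,\kappa]}(\alpha_\theta)$ by a constant multiple of $\sinc^2(\alpha_\theta / (2\kappa))$, whose Fourier transform is nonnegative and compactly supported in $[-1/(2\kappa),1/(2\kappa)]$; that is precisely the mechanism in Lemma \ref{lem3.3.2}, and you do gesture at it, but the explicit appeal to the un-squared $\sinc$ factor is a genuine gap in the argument as presented. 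Second, your claim that for $t > A_\theta/2$ ``a standard interpolation between the critical mean value and the trivial pointwise bound $|T| \leq P$ removes the $P^\epsilon$ factor'' is false: Hölder against the $t_0 = A_\theta/2$ bound gives $P^{2(t-t_0)} \cdot \kappa P^{2t_0 - \frac12 n(n+1) - \theta + \epsilon} = \kappa P^{2t - \frac12 n(n+1) - \theta + \epsilon}$, still with $\epsilon$. Removing $\epsilon$ above the critical level requires a major/minor arc dissection or a similar argument, and in the paper this is simply inherited from the cited theorem. Since the cited theorem already gives the integral bound directly (in both the $\epsilon > 0$ and $\epsilon = 0$ regimes), none of your intermediate steps are needed, and where they deviate from the citation they introduce errors.
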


	\begin{proof}
		This is a special case of \cite[Theorem 1.2]{poulias_approx_TDI_system} with $ \phi(x) = x^\theta.$
	\end{proof}

Next, we obtain essentially optimal mean value estimate for the exponential sums $f,g$ and $h.$

	\begin{lemma} \label{lem3.3.4}
		Let $ \kappa \geq 1$ be a real number. Then the following are valid.
			\begin{itemize}
				\item [(i)] Suppose that $ t \geq A_\theta / 2 $ is a natural number. Then, for any fixed $ \epsilon > 0$ one has
					\begin{equation*}
						\int_{ - \kappa}^{\kappa} \int_0^1 \left| f(\alpha_d, \alpha_\theta) \right|^{2t} \normalfont \text d \tuplealpha \ll_{t,\theta, \epsilon} \kappa P^{2t - (\theta+d) + \epsilon}.
					\end{equation*}	
				\item [(ii)]  Suppose that $ t \geq A_\theta /2 $ is a natural number. Then, for any fixed $ \epsilon > 0$ one has
					\begin{equation*}
						\int_{- \kappa}^{ \kappa} \left| g(\alpha_\theta) \right|^{2t} \normalfont \text d \alpha_\theta \ll_{t,\theta, \epsilon} \kappa P^{2t -  \theta + \epsilon}.
					\end{equation*}
				\item [(iii)] Suppose that $ t \geq A_d / 2 $ is a natural number. Then, for any fixed $ \epsilon > 0$ one has
					\begin{equation*}
						\int_0^1 \left| h(\alpha_d) \right|^{2t} \normalfont \text d \alpha_d \ll_{t,d, \epsilon} P^{2t - d + \epsilon}.
					\end{equation*}
			\end{itemize}
		The implicit constants do not depend on $\kappa.$ Furthermore, for $ t > A_\theta /2$ in $ (i)$ and $(ii),$ and for $ t > A_d /2$ in $(iii),$ one can take $ \epsilon =0.$ 
	\end{lemma}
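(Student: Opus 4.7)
The lemma has three parts of increasing technical difficulty; my plan treats them in the order (iii), (ii), (i), with each part relying on the previous framework together with Theorem \ref{thm3.3.3} and Lemma \ref{lem3.3.1}.

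Part (iii) is immediate from the classical Hua inequality applied to the degree-$d$ polynomial $x^d$. Since $t\geq d^2/2=A_d/2$, one obtains $\int_0^1 |h(\alpha_d)|^{2t}\,\mathrm{d}\alpha_d \ll P^{2t-d+\epsilon}$ directly, with $\epsilon=0$ in the strict range $t>A_d/2$ by the sharp form. No further input is needed.

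For part (ii), my plan is to apply Lemma \ref{lem3.3.1} with $I=[1,P]$ and $\delta = 1/(2\kappa)$ (so that $\Delta=\kappa$), identifying the integral as
\[
\int_{-\kappa}^{\kappa} |g(\alpha_\theta)|^{2t}\,\mathrm{d}\alpha_\theta \;\asymp\; 2\kappa\,V_t\bigl([1,P];\,1/(2\kappa)\bigr).
\]
It thus suffices to establish $V_t([1,P];1/(2\kappa))\ll P^{2t-\theta+\epsilon}$, uniformly in $\kappa\geq 1$. With $n=\lfloor\theta\rfloor$, I decompose $V_t$ according to the Vinogradov slacks $s_j := \sigma_{t,j}(\tuplex)$ for $j=1,\ldots,n$, writing $V_t = \sum_{\mathbf{s}\in\mathbb{Z}^n} V_t^{(\mathbf{s})}$ where $V_t^{(\mathbf{s})}$ adds the constraints $\sigma_{t,j}(\tuplex)=s_j$. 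The effective slacks satisfy $|s_j|\leq tP^j$, contributing $O(P^{n(n+1)/2})$ vectors. A Cauchy--Schwarz argument on the representation function $r(\mathbf{v}) = \#\{\tuplex : \sum_i x_i^j = v_j,\ 1\leq j\leq n\}$ yields $V_t^{(\mathbf{s})}\leq V_t^{(\mathbf{0})}$, and the zero-slack fiber $V_t^{(\mathbf{0})}$ is exactly the count controlled by Theorem \ref{thm3.3.3} after orthogonality in $\alpha_1,\ldots,\alpha_n$ and Lemma \ref{lem3.3.1} in $\alpha_\theta$, giving $V_t^{(\mathbf{0})}\ll P^{2t - n(n+1)/2 - \theta + \epsilon}$ for $t\geq A_\theta/2$. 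Multiplying the two bounds produces the required $P^{2t-\theta+\epsilon}$. Part (i) follows the same template: I expand $|f(\alpha_d,\alpha_\theta)|^{2t}$, integrate in $\alpha_d\in[0,1)$ by orthogonality to enforce $\sigma_{t,d}(\tuplex)=0$, and then apply Lemma \ref{lem3.3.1} in $\alpha_\theta\in[-\kappa,\kappa]$ to reduce to $2\kappa$ times a count $W_t(P;1/(2\kappa))$ of tuples satisfying both $\sigma_{t,d}=0$ and $|\sigma_{t,\theta}|<1/(2\kappa)$. The slack decomposition is the same, but now with $s_d=0$ fixed, leaving $O(P^{n(n+1)/2 - d})$ free slacks; each fiber is bounded by $V_t^{(\mathbf{0})}$ via Theorem \ref{thm3.3.3}, and the product gives $\kappa P^{2t-(\theta+d)+\epsilon}$.

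The main obstacle is to justify the translation bound $V_t^{(\mathbf{s})}\leq V_t^{(\mathbf{0})}$ in the presence of the non-integral $\theta$-constraint. In the pure Vinogradov setting it is a one-line Cauchy--Schwarz bound: $V_t^{(\mathbf{s})} = \sum_{\mathbf{v}} r(\mathbf{v})r(\mathbf{v}-\mathbf{s}) \leq \|r\|_2^2 = V_t^{(\mathbf{0})}$. The constraint $|\sigma_{t,\theta}|<1/(2\kappa)$ does not factor through $r$, so I would refine the representation function to track the $\theta$-coordinate in windows of length $1/(2\kappa)$ and apply Cauchy--Schwarz to this enlarged data, or, more cleanly, replace the hard indicator by the Fejer-type kernel $\Lambda(2\kappa\,\cdot)$ used in the proof of Lemma \ref{lem3.3.2}, thereby reducing the inequality to a manifestly non-negative convolution statement that can be handled uniformly in $\mathbf{s}$.
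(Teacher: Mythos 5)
Part (iii) contains a genuine error. Classical Hua's inequality yields $\int_0^1 |h(\alpha_d)|^{2t}\,\mathrm{d}\alpha_d \ll P^{2t-d+\epsilon}$ only once $2t \geq 2^d$, whereas your hypothesis is $t \geq A_d/2 = d^2/2$; for $d \geq 5$ one has $d^2/2 < 2^{d-1}$, so at the threshold $t = \lceil d^2/2\rceil$ Hua gives nothing. The paper instead invokes \cite[Corollary 14.7]{wooley_NEC}, which delivers the mean-value bound for all $2t \geq s_0(d)$; since $s_0(d) \leq d^2 = A_d$, the hypothesis $t \geq A_d/2$ suffices. Your final formula is right, but the tool you cite for it is too weak when $d\geq 5$.

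For parts (i) and (ii), the slack decomposition is the right idea, and you correctly identify the real obstruction: the Cauchy--Schwarz translation bound $V_t^{(\mathbf{s})} \leq V_t^{(\mathbf{0})}$ breaks because the hard cutoff on $\sigma_{t,\theta}$ is not positive-definite. Your Fej\'er-kernel repair would indeed close the gap, but it is not carried out, and the adaptation of Lemma~\ref{lem3.3.1} to the restricted count $\{\sigma_{t,d}=0\}$ also needs justification. The paper avoids both issues by never working on the count directly: after introducing slacks $h_j$ for $j \neq d$ through the orthogonality kernel $\delta(\tuplex,\tupleh)=\prod_{j\neq d}\int_0^1 e(\beta_j(\sigma_{t,j}(\tuplex)-h_j))\,\mathrm{d}\beta_j$, it records the identity
\[
\int_{-\kappa}^{\kappa}\int_{[0,1)^n} |T(\tuplebeta,\alpha_d,\alpha_\theta)|^{2t}\, e\Bigl(-\textstyle\sum_{j\neq d}\beta_j h_j\Bigr)\,\mathrm{d}\tuplebeta
= \sum_{1\leq\tuplex\leq P}\delta(\tuplex,\tupleh)\int_{-\kappa}^{\kappa}\int_0^1 e\bigl(\alpha_d\sigma_{t,d}(\tuplex)+\alpha_\theta\sigma_{t,\theta}(\tuplex)\bigr)\,\mathrm{d}\alpha_d\,\mathrm{d}\alpha_\theta,
\]
sums over the $O(P^{n(n+1)/2-d})$ admissible $\tupleh$, and then applies the triangle inequality on the left to discard the oscillating factor $e(-\sum\beta_jh_j)$. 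This gives uniformity in $\tupleh$ for free on the Fourier side; it is exactly the positivity your Fej\'er kernel is designed to recreate, obtained without ever stating a translation inequality. Both routes ultimately multiply the fiber bound from Theorem~\ref{thm3.3.3} by the slack count, but the paper's form is shorter and leaves no intermediate lemma to verify.
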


	\begin{proof}
		We begin with the estimate in $(iii).$ This follows from \cite[Corollary 14.7]{wooley_NEC} since 
		\\$ A_d \geq s_0(d),$ where $s_0(d)$ is defined as
			\begin{equation*}
				s_0(d) = d (d-1) + \min_{ 0 \leq m < d} \frac{ 2d + m (m-1)}{ m+1}.
			\end{equation*} 
		 
		The proof of the estimate in $(ii)$ can be found in \cite[Theorem 1.4]{poulias_ineq_frac}. Alternative, one can apply an argument similar to the one we present below for proving $(i).$
		
		We now come to the estimate in $(i).$ Temporarily we write $ n = \lfloor \theta \rfloor.$ Keep in mind that we suppose that $ \theta > d+ 1$ and so one has $ d < n.$ In order to prove the estimate in $(i)$ we apply an average process as in \cite[Theorem 2.1]{wooley_IMRN}. For each $ 1 \leq j \leq n$ with $ j \neq d $ and for a tuple $ \tupleh = (h_1, \ldots, h_{d-1}, h_{d+1}, \ldots, h_n) \in \mathbb Z^{n-1}$ we put
			\begin{equation*}
				\delta(\tuplex, \tupleh) = \prod_{ \substack {j=1 \\ j \neq d}}^n \int_0^1 e \left( \beta_j \left( \sigma_{t,j} (\tuplex) - h_j \right) \right) \text d \beta_j,
			\end{equation*}
		where recall from (\ref{eq3.3.1}) the definition of $ \sigma_{t,j} (\tuplex).$ Let us rewrite the exponential sum $T(\tuplealpha )$ defined in (\ref{4_Texp_def}) as
			\begin{equation*}
				T( \tuplebeta , \alpha_d, \alpha_\theta) = \sum_{1 \leq x\leq P} e (\beta_1 x + \cdots + \beta_{d-1} x^{d-1}  + \alpha_d x^d + \beta_{d+1} x^{d+1} + \cdots + \alpha_\theta x^\theta).
			\end{equation*}
		Note that
			\begin{equation} \label{eq3.3.4}
				\begin{split}	
					\int_{-\kappa}^\kappa \int_{[0,1)^n} & \left| T(\tuplebeta, \alpha_d, \alpha_\theta) \right|^{2t}  e \left( - \sum_{ \substack {j=1 \\ j \neq d}}^n \beta_j h_j \right) \text d \tuplebeta = \\[10pt]
					& = \sum_{ 1 \leq \tuplex \leq P} \delta(\tuplex, \tupleh)  \int_{- \kappa}^\kappa \int_0^1 e \left(  \alpha_d \sigma_{t,d} (\tuplex) + \alpha_\theta \sigma_{t,\theta} (\tuplex) \right) \text d \alpha_d \text d \alpha_\theta.
				\end{split}
			\end{equation}
			
		By orthogonality one has
			\begin{equation*}
				\int_0^1 e \left( \beta_j \left( \sigma_{t,j} (\tuplex) - h_j \right) \right) \text d \beta_j =
					\begin{cases}
						1, & \hspace{0.1in} \text{when} \hspace{0.1in}  \sigma_{t,j} (\tuplex) = h_j,  \\[10pt]
						0, & \hspace{0.1in}  \text{when} \hspace{0.1in} \sigma_{t,j} (\tuplex) \neq  h_j.
					\end{cases}
			\end{equation*}
		It is apparent that for each fixed choice of $ 1 \leq \tuplex \leq P$ there is precisely one possible value for the tuple $ \tupleh \in \mathbb Z^{n-1}.$ Moreover, for each $ j$ and for $ 1 \leq \tuplex \leq P$ one has $ \left| \sigma_{t,j} (\tuplex)  \right| \leq t P^j.$ Hence
			\begin{equation} \label{eq3.3.5}
				\sum_{ | h_1| \leq t P} \cdots \sum_{ | h_{d-1}| \leq tP^{d-1} } \sum_{ | h_{d+1}| \leq tP^{d+1} } \cdots \sum_{ | h_n| \leq tP^n } \delta(\tuplex, \tupleh) =1.
			\end{equation}
		One may return to (\ref{eq3.3.4}) and sum over tuples $ \tupleh$ satisfying $ |h_j| \leq tP^j$ for each $1 \leq j \leq n $ with $ j \neq d.$ Thus we obtain
			\begin{equation*}
				\begin{split}
					\sum_{ \tupleh} \int_{-\kappa}^\kappa \int_{[0,1)^n} & \left| T(\tuplebeta, \alpha_d, \alpha_\theta) \right|^{2t} e \left( - \sum_{ \substack {j=1 \\ j \neq d}}^n \beta_j h_j \right) \text d \tuplebeta  = \\[10pt]
					& = \sum_{ 1 \leq \tuplex \leq P} \left(\sum_{ \tupleh} \delta(\tuplex, \tupleh) \right)  \int_{- \kappa}^\kappa \int_0^1 e \left(  \alpha_d \sigma_{t,d} (\tuplex) + \alpha_\theta \sigma_{t,\theta} (\tuplex) \right) \text d \alpha_d \text d \alpha_\theta.
				\end{split}
			\end{equation*}
		Applying the triangle inequality and taking into account (\ref{eq3.3.5}) one has
			\begin{equation*}
				\begin{split}	
					P^{ \frac{1}{2} n(n+1) -d} \int_{-\kappa}^\kappa \int_{[0,1)^n} & \left| T(\tuplebeta, \alpha_d, \alpha_\theta) \right|^{2t} \text d \tuplebeta \geq \\[10pt]
					& \geq \sum_{1 \leq \tuplex \leq P} \int_{-\kappa}^{\kappa} \int_0^1 e \left(  \alpha_d \sigma_{t,d} (\tuplex) + \alpha_\theta \sigma_{t,\theta} (\tuplex) \right) \text d \alpha_d \text d \alpha_\theta.
				\end{split}
			\end{equation*}
		
		Note now that
			\begin{equation*}
				\sum_{1 \leq \tuplex \leq P} \int_{-\kappa}^{\kappa} \int_0^1 e \left(  \alpha_d \sigma_{t,d} (\tuplex) + \alpha_\theta \sigma_{t,\theta} (\tuplex) \right) \text d \alpha_d \text d \alpha_\theta = \int_{-\kappa}^{\kappa} \int_0^1 \left| f(\alpha_d, \alpha_\theta) \right|^{2t} \text d \tuplealpha.
			\end{equation*}
		Invoking Theorem \ref{thm3.3.3}, we deduce that for any fixed $ \epsilon > 0$ one has
			\begin{equation*}
				\begin{split}	
					\int_{-\kappa}^{\kappa} \int_0^1 \left| f(\alpha_d, \alpha_\theta) \right|^{2t} \text d \tuplealpha & \ll P^{ \frac{1}{2} n(n+1) -d} \cdot P^{2t - \frac{1}{2} n (n+1) - \theta + \epsilon} \\[10pt]
					& \ll P^{2t - (\theta +d) + \epsilon},
				\end{split}
			\end{equation*}
		which completes the proof.
	\end{proof}
	
Below we obtain mean value estimates for the exponential sums $f_i, g_j$ and $h_k.$

	\begin{lemma} \label{lem3.3.5}
		For each index $i,j$ and $k$ the following are valid.
			\begin{itemize}
				\item[(i)] Suppose that $ \kappa $ is a real number such that  $ \kappa | \lambda_i | \geq 1.$ Suppose further that $ t \geq A_\theta /2 $ is a natural number. Then, for any fixed $\epsilon > 0 $ one has
					\begin{equation*}
						\int_{ - \kappa}^{ \kappa} \int_0^1  |f_i (\alpha_d, \alpha_\theta)|^{2t} \hspace{0.05in} \normalfont \text d \tuplealpha \ll_{t,\theta, \epsilon, \lambda_i, a_i, x_i^\star} \kappa P^{2t - (\theta +d) + \epsilon}.
					\end{equation*}  		
				\item[(ii)]  Suppose that $ \kappa $ is a real number such that  $ \kappa | \mu_j | \geq 1.$ Suppose further that $ t \geq A_\theta / 2 $ is a natural number. Then, for any fixed $ \epsilon > 0$ one has				
					\begin{equation*}
						\int_{ - \kappa}^{\kappa}  |g_j(\alpha_\theta)|^{2t} \hspace{0.05in} \normalfont \text d \alpha_\theta \ll_{t,\theta, \epsilon, \mu_j, y_j^\star} \kappa P^{2t - \theta + \epsilon}.
					\end{equation*}
				\item[(iii)] Suppose that $ t \geq A_d / 2 $ is a natural number. Then, for any fixed $ \epsilon > 0,$ one has
					\begin{equation*}
						\int_0^1  |h_k(\alpha_d)|^{2t} \hspace{0.05in}  \normalfont \text d \alpha_d \ll_{t,d, \epsilon, b_k, z_k^\star} P^{2t- d + \epsilon}.
					\end{equation*}	
			\end{itemize}
		The implicit constants in the above estimates do not depend on $\kappa.$ Furthermore, for $ t > A_\theta /2$ in $ (i)$ and $(ii),$ and for $ t > A_d /2$ in $(iii),$ one can take $ \epsilon =0.$ 		
	\end{lemma}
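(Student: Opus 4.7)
The plan is to reduce each of the three estimates to the corresponding assertion of Lemma \ref{lem3.3.4} in two elementary steps. First, the relations \eqref{eq3.2.5} are used to replace the dyadic-range sums $f_i, g_j, h_k$ by the full-range sums $F_i, G_j, H_k$ evaluated at a parameter $Q \asymp P.$ Second, a linear change of variables absorbs the coefficients $a_i, \lambda_i, \mu_j, b_k,$ the periodicity of the resulting integrand in the $\alpha_d$-variable being exploited in order to return the integration to the unit interval.

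I would carry out part $(i)$ in detail; parts $(ii)$ and $(iii)$ are completely analogous. Combining \eqref{eq3.2.5} with the elementary inequality $|z_1+z_2|^{2t} \ll |z_1|^{2t}+|z_2|^{2t},$ it suffices to show that for each $Q \asymp P$ one has
\begin{equation*}
\int_{-\kappa}^{\kappa}\int_0^1 \bigl| F_i(\alpha_d, \alpha_\theta; Q) \bigr|^{2t} \,\text{d} \tuplealpha \ll \kappa\, Q^{2t - (\theta+d) + \epsilon}.
\end{equation*}
Since $a_i \in \mathbb Z \setminus \{0\}$ and the values $x^d$ are integers, the function $\alpha_d \mapsto F_i(\alpha_d,\alpha_\theta;Q)$ is periodic of period $1/|a_i|;$ thus the substitutions $\beta_d = a_i \alpha_d$ and $\beta_\theta = \lambda_i \alpha_\theta,$ together with the splitting of a $\beta_d$-integral of length $|a_i|$ into $|a_i|$ unit intervals, yield
\begin{equation*}
\int_{-\kappa}^{\kappa}\int_0^1 \bigl| F_i(\alpha_d, \alpha_\theta; Q) \bigr|^{2t} \,\text{d} \tuplealpha = \frac{1}{|\lambda_i|}\int_{-\kappa|\lambda_i|}^{\kappa|\lambda_i|}\int_0^1 \bigl| f(\beta_d, \beta_\theta; Q) \bigr|^{2t} \,\text{d} \tuplebeta.
\end{equation*}
The hypothesis $\kappa|\lambda_i|\geq 1$ now permits invoking Lemma \ref{lem3.3.4}$(i)$ with $\kappa|\lambda_i|$ in place of $\kappa,$ which bounds the right-hand side by $\kappa|\lambda_i|\, Q^{2t-(\theta+d)+\epsilon},$ and the required estimate follows upon cancellation of $|\lambda_i|.$

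Parts $(ii)$ and $(iii)$ proceed identically. For $(ii)$ one needs only the single substitution $\beta_\theta = \mu_j \alpha_\theta$ (whose validity rests on $\kappa|\mu_j|\geq 1$) before an application of Lemma \ref{lem3.3.4}$(ii);$ for $(iii)$ the substitution $\beta_d = b_k\alpha_d$ combined with the periodicity argument returns matters to an integral over $[0,1),$ and Lemma \ref{lem3.3.4}$(iii)$ then supplies the required conclusion. Uniformity of the implicit constants in $\kappa,$ together with the possibility of taking $\epsilon=0$ whenever $t$ strictly exceeds the stated threshold, are inherited directly from the corresponding properties in Lemma \ref{lem3.3.4}. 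I do not anticipate any genuine obstacle; the only book-keeping subtlety is ensuring that the periodicity-based change of variable in the $\alpha_d$-coordinate produces an exact identity, which rests on $a_i, b_k$ being integers and is ultimately routine.
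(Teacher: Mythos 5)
Your proposal is correct and takes essentially the same route as the paper: both reduce via the relations \eqref{eq3.2.5} and the elementary inequality $|z_1+z_2|^{2t}\ll|z_1|^{2t}+|z_2|^{2t}$ to the full-range sums $F_i,G_j,H_k$, rescale $\alpha_d$ and $\alpha_\theta$ to absorb the coefficients, invoke $1$-periodicity in the $d$-th power variable to fold the integration back to a unit interval, and then apply Lemma \ref{lem3.3.4}. The only (immaterial) cosmetic difference is that you note the folding step is an exact identity because $a_i$ and $b_k$ are integers, whereas the paper splits $[0,|a_i|]$ into $\lfloor|a_i|\rfloor+1$ subintervals and settles for a $\ll_{a_i}$ bound; both are perfectly adequate here.
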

	
	\begin{proof}
		We give a proof only for the estimate in $(i)$. One can argue in a similar fashion to establish the estimates in $(ii)$ and $(iii).$  
		
		Fix an index $i.$ Recalling (\ref{eq3.2.5}) we see that it suffices to prove the following estimate
			\begin{equation*}
				\int_{- \kappa}^{\kappa} \int_0^1 \left| F_i (\alpha_d, \alpha_\theta) \right|^{2t} \text d \tuplealpha \ll \kappa P^{2t - (\theta +d) + \epsilon}.
			\end{equation*}
		Making a change of variables  by
			\begin{equation*}
				\displaystyle
				\begin{pmatrix}
					\alpha_\theta \\
					\alpha_d
				\end{pmatrix}
				=
				\begin{pmatrix}
					\frac{1}{ | \lambda_i|} & 0 \\
					0 & \frac{1}{ | a_i|} 
				\end{pmatrix}
				\begin{pmatrix}
					\beta_\theta \\
					\beta_d
				\end{pmatrix},
			\end{equation*}
		yields
			\begin{equation*}
					\int_{- \kappa}^{\kappa} \int_0^1 \left| F_i (\alpha_d, \alpha_\theta) \right|^{2t} \text d \tuplealpha  = \frac{1}{ | \lambda_i a_i|} \int_{ - \kappa | \lambda_i|}^{ \kappa | \lambda_i|} \int_0^{ | a_i|} \left| f(\pm \beta_d, \pm \beta_\theta) \right|^{2t} \text d \tuplebeta.
			\end{equation*}
		One can chop the interval $[0, | a_i|]$ into at most $ \lfloor | a_i| \rfloor + 1$ intervals of length at most one. Moreover, because of the $1$-periodicity with respect to $ \beta_d$ one has
			\begin{equation*}
				\begin{split}	
					\int_{ - \kappa | \lambda_i|}^{ \kappa | \lambda_i|} \int_0^{ | a_i|} \left| f(\pm \beta_d, \pm \beta_\theta) \right|^{2t} \text d \tuplebeta & \ll \sum_{n =0}^{ \lfloor | a_i| \rfloor} \int_{ - \kappa | \lambda_i|}^{ \kappa | \lambda_i|}  \int_{n}^{n+1} \left| f(\pm \beta_d, \pm \beta_\theta ) \right|^{2t} \text d \tuplebeta \\[10pt]
					& \ll_{a_i} \int_{ - \kappa | \lambda_i|}^{ \kappa | \lambda_i|}  \int_0^1 \left| f(\pm \beta_d, \pm \beta_\theta ) \right|^{2t} \text d \tuplebeta.
				\end{split}
			\end{equation*}
		Finally, if necessary, one can make one more change of variables. This together with the fact that $ f( - \tuplebeta) = \overline{ f(\tuplebeta )} $ yields
			\begin{equation*}
				\int_{ - \kappa | \lambda_i|}^{ \kappa | \lambda_i|}  \int_0^1 \left| f(\pm \beta_d, \pm \beta_\theta ) \right|^{2t} \text d \tuplebeta = \int_{ - \kappa | \lambda_i|}^{ \kappa | \lambda_i|}  \int_0^1 \left| f( \beta_d,  \beta_\theta ) \right|^{2t} \text d \tuplebeta.
			\end{equation*} 
		The conclusion now follows by applying Lemma \ref{lem3.3.4}. 
	\end{proof}

We now estimate the auxiliary mean values $ \Xi_{f_i}, \Xi_{f_i, g_j}, \Xi_{f_i, h_k}$ and $\Xi_{g_j, h_k}.$ 

	\begin{lemma} \label{lem3.3.6}
		Let $ \kappa $ be a real number such that for each index $i$ and $j$ one has $ \kappa | \lambda_i| \geq 1 $ and $  \kappa | \mu_j| \geq 1.$ Let $ \mathcal B = [0,1] \times  [- \kappa, \kappa] .$ Then, for each index $i,j$ and $k,$ and for any fixed $ \epsilon >0 $ one has 
			\begin{itemize}
				\item[(i)]  $ \displaystyle \Xi_{f_i}  (\mathcal B)  \ll \kappa P^{A_\theta - (\theta +d) + \epsilon} $ ;
				\item[(ii)] $ \displaystyle \Xi_{f_i,g_j}  (\mathcal B)  \ll \kappa P^{A_\theta +A_d - (\theta +d) + \epsilon}$ ;
				\item[(iii)] $ \displaystyle \Xi_{f_i, h_k}  (\mathcal B) \ll\kappa P^{ A_\theta +A_d - (\theta +d) + \epsilon}$ ;
				\item[(iv)]	$ \displaystyle  \Xi_{g_j,h_k}  (\mathcal B) \ll \kappa P^{ A_\theta +A_d - (\theta +d) + \epsilon} .$
			\end{itemize}
		The implicit constants in the above estimates do not depend on $\kappa.$
	\end{lemma}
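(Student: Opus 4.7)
The four bounds are proved by a common strategy: use $|K_\pm(\alpha_\theta)| \ll 1$ (from \eqref{eq3.2.2}) to discard the kernel, then either apply Lemma \ref{lem3.3.5} directly or fibre over one variable and reduce the inner integral by orthogonality.

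Estimates (i) and (iv) are essentially immediate. For (i), one has $\Xi_{f_i}(\mathcal B) \ll \int_{-\kappa}^{\kappa}\int_0^1 |f_i|^{A_\theta}\,d\tuplealpha$, which Lemma \ref{lem3.3.5}(i) with $t = A_\theta/2$ bounds by $\kappa P^{A_\theta - (\theta+d) + \epsilon}$. For (iv), since $g_j$ and $K_\pm$ depend only on $\alpha_\theta$ while $h_k$ depends only on $\alpha_d$, the integrand factorises completely and Lemmas \ref{lem3.3.5}(ii) and (iii) combine to give the bound.

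For (ii), the plan is to fibre over $\alpha_\theta$: setting $J(\alpha_\theta) = \int_0^1 |f_i(\alpha_d, \alpha_\theta)|^{A_d}\,d\alpha_d$, one has $\Xi_{f_i, g_j}(\mathcal B) \ll \int_{-\kappa}^{\kappa} |g_j(\alpha_\theta)|^{A_\theta}\,J(\alpha_\theta)\,d\alpha_\theta$. Expanding $|f_i|^{A_d} = f_i^{A_d/2}\overline{f_i}^{A_d/2}$ and integrating over $\alpha_d$, orthogonality forces $a_i \sigma_{A_d/2, d}(\tuplex) = 0$; the residual $\alpha_\theta$-factor has modulus one, so the triangle inequality together with Lemma \ref{lem3.3.5}(iii) applied to the pure $d$-power sum over $I_i = (x_i^\star P/2,\, 2 x_i^\star P]$ yields $J(\alpha_\theta) \ll P^{A_d - d + \epsilon}$ uniformly in $\alpha_\theta$. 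Invoking Lemma \ref{lem3.3.5}(ii) for the outer $\alpha_\theta$-integral finishes (ii).

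Part (iii) is the delicate case. The natural fibration
\begin{equation*}
\Xi_{f_i, h_k}(\mathcal B) \ll \int_0^1 |h_k(\alpha_d)|^{A_d}\,J'(\alpha_d)\,d\alpha_d, \qquad J'(\alpha_d) = \int_{-\kappa}^\kappa |f_i(\alpha_d, \alpha_\theta)|^{A_\theta}\,d\alpha_\theta,
\end{equation*}
demands the uniform bound $J'(\alpha_d) \ll \kappa P^{A_\theta - \theta + \epsilon}$, despite $f_i$ genuinely depending on $\alpha_d$ through the phase $e(a_i \alpha_d x^d)$. The key device, borrowed from the proof of Lemma \ref{lem3.3.2}, is the sinc-squared majorant: since $\sinc^2(u) \gg 1$ on $|u| \leq 1/2$, one has $J'(\alpha_d) \ll \int_{-\infty}^\infty |f_i|^{A_\theta}\,\sinc^2(\alpha_\theta/(2\kappa))\,d\alpha_\theta$. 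Expanding $|f_i|^{A_\theta}$ and applying the Fourier identity $\int e(\xi\alpha)\sinc^2(\alpha/(2\kappa))\,d\alpha = 2\kappa\Lambda(2\kappa\xi)$, the $\alpha_d$-dependence of each summand becomes a unit-modulus weight that the triangle inequality absorbs, yielding $J'(\alpha_d) \ll \kappa\,V_{A_\theta/2}(I_i;\, 1/(2\kappa|\lambda_i|))$. Lemma \ref{lem3.3.1} combined with Lemma \ref{lem3.3.4}(ii) (applied to the pure $\theta$-power sum over $I_i$) gives $V_{A_\theta/2}(I_i; \delta) \ll P^{A_\theta - \theta + \epsilon}$ uniformly in $\delta \in (0, 1/2]$, since the factors $\delta$ and $1/(2\delta)$ cancel exactly. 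Closing (iii) then amounts to applying Lemma \ref{lem3.3.5}(iii) to $\int_0^1 |h_k|^{A_d}\,d\alpha_d$. The principal obstacle is precisely this uniformity of $J'$ in $\alpha_d$, which no direct H\"older or Cauchy--Schwarz argument can achieve without losing a factor of $P^{\theta}$.
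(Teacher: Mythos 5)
Your proof is correct, and for parts (ii) and (iii) it takes a genuinely different route from the paper's.

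For (i) and (iv) your approach coincides with the paper's. For (ii) and (iii), where the paper invokes Lemma \ref{lem3.3.2} to convert each mean value into a solution-counting problem ($Z_1(P)$ or $Z_2(P)$) and then factors that count by fixing a block of variables, you instead fibre the double integral over one variable and bound the inner fibre \emph{uniformly}, then integrate the remaining single-variable power against Lemma \ref{lem3.3.5}. Your treatment of (ii) is arguably cleaner: fibring over $\alpha_\theta$ makes the inner $\alpha_d$-integral a genuine integral over $[0,1]$, so exact orthogonality replaces the sinc-squared majorant entirely and the uniformity of $J(\alpha_\theta)\ll P^{A_d-d+\epsilon}$ is immediate from the triangle inequality. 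For (iii) the fibration must go the other way, and the inner $\alpha_\theta$-integral then demands exactly the Davenport--Heilbronn sinc device that underlies Lemma \ref{lem3.3.2}; you apply it to a single variable rather than to both. You correctly identify the crux: the $\alpha_d$-dependent phase $e(a_i\alpha_d\sigma_{A_\theta/2,d}(\tuplex))$ is unimodular and drops out under the triangle inequality applied to the sum, giving $J'(\alpha_d)\ll\kappa\,V_{A_\theta/2}(I_i;1/(2\kappa|\lambda_i|))$ with a bound independent of $\alpha_d$, which a naive H\"older split of the integral cannot reproduce. Both treatments then multiply the same two counting bounds $P^{A_\theta-\theta+\epsilon}$ and $P^{A_d-d+\epsilon}$; the difference is the order of operations and whether the $\theta$-count is phrased globally via $Z_2(P)$ or locally on a fibre.

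One small citation slip: you invoke Lemma \ref{lem3.3.4}(ii) for the pure $\theta$-power sum over $I_i=(x_i^\star P/2,\,2x_i^\star P]$, but that lemma concerns the sum over $[1,P]$; to transfer to $I_i$ you need the splitting identity (\ref{eq3.2.5}) (equivalently, the argument in the proof of Lemma \ref{lem3.3.5}(ii)), precisely as the paper does. The analogous remark applies to your use of Lemma \ref{lem3.3.5}(iii) for the $d$-power sum over $I_i$ rather than over $(z_k^\star P/2,\,2z_k^\star P]$. These are trivial fixes and do not affect the validity of the argument.
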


	\begin{proof} 
		 In the following we make use of the fact that by (\ref{eq3.2.2}) one has $ |K_\pm (\alpha_\theta)| \ll 1 .$ The estimate $(i)$ follows by part $(i)$ of Lemma \ref{lem3.3.5} with $ t = A_\theta /2.$  The proof of the estimate $(iv)$ is straightforward. One can write
			\begin{equation*}
					\Xi_{g_j, h_k}  \ll \left( \int_{- \kappa}^{  \kappa} \left| g_j (\alpha_\theta) \right|^{A_\theta} \text d \alpha_\theta \right) \left( \int_0^1 \left| h_k (\alpha_d) \right|^{A_d} \text d \alpha_d \right),
			\end{equation*} 
		and the conclusion now follows by using $(ii)$ and $(iii)$ of Lemma \ref{lem3.3.5}.
			
		Now we turn our attention to the estimate in $(ii).$ Fix indices $i$ and $j.$ We put 
			\begin{equation*}
				M = \frac{1}{2 \max_{i,j} \left\{ | \lambda_i|^{-1}, | \mu_j|^{-1}  \right\} }> 0,
			\end{equation*}
		which is a fixed real number. By the assumption  $ \kappa \geq \max_{i,j} \{ | \lambda_i|^{-1}, | \mu_j|^{-1} \}$ one has that $ 1/(2 \kappa) \leq M.$ Hence,  by Lemma \ref{lem3.3.2} and extending plainly the range of the inequality, one has (up to constants which are independent of $\kappa$) that
			\begin{equation} \label{Xfigj_bound_kZ1}
				\Xi_{f_i, g_j} \ll \kappa Z_1 (P) \ll \kappa Z_1^\prime (P),
			\end{equation}
		where $Z_1^\prime (P)$ denotes the number of integer solutions of the system
			\begin{equation} \label{eq3.3.7}
				\begin{cases}	
					\displaystyle \left| \lambda_i \sum_{ i =1}^{  \frac{A_d}{2}} \left(x_i^\theta - x_{\frac{A_d}{2} +i}^\theta \right) + \mu_j \sum_{ i= 1}^{  \frac{A_\theta}{2}} \left(y_i^\theta - y_{\frac{A_\theta}{2} +i}^\theta \right) \right| < M \\[15pt]
					\displaystyle a_i \sum_{ i= 1}^{ \frac{A_d}{2}} \left(x_i^d - x_{\frac{A_d}{2} +i}^d \right) =0,
				\end{cases}	
			\end{equation}			
		with  $ \frac{1}{2} x_i^\star P < \tuplex \leq 2 x_i^\star P $ and $ \frac{1}{2} y_i^\star P <\tupley \leq 2 y_i^\star P.$ By orthogonality, the number of integer solutions of the equation in (\ref{eq3.3.7}) is counted by the mean value 
			\begin{equation*}
				\int_0^1 \left| \sum_{  \frac{1}{2} x_i^\star P < x \leq 2 x_i^\star P} e(\alpha x^d) \right|^{A_d} \text d \alpha_d.
			\end{equation*}	
		Note that
			\begin{equation*}
				\left| \sum_{  \frac{1}{2} x_i^\star P < x \leq 2 x_i^\star P} e(\alpha x^d) \right| \ll \left| h \left( \alpha ; 2x_i^\star P \right) \right| + \left| h \left( \alpha ; \frac{1}{2} x_i^\star P \right) \right|.
			\end{equation*}
		So by Lemma \ref{lem3.3.4} one has for any fixed $ \epsilon > 0$ that
			\begin{equation*}
				 \int_0^1 \left| \sum_{  \frac{1}{2} x_i^\star P < x \leq 2 x_i^\star P} e(\alpha x^d) \right|^{A_d} \text d \alpha_d \ll P^{  A_d  - d + \epsilon}.
			\end{equation*} 
		
		Let us fix an integer solution $\tuplex$ for the equation in (\ref{eq3.3.7}). As we proved, this can be done by choosing among $  O \left( P^{  A_d - d + \epsilon} \right)$ possibilities. Substitute now these values into the inequality in (\ref{eq3.3.7}). Then the first block of variables is fixed and so one has to count the number of solutions of the inhomogeneous inequality
			\begin{equation*} \label{auxuliary_xi_figj_ineq_fixed_part}
					\displaystyle \left| \mu_j \sum_{ i= 1}^{  \frac{A_\theta}{2}} \left(y_i^\theta - y_{\frac{A_\theta}{2} + i}^\theta \right) + L \right| < M
			\end{equation*} 
		 with $ \frac{1}{2} y_i^\star P <\tupley \leq 2 y_i^\star P,$  where $L = L(\lambda_i, \theta, d, \epsilon, \tuplex)$ is a fixed real number, determined by the choice we made for the tuple $\tuplex.$ We write $ V_{A_\theta}^{(1)} (P)$ to denote the number of integer solutions of this inhomogeneous inequality. As a consequence of \cite[Theorem 1.2]{poulias_ineq_frac} one has 
			\begin{equation*}
					\displaystyle  V_{A_\theta}^{(1)} (P) \ll  P^{ A_\theta - \theta + \epsilon}.
			\end{equation*}
		Hence, we have showed that $ Z_1^\prime  (P) \ll P^{A_\theta + A_d - (\theta +d) + \epsilon}$ and in view of (\ref{Xfigj_bound_kZ1}) the proof of $(ii)$ is now complete.
		
		Similarly we argue for $(iii).$ Fix indices $i$ and $k.$ As before,  by Lemma \ref{lem3.3.2} one now has (up to constants which are independent of $\kappa$) that
			\begin{equation} \label{eq3.3.8}
			\Xi_{f_i, h_k} \ll \kappa Z_2 (P) \ll \kappa Z_2^\prime (P),
			\end{equation}
		where $Z_2^\prime (P)$ denotes the number of integer solutions of the system
			\begin{equation} \label{eq3.3.9}
				\begin{cases}
					\displaystyle \left| \lambda_i \sum_{ i=1}^{  \frac{A_\theta}{2}} \left(x_i^\theta - x_{\frac{A_\theta}{2} +i}^\theta \right) \right| < M \\[15pt]
					\displaystyle a_i \sum_{ i= 1}^{ \frac{A_\theta}{2}} \left(x_i^d - x_{\frac{A_\theta}{2} + i}^d \right) + b_k  \sum_{ i= 1 }^{ \frac{A_d}{2}} \left(z_i^d - z_{\frac{A_d}{2} +i}^d \right) =0,
				\end{cases}
			\end{equation}
		with $ \frac{1}{2} x_i^\star P < \tuplex \leq 2 x_i^\star P $ and $ \frac{1}{2} z_i^\star P <\tuplez \leq 2 z_i^\star P.$ We write $V_{A_\theta}^{(2)} (P)$ to denote the number of integer solutions of the inequality in (\ref{eq3.3.9}). By Lemma \ref{lem3.3.1} one has
			\begin{equation*}
				V_{A_\theta}^{(2)} (P) \ll  \int_{ \frac{- M | \lambda_i|}{2}}^{ \frac{M | \lambda_i|}{2}} \hspace{0.05in} \left| \sum_{ \frac{1}{2} y_i^\star P < y \leq 2y_i^\star P } e ( \alpha x^\theta) \right|^{A_\theta} \text d \alpha.
			\end{equation*}
		As in $(ii)$ we can show that for any fixed $ \epsilon > 0$ one has
			\begin{equation*}
					V_{A_\theta}^{(2)} (P) \ll P^{A_\theta - \theta + \epsilon}.
			\end{equation*}
		
		Fix a solution $ \tuplex$ counted by $ V_{A_\theta}^{(2)} (P).$ Substitute these values into the equation of system in (\ref{eq3.3.9}). Then the first block of variables becomes a fixed integer, say $C = C(\lambda_i, \theta, \epsilon, \tuplex),$ which depends on the choice we made for the tuple $ \tuplex.$ Hence, this equation takes the shape
			\begin{equation*}
				a_i C + b_k \sum_{ i= 1 }^{ \frac{A_d}{2}} \left(z_i^d - z_{\frac{A_d}{2} +i}^d \right) = 0.
			\end{equation*}
		Note that if $b_k$ does not divide the product $a_i C$, then the above equation is not soluble in integers. In such a case $Z_2(P) =0$ and the claimed estimate holds trivially. Hence, assuming that $b_k \mid (a_i C)$ we can rewrite it as
			\begin{equation*}
				 \sum_{ i= 1 }^{ \frac{A_d}{2}} \left(z_i^d - z_{\frac{A_d}{2} +i}^d \right) = C^\prime,
			\end{equation*}
		where $C^\prime  = C^\prime(\lambda_i, a_i, b_k,\theta, \epsilon, \tuplex)$ is a fixed integer determined by the choice we made for the tuple $ \tuplex.$ The number of integer solutions of this last equation is bounded above by the mean value 
			\begin{equation*}
				\int_0^1 \left| \sum_{  \frac{1}{2} z_i^\star P < z \leq 2 z_i^\star P} e (\alpha z^d) \right|^{A_d} e(- \alpha C^\prime) \text d \alpha.
			\end{equation*}
		
		Again note that
			\begin{equation*}
				\left| \sum_{  \frac{1}{2} z_i^\star P < z \leq 2 z_i^\star P} e (\alpha z^d) \right| \ll \left| h \left(\alpha ; 2z_i^\star P \right) \right| + \left| h \left(\alpha ; \frac{1}{2} z_i^\star P \right) \right|.
			\end{equation*}
		So, by the triangle inequality and invoking Lemma \ref{lem3.3.4} we deduce that
			\begin{equation*}
				\int_0^1 \left| \sum_{  \frac{1}{2} z_i^\star P < z \leq 2 z_i^\star P} e (\alpha z^d) \right|^{A_d} e(- \alpha C^\prime) \text d \alpha \ll P^{A_d - d +\epsilon}.
			\end{equation*}
		Hence, we deduce that $ Z_2^\prime  (P) \ll P^{A_\theta +A_d  (\theta +d) + \epsilon}.$ In view of (\ref{eq3.3.8}) the proof of the estimate $(iii)$ is now complete.
	\end{proof}



\section{Minor arcs analysis}

In this section we deal with the set of minor arcs $ \mathfrak p = \left( [0,1) \times  \mathfrak m \right) \cup \left( \mathfrak n_\xi \times \mathfrak M \right).$ Here we aim to show that for $ s_{ \min} \leq s \leq s_{\max}$ one has
	\begin{equation*}
		\int_{\mathfrak p} \left| \mathcal F (\tuplealpha) K_\pm(\alpha_\theta) \right| \text d \tuplealpha = o \left( P^{s - (\theta +d)} \right).
	\end{equation*}
For a better presentation of our approach we split the analysis into two parts, dealing separately with the sets $ [0,1) \times  \mathfrak m $ and $ \mathfrak n_\xi \times \mathfrak M .$

\subsection{Minor arcs: Part 1}

First we consider the case where $ ( \alpha_d, \alpha_\theta) \in  [0,1) \times \mathfrak m. $ Recall that the set $ \mathfrak m$  is given by
	\begin{equation*}
		\mathfrak m = \{ \alpha_\theta \in \mathbb R : P^{- \theta + \delta_0} \leq | \alpha_\theta| < P^{\omega }  \}.
	\end{equation*}
Define the intervals $ \mathfrak m^{+} = \left[ P^{- \theta +\delta_0}, P^{\omega } \right), \mathfrak m^{-} = \left( -P^{\omega}, - P^{- \theta +\delta_0} \right] $ and note that $ \mathfrak m = \mathfrak m^{+} \cup \mathfrak m^{-}.$ Recall (\ref{eq3.2.9}). Making a change of variables by
	\begin{equation} \label{eq3.4.1}
		\begin{pmatrix}
			\alpha_\theta \\
			\alpha_d
		\end{pmatrix}
		=
		\begin{pmatrix}
			-1 & 0 \\
			0 & -1 
		\end{pmatrix}
		\begin{pmatrix}
			\beta_\theta \\
			\beta_d
		\end{pmatrix}
		+
		\begin{pmatrix}
			0 \\
			1	
		\end{pmatrix},
	\end{equation}
and using the $1$-periodicity of the function $ \mathcal F (\tuplealpha)$ with respect to $ \alpha_d$ yields
	\begin{equation} \label{eq3.4.2}
		R_\pm \left( P ;  [0,1) \times \mathfrak m^{-} \right) = \overline{	R_\pm \left( P ; [0,1) \times  \mathfrak m^{+} \right)},
	\end{equation}
where $ \overline{	R_\pm \left( P ; [0,1) \times  \mathfrak m^{+} \right)}$ is the complex conjugate. Therefore, it suffices to deal with the set $ [0,1) \times \mathfrak m^{+} .$

Let $f$ be a real valued function defined on the natural numbers, and let $ h \in \mathbb N.$ Define the forward difference operator $ \Delta_hf $ via the relation
	\begin{equation*}
		\left( \Delta_h f  \right)(x) = f(x+h) - f(x).
	\end{equation*}
For a tuple $ \tupleh= (h_1, \ldots, h_t) \in \mathbb N^t$ we define the difference operator $  \Delta_{h_1, \ldots, h_t} = \Delta_{\tupleh}^{(t)}$ inductively by
	\begin{equation*}
		\Delta_{\tupleh}^{(t)} f(x) = \Delta_{h_t}\left(\Delta_{h_1, \ldots, h_{t-1}}f(x)\right).
	\end{equation*}			
It is apparent that the operator $ \Delta_h$ is a linear one. Namely, for constants $a,b,$ and two functions $f,g,$ one has
	\begin{equation*}
		\Delta_h \left( af + bg \right) = a \Delta_h f + b \Delta_h g.
	\end{equation*}

For $ d\geq 2$ one can inductively verify that 
	\begin{equation*}
		\Delta_{\tupleh}^{(d)} (x^d) = d!\  h_1 \cdots h_d.
	\end{equation*}
Next, we wish to obtain an analogous result for the $r$--th derivative of a monomial of fractional degree $ \theta.$ 

	\begin{lemma} \label{lem3.4.1}
		 Suppose that $ t \leq \lfloor \theta \rfloor $ is a natural number. Let $ \tupleh = (h_1, \ldots, h_t) \in \left( \mathbb N \cap [1,P] \right)^t$ and suppose that $ P < x \leq 2P.$ Then for each natural number $ r \geq 1 $ one has
			\begin{equation*}
				\left| \frac{\normalfont \text d^r}{ \normalfont  \text d x^r} \Delta_{\tupleh}^{(t)} (x^\theta) \right| \asymp  h_1 \cdots h_t P^{\theta -r- t}. 
			\end{equation*}
	\end{lemma}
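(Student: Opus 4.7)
The plan is to express $\Delta_{\tupleh}^{(t)}(x^\theta)$ as a $t$-fold integral of its $t$-th derivative, differentiate $r$ more times under the integral sign, and then read off the required estimate from a pointwise bound on the integrand.

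First, by iterated application of the fundamental theorem of calculus, I would establish by induction on $t$ the representation
\begin{equation*}
\Delta_{\tupleh}^{(t)} f(x) = \int_0^{h_1}\cdots\int_0^{h_t} f^{(t)}(x + u_1 + \cdots + u_t)\,\text{d}u_t \cdots \text{d}u_1,
\end{equation*}
valid for any sufficiently smooth function $f$ on $(0,\infty)$. The base case $t=1$ is $f(x+h_1) - f(x) = \int_0^{h_1} f'(x+u_1)\,\text{d}u_1$, and the inductive step follows by applying $\Delta_{h_{t+1}}$ inside the integral and invoking the fundamental theorem once more.

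Specializing to $f(x) = x^\theta$, which is smooth on $(0,\infty)$, and interchanging the $r$-fold derivative in $x$ with the iterated integral, one obtains
\begin{equation*}
\frac{\text{d}^r}{\text{d}x^r}\Delta_{\tupleh}^{(t)}(x^\theta) = c_{\theta,t,r} \int_0^{h_1}\cdots\int_0^{h_t}(x + u_1 + \cdots + u_t)^{\theta - t - r}\,\text{d}u_t\cdots\text{d}u_1,
\end{equation*}
where $c_{\theta,t,r} = \theta(\theta-1)\cdots(\theta-t-r+1)$. Because $\theta$ is non-integral, no factor $\theta - k$ vanishes, so $c_{\theta,t,r}$ is a non-zero constant depending only on $\theta,t,r$.

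Finally, for $P < x \leq 2P$ and $0 \leq u_i \leq h_i \leq P$, the quantity $y := x + u_1 + \cdots + u_t$ lies in $(P,(t+2)P]$, so $y \asymp_t P$ and hence $y^{\theta - t - r}$ has constant positive sign with magnitude $\asymp_{\theta,t,r} P^{\theta - t - r}$ throughout the box $[0,h_1]\times\cdots\times[0,h_t]$. Since $c_{\theta,t,r} \neq 0$, the whole integrand has constant sign, which rules out any cancellation; integrating over a box of volume $h_1 \cdots h_t$ then yields the desired two-sided bound $\asymp h_1\cdots h_t\, P^{\theta - t - r}$. The only subtle point is the non-integrality of $\theta$: without it one could have $c_{\theta,t,r} = 0$ and the lower bound would collapse. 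No other obstacle arises, since the positivity of $y^{\theta-t-r}$ on the domain makes both sides of the asymptotic immediate.
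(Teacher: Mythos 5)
Your proof is correct, but it follows a different route from the paper's. The paper first commutes $\frac{\text{d}^r}{\text{d}x^r}$ with $\Delta_{\tupleh}^{(t)}$ to reduce to estimating $\Delta_{\tupleh}^{(t)}(x^{\theta-r})$, and then proves by induction on $t$, applying the (Lagrange form of the) mean value theorem at each step, that $\Delta_{\tupleh}^{(t)}(x^{\theta-r}) = C_{r,t}\,h_1\cdots h_t\,\xi_x^{\theta-r-t}$ for a single point $\xi_x$ with $x < \xi_x < x + h_1 + \cdots + h_t$; the conclusion then follows from $\xi_x \asymp P$. You instead use the integral form of the mean value theorem: the $t$-fold integral representation of $\Delta_{\tupleh}^{(t)} f$ over the box $[0,h_1]\times\cdots\times[0,h_t]$, differentiation under the integral sign, and sign-constancy of the resulting integrand. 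Both proofs are sound and rest on the same two facts — that the argument $x + u_1 + \cdots + u_t$ (resp.\ $\xi_x$) is comparable to $P$, and that the falling-factorial constant $\theta(\theta-1)\cdots(\theta-t-r+1)$ is non-zero because $\theta$ is non-integral. Your integral formulation has the small advantage that the two-sided bound falls out at once from positivity, with no need to track intermediate points through an induction; the paper's Lagrange-form argument is marginally more self-contained in that it avoids justifying differentiation under the integral (trivial here, but still a step). You also make explicit, which the paper leaves implicit, that the non-integrality of $\theta$ is precisely what guarantees the lower bound does not degenerate — a worthwhile observation.
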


	\begin{proof}
		Observe that if $ \phi : I \to \mathbb R $ is an $r$ times differentiable function defined on an interval $I$ and $h$ is a natural number, then one has for $x_0 \in I$ that
			\begin{equation*}
				\frac{ \text d^r}{ \text d x^r} \Delta_h \phi(x) \Bigr|_{x=x_0} = \frac{ \text d^r}{ \text d x^r}  \left( \phi (x+h) - \phi (x) \right)  \Bigr|_{x=x_0}  = \Delta_h \left( \frac{ \text d^r}{ \text d x^r} \phi (x)  \Bigr|_{x=x_0} \right).
			\end{equation*}
		From the inductively definition of the operator $ \Delta_{\tupleh}^{(t)} $ and iterating we obtain from the above observation that
			\begin{equation*}
				\frac{ \text d^r}{ \text d x^r} \left( \Delta_{\tupleh}^{(t)} (x^\theta) \right) \Bigr|_{x=x_0} =  \Delta_{\tupleh}^{(t)} \left( \frac{ \text d^r}{ \text d x^r} (x^\theta) \Bigr|_{x=x_0} \right) = C_r  \Delta_{\tupleh}^{(t)} (x_0^{\theta -r}),
			\end{equation*} 
		where $ C_r = \theta (\theta-1) \cdots (\theta -r +1).$ 
		
		From the above considerations follows that it suffices to show
			\begin{equation} \label{eq3.4.3}
				\left| \Delta_{\tupleh}^{(t)} (x^{\theta -r}) \right| \asymp h_1 \cdots h_t P^{\theta-r-t}.
			\end{equation}
		To this end, we use induction on the number of shifts $t$ and apply successively the mean value theorem of differential calculus. We show that one has
			\begin{equation*}
				\Delta_{\tupleh}^{(t)} (x^{\theta -r}) =  C_{r,t} h_1 \cdots h_t \xi_x^{\theta -r-t}, 
			\end{equation*}
		for some  $ \xi_x = \xi_{x,\tupleh}$  with $ x < \xi_x < x + h_1 + \cdots + h_t,$ where $C_{r,t} = (\theta-r)(\theta-r-1) \cdots (\theta-r-t+1).$ 
		
		Indeed, for $t=1 $ one has  
			\begin{equation*}
				\Delta_{h_1} (x^{\theta-r}) =  \left( (x+h_1)^{\theta-r} - x^{\theta-r} \right) = (\theta-r) h_1 \xi_x^{\theta-r-1},
			\end{equation*}
		for some $ \xi_x = \xi_{x, h_1}$ with $ x < \xi_x < x + h_1.$ Assume that the statement of the lemma holds for $t-1.$ We prove that it does hold for $t.$ By the definition of the forward difference operator one has
			\begin{equation*}
				\Delta_{\tupleh}^{(t)} (x^{\theta-r}) = \Delta_{h_t} \left(\Delta_{\tupleh^\prime}^{(t-1)} (x^{\theta-r}) \right),
			\end{equation*}
		where $\tupleh^\prime = (h_1, \ldots, h_{t-1}).$ By the inductive hypothesis one has
			\begin{equation*}
				\Delta_{\tupleh}^{(t-1)} (x^{\theta-r})  =  (\theta-r)  \cdots (\theta -r- t+2) h_1 \cdots h_{t-1} \zeta_x^{\theta -r- t +1},
			\end{equation*}		
		for some $ \zeta_x = \zeta_{x, \tupleh^\prime}$ with $ x < \zeta_x < x + h_1 + \cdots + h_{t-1}.$ We put $ f(\zeta_x) = \zeta_x^{\theta-r-t+1}$ and write  
			\begin{equation*}
				f^\prime (\zeta_x) = \frac{ \text d f(\zeta_x)}{ \text d \zeta_x}.
			\end{equation*}
		Clearly, $ f^\prime (\zeta_x) = (\theta -r- t+1) \zeta_x^{\theta -r-t}.$ One now has
			\begin{equation} \label{eq3.4.4}
				\Delta_{\tupleh}^{(t)} (x^{\theta-r}) =  (\theta-r)  \cdots (\theta -r- t+2)  h_1 \cdots h_{t-1} \left( f (\zeta_x+ h_t) -  f(\zeta_x) \right).
			\end{equation}
		To treat the expression in the parenthesis one can apply the mean value theorem of differential calculus to the function $ f.$ Hence one may write
			\begin{equation} \label{eq3.4.5}
				f (\zeta_x + h_t) -  f(\zeta_x) = (\theta- r-t+1) h_t \xi_x^{\theta-r-t},
			\end{equation}
		for some $ \xi_x = \xi_{x, \tupleh} $ with $ \zeta_x < \xi_x < \zeta_x + h_t.$ By the induction process it is apparent that one has $ x < \xi_x < x + h_1 + \cdots + h_t.$ It is apparent that whenever $ 1 \leq \tupleh \leq P$ and $ P < x \leq 2P$ one has $ \xi_x \asymp x \asymp P.$ Putting together (\ref{eq3.4.4}) and (\ref{eq3.4.5}) confirms (\ref{eq3.4.3}), and thus the proof of the lemma is complete.
	\end{proof}

In the analysis below we make use of Weyl's inequality arising from the differencing process.

	\begin{lemma}[Weyl's inequality] \label{lem3.4.2}
		Let $\phi (x)$ be a real valued function defined over the natural numbers. Let $ d \geq 2 $ be a natural number, and write $ D = 2^{d-1}.$ Then one has
			\begin{equation*}
				\left| \sum_{ 1 \leq x \leq X} e( \phi (x)) \right|^D \ll X^{D-1} + X^{D-d}  \left| \sum_{h_1 =1}^X \cdots \sum_{h_{d-1} =1}^X \sum_{ 1 \leq x < x+ Y_{d-1} \leq X} e \left( \Delta_{\tupleh}^{(d-1)} (\phi(x)) \right) \right|,
			\end{equation*}
		where $Y_j = h_1 + \cdots h_j,$ for each $j.$ The implied constant depends only on $d,$ and an empty sum denotes zero.
	\end{lemma}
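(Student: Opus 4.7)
I would proceed by induction on $d$. The base case $d=2$ (so $D=2$) follows by expanding the square and making the change of variables $h = y - x$:
\begin{equation*}
|S|^2 = \sum_{1 \leq x, y \leq X} e(\phi(y) - \phi(x)) = X + 2\operatorname{Re} \sum_{h=1}^{X-1} \sum_{1 \leq x \leq X-h} e(\Delta_h \phi(x)),
\end{equation*}
where the diagonal $h=0$ contributes $X$. Passing to absolute values yields the desired bound, since $D-1 = 1$ and $D-d = 0$.

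\textbf{Inductive step.} Assume the bound at stage $d$, and denote by $T_{d-1}$ the iterated-difference sum appearing on its right-hand side. Squaring gives
\begin{equation*}
|S|^{2D} \ll X^{2D-2} + X^{2D-2d} |T_{d-1}|^2.
\end{equation*}
I would then apply Cauchy--Schwarz in the $d-1$ differencing variables,
\begin{equation*}
|T_{d-1}|^2 \leq X^{d-1} \sum_{h_1,\ldots,h_{d-1}=1}^X \biggl| \sum_x e\bigl(\Delta_{\tupleh}^{(d-1)} \phi(x)\bigr)\biggr|^2,
\end{equation*}
and expand each inner square via the substitution $h_d = y - x$, using the algebraic identity $\Delta_{h_d} \Delta_{\tupleh}^{(d-1)} \phi(x) = \Delta_{(\tupleh,h_d)}^{(d)} \phi(x)$, which is immediate from the inductive definition of the operator and the linearity of $\Delta_{h_d}$. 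Isolating the $h_d = 0$ diagonal (which contributes at most $X$ per tuple $\tupleh$) and pairing $h_d > 0$ with its conjugate counterpart $h_d < 0$ to extract a real part, one obtains
\begin{equation*}
\sum_{\tupleh} \biggl| \sum_x e\bigl(\Delta_{\tupleh}^{(d-1)} \phi(x)\bigr)\biggr|^2 \ll X^d + |T_d|.
\end{equation*}
Substituting back delivers $|S|^{2D} \ll X^{2D-1} + X^{2D-d-1} |T_d|$, which is precisely the claim at stage $d+1$ upon writing $2D = 2^d$.

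\textbf{Main obstacle.} The delicate point is that the statement features a \emph{single} absolute value enveloping the entire nested sum over $(h_1,\ldots,h_{d-1},x)$, rather than a sum of absolute values --- triangle inequality alone pushes in the wrong direction. The remedy is to perform the summation $\sum_{h_1,\ldots,h_{d-1}}$ \emph{before} passing to the absolute value of $T_d$: the diagonal $h_d = 0$ is non-negative and uniformly bounded by $X^d$ regardless of sign cancellations, while the off-diagonal contribution, expressed as twice the real part of the complete differencing sum $T_d$, is then trivially controlled by $|T_d|$ only at the very last step of the argument. One must also keep track of the nested ranges of $x$, which contract to $1 \leq x \leq X - Y_t$ after $t$ successive differencings, matching exactly the constraint $1 \leq x < x + Y_{d-1} \leq X$ in the statement.
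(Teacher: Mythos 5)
The paper does not prove this lemma itself; it simply cites \cite[Lemma 3.8]{baker_book_dioph_ineq}. Your inductive Weyl-differencing argument (expand the square, isolate the diagonal, apply Cauchy--Schwarz in the differencing variables, and identify $\Delta_{h_d}\Delta_{\tupleh}^{(d-1)} = \Delta_{(\tupleh,h_d)}^{(d)}$) is precisely the standard proof that the cited reference uses, the range bookkeeping $1 \leq x \leq X - Y_t$ works out, and the exponents $2D-1$, $2D-d-1$ match the claim at stage $d+1$, so the proof is correct and essentially identical to the one being cited.
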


	\begin{proof}
		See \cite[Lemma 3.8]{baker_book_dioph_ineq}.
	\end{proof}

 From now one we fix an index $i.$ By Lemma \ref{lem3.4.2}, and using the linearity of the forward difference operator one has
	\begin{equation*} \label{4_min_arc1_fi_diff_proc}
		\begin{split}	
			\left| 	F_i(\alpha_d, \alpha_\theta) \right|^{2^d} & \ll P^{2^d-1} + P^{2^d- (d+1)} \sum_{\tupleh} \left| \sum_{x } e \left( a_i \alpha_d  d !\ h_1 \cdots h_d + \lambda_i \alpha_\theta  \Delta_{\tupleh}^{(d)}( x^\theta ) \right) \right| \\[10pt]
			& \ll  P^{2^d-1} + P^{2^d- (d+1)} \sum_{\tupleh} \left| \sum_{x } e \left( \lambda_i \alpha_\theta  \Delta_{\tupleh}^{(d)}( x^\theta ) \right) \right|,
		\end{split}
	\end{equation*}
where in the second step we used the triangle inequality. In the above summation notation, we sum over tuples $ \tupleh$ satisfying $ 1 \leq \tupleh \leq P$ and $x$ belongs to a subinterval of $[1,P]$ determined by the shifts $h_1, \ldots, h_d.$ For convenience we denote this interval by $ I(\tupleh).$ 
	
We put
	\begin{equation} \label{eq3.4.6}
		S_i(\alpha_\theta, \tupleh ) = \sum_{ x \in I(\tupleh)}  e \left( \lambda_i \alpha_\theta  \Delta_{\tupleh}^{(d)}( x^\theta ) \right).
	\end{equation}
Hence, the above estimate now takes the shape
	\begin{equation} \label{eq3.4.7}
		\left| 	F_i(\alpha_d, \alpha_\theta) \right|^{2^d} \ll P^{2^d-1} + P^{2^d- (d+1)} \sum_{\tupleh}  \left| S_i(\alpha_\theta, \tupleh)  \right|.
	\end{equation}

One can split the summation over $ \tupleh$ based on the size of the product $ H= h_1 \cdots h_d.$ Consider the function $ \psi (P) = (\log P)^{-1}$ which decreases  monotonically to zero as $ P \to \infty$ and furthermore for large $P$ satisfies $ \psi(P) > P^{-\epsilon}$ for any fixed $ \epsilon > 0.$ We form a partition of the shape
	\begin{equation*}
			\displaystyle \left\{ (h_1, \ldots, h_d) : h_i \in [1,P] \cap \mathbb Z  \right\} = A_1 \cup A_2 \cup A_3,
	\end{equation*}
where we define the sets $A_1, A_2$ and $ A_3$ by
	\begin{equation*}
		\begin{split}
			& \displaystyle  A_1 = \left\{ (h_1, \ldots, h_d) : h_i \in [1,P] \cap \mathbb Z, \hspace{0.05in} P^d \psi (P) < H \leq  P^d  \right\}, \\[10pt]	
			& \displaystyle  A_2 = \left\{ (h_1, \ldots, h_d) : h_i \in [1,P] \cap \mathbb Z, \hspace{0.05in} P^{d-5^{-\theta}} < H \leq P^d \psi(P) \right\}, 	\\[10pt]
			& \displaystyle  A_3 = \left\{ (h_1, \ldots, h_d) : h_i \in [1,P] \cap \mathbb Z, \hspace{0.05in}  H \leq P^{d - 5^{-\theta}}  \right\}.
		\end{split}
	\end{equation*}
Moreover, for $ \kappa = 1,2,3 $ we define
	\begin{equation} \label{eq3.4.8}
		T_{\kappa} (\alpha_\theta) = \sum_{ \tupleh \in A_{\kappa} }  \left| S_i(\alpha_\theta, \tupleh)  \right|.
	\end{equation}
To avoid confusion in the following, let us observe that in order to reduce the notation, in the definition of $T_\kappa (\alpha_\theta)$ we omit the dependence on $i.$ One can now write
	\begin{equation*}
		\begin{split}	
			\displaystyle \sum_{\tupleh} \left| S_i(\alpha_\theta, \tupleh ) \right| \ll T_1 (\alpha_\theta) + T_2 (\alpha_\theta) + T_3 (\alpha_\theta).
		\end{split}
	\end{equation*}
Invoking (\ref{eq3.4.7}) we deduce that
	\begin{equation} \label{eq3.4.9}
		\displaystyle  \left| 	F_i(\alpha_d, \alpha_\theta) \right|^{2^d} \ll P^{2^d -1} + P^{2^d -(d+1)} \left( T_1 (\alpha_\theta) + T_2 (\alpha_\theta) + T_3 (\alpha_\theta) \right).
	\end{equation}

Our aim now is to obtain a non-trivial upper bound for the exponential sum $ S_i (\alpha_\theta) $ with $ \alpha_\theta \in \mathfrak m^{+}.$ To do so, we make use of van der Corput's $k$-th derivative test for bounding exponential sums. 
	
	\begin{lemma} \label{lem3.4.3}
		Let $q \geq 0$ be an integer. Suppose that $ f : (X, 2X] \to \mathbb R$ is a function having continuous derivatives up to the $(q+2)$-th order in $(X,2X].$ Suppose also there is some $F > 0,$  such that for all $ x \in (X, 2X]$ we have
			\begin{equation} \label{eq3.4.10}
				F X^{-r} \ll | f^{(r)} (x) | \ll FX^{-r},
			\end{equation}
		for $r =1, 2, \ldots, q+2.$ Then we have
			\begin{equation*}
			\sum_{X < x \leq 2X} e(f(x)) \ll F^{1/ (2^{q+2}-2)} X^{1 - (q+2)/(2^{q+2}-2)} + F^{-1}X,
		\end{equation*}
		with the implied constant depending only upon the implied constants in (\ref{eq3.4.10}).
	\end{lemma}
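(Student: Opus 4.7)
The plan is to prove Lemma \ref{lem3.4.3} by induction on $q\geq 0$, the base case being the classical van der Corput second derivative test and the inductive step an application of the Weyl--van der Corput differencing process (the so-called $A$-process).

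For the base case $q=0$, the hypothesis gives $|f''(x)|\asymp FX^{-2}$ throughout $(X,2X]$. Here I would invoke the classical second derivative estimate (see e.g.\ \cite[Theorem 5.9]{baker_book_dioph_ineq} applied with $k=2$ or, alternatively, Kusmin--Landau combined with partial summation and the monotonicity of $f'$ provided by the one-sided bound on $f''$). This delivers a bound of the shape $F^{1/2}+F^{-1/2}X$, which in particular is dominated by the required expression $F^{1/(2^{2}-2)}X^{1-2/(2^{2}-2)}+F^{-1}X$ in the relevant regime $F\gg 1$ and trivially in the regime $F\ll 1$ (where the conclusion is weaker than the bound $X$ obtained by the triangle inequality).

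For the inductive step, assume the lemma for $q-1$ and let $f$ satisfy the hypotheses for $q$. For any $1\leq H\leq X$, the Weyl--van der Corput inequality yields
\begin{equation*}
\left|\sum_{X<n\leq 2X}e(f(n))\right|^{2}\ll \frac{X^{2}}{H}+\frac{X}{H}\sum_{h=1}^{H}\left|\sum_{n\in I_{h}}e\bigl(f(n+h)-f(n)\bigr)\right|,
\end{equation*}
where $I_{h}\subset (X,2X-h]$ is the relevant subinterval. Set $g_{h}(x)=f(x+h)-f(x)$. By the mean value theorem applied coordinate-wise, and using the two-sided hypothesis $|f^{(r)}(x)|\asymp FX^{-r}$ for $r=1,\dots,q+2$, one obtains $|g_{h}^{(r)}(x)|\asymp hFX^{-(r+1)}$ for $1\leq r\leq q+1$ and all $x\in I_{h}$, provided $h\leq X/2$ say, so that $I_{h}$ retains length $\asymp X$. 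Thus $g_{h}$ satisfies the hypotheses of the lemma with parameters $(q-1,F')$ in place of $(q,F)$, where $F'=hFX^{-1}$. Applying the inductive hypothesis to each inner sum, summing over $h\leq H$, substituting back, taking square roots, and optimising the choice of $H$ then produces the claimed bound.

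The main obstacle will be the careful bookkeeping of the exponents through the induction: one needs to verify that the optimal choice of $H$ at each stage, together with the recursive dependence $F'=hFX^{-1}$ and the decrement of $q$, telescopes exactly to the exponent $1/(2^{q+2}-2)$ and $1-(q+2)/(2^{q+2}-2)$ in the final bound. A minor but necessary additional point is the boundary case when $H$ chosen by optimisation exceeds the permitted range $[1,X]$; there one falls back on the trivial bound $\sum e(f(n))\ll X$, which is absorbed into the term $F^{-1}X$ precisely in the regime where $F$ is too small for the main term to dominate. Since this lemma is classical and appears in essentially this form in \cite{baker_book_dioph_ineq}, I would in practice cite it there rather than reproduce the full inductive argument.
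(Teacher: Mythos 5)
The paper's entire proof is a citation to Graham and Kolesnik, Theorem~2.9; your instinct to cite a standard reference is thus the same as the paper's (you point to Baker's monograph instead, and either source serves).

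Your reconstruction of the argument, however, has a genuine gap in the base case which would also propagate through the induction. The classical second derivative test uses only $|f''(x)| \asymp FX^{-2}$ and delivers the secondary term $F^{-1/2}X$, as you write. But the domination $F^{-1/2}X \ll F^{1/2} + F^{-1}X$ that you assert for $F\gg 1$ is false throughout $1 \ll F \ll X$: there one has both $F^{-1}X < F^{-1/2}X$ (since $F>1$) and $F^{1/2} < F^{-1/2}X$ (since $F<X$), so your bound is strictly weaker than the claimed one. The sharper secondary term $F^{-1}X$ cannot be extracted from the second derivative test alone; it comes from the lower bound on $f'$, which the hypothesis supplies for $r=1$ but which the second derivative test never invokes. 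When $F\ll X$ one has $\|f'(x)\| = |f'(x)| \asymp FX^{-1}$ uniformly on $(X,2X]$ with $|f'|<1/2$, so Kusmin--Landau gives $\sum_{X<n\leq 2X} e(f(n)) \ll (FX^{-1})^{-1} = F^{-1}X$ directly. The correct organisation is therefore a dichotomy: dispose of $F\ll X$ once and for all by the first derivative test, and only then run the $A$-process in the complementary regime $F\gg X$, where $F^{-1}X\ll 1$ is trivially absorbed. Running your induction without this first-derivative input does not self-correct, because the secondary contributions $(F')^{-1}X$ from the inner sums with $F' = hFX^{-1}$ small (small $h$) feed through the optimisation of $H$ to produce a secondary term with an exponent on $F$ strictly greater than $-1$, rather than the required $F^{-1}X$.
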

	
	\begin{proof}
		See \cite[Theorem 2.9]{graham_kolesnik_book}.
	\end{proof}

We now make some observations that set the ground for an application of Lemma \ref{lem3.4.3}. It is convenient to work with an exponential sum over a dyadic interval. Recall from (\ref{eq3.4.6}) that
	\begin{equation*}
			S_i(\alpha_\theta, \tupleh ) = \sum_{ x \in I(\tupleh)}  e \left( \lambda_i \alpha_\theta  \Delta_{\tupleh}^{(d)}( x^\theta ) \right).
	\end{equation*}
One can split the interval $ I(\tupleh)$ into $ O \left( \log P \right) $ dyadic intervals. By making abuse of notation one then has
	\begin{equation*}
		 \left| S_i (\alpha_\theta, \tupleh) \right| \ll \log P \sum_{ P < x \leq 2P} e \left( \lambda_i \alpha_\theta  \Delta_{\tupleh}^{(d)}( x^\theta ) \right).
	\end{equation*}
Put
	\begin{equation*}
		 \widetilde S_i (\alpha_\theta, \tupleh) =  \sum_{ P < x \leq 2P} e \left( \lambda_i \alpha_\theta  \Delta_{\tupleh}^{(d)}( x^\theta ) \right).
	\end{equation*}
Hence for all $ \alpha_\theta$ and for any fixed $ \epsilon > 0$ one has
	\begin{equation} \label{eq3.4.11}
		\left| S_i (\alpha_\theta, \tupleh) \right| \ll P^\epsilon \left| \widetilde S_i (\alpha_\theta, \tupleh) \right|.
	\end{equation}	
It is apparent that an upper bound for the exponential sum $ \widetilde S_i (\alpha_\theta )$ leads to an upper bound for the exponential sum $S_i (\alpha_\theta )$ with an $\epsilon$- loss. This is enough for our purpose. Observe that invoking Lemma \ref{lem3.4.1} with $t=d$ one has for each natural number $ r \geq 1 $ that
	\begin{equation*}
		\left| \frac{ \text d^r}{ \text d x^r} \left( \lambda_i \alpha_\theta  \Delta_{\tupleh}^{(d)}( x^\theta )  \right) \right| \asymp FP^{-r},
	\end{equation*}
where $ F = |\lambda_i C_r C_{r,d}| | \alpha_\theta|  H P^{\theta -d}.$  Recall here that $\mathfrak m^{+} = \left[ P^{-\theta + \delta_0}, P^\omega \right).$ 

	\begin{lemma} \label{lem3.4.4}
		Suppose that $ P^{d - 5^{-\theta}} < H \leq P^d.$ For each index $ i$ and for any $ \alpha_\theta \in \mathfrak m^{+}$ one has that 
			\begin{equation*}
				 \left| S_i (\alpha_\theta, \tupleh) \right| \ll P^{1 - 4^{-\theta}}.
			\end{equation*}
	\end{lemma}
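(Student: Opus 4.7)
My plan is to apply van der Corput's $k$-th derivative test (Lemma \ref{lem3.4.3}) to the exponential sum
\begin{equation*}
\widetilde S_i(\alpha_\theta, \tupleh) = \sum_{P < x \leq 2P} e\left(\lambda_i \alpha_\theta \Delta_\tupleh^{(d)}(x^\theta)\right),
\end{equation*}
since by (\ref{eq3.4.11}) any bound on $\widetilde S_i$ controls $S_i(\alpha_\theta, \tupleh)$ up to a factor of $P^\epsilon$. Setting $\phi(x) = \lambda_i \alpha_\theta \Delta_\tupleh^{(d)}(x^\theta)$, Lemma \ref{lem3.4.1} yields $|\phi^{(r)}(x)| \asymp_r F P^{-r}$ uniformly for $x \in (P, 2P]$ and every $r \geq 1$, where $F = |\lambda_i \alpha_\theta|\, H\, P^{\theta - d}$. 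This is precisely the derivative hypothesis of Lemma \ref{lem3.4.3}.

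Under the constraints $\alpha_\theta \in \mathfrak m^{+}$ and $P^{d - 5^{-\theta}} < H \leq P^d$ one readily computes
\begin{equation*}
P^{\delta_0 - 5^{-\theta}} \ll F \ll P^{\theta + \omega}.
\end{equation*}
I would apply Lemma \ref{lem3.4.3} with the choice $q + 2 = \lfloor 2\theta \rfloor$; this is permissible because $\theta > 3$ forces $q \geq 0$, and it yields simultaneously $2^{q+2} - 2 \leq 4^\theta$ and $q + 2 - \theta \geq \theta - 1 \geq 2$. The choice is engineered to make the exponent of $P$ in the first van der Corput term drop by roughly $4^{-\theta}$.

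Indeed, substituting $F \ll P^{\theta + \omega}$ into the first term bounds its exponent by
\begin{equation*}
1 - \frac{(q+2) - \theta - \omega}{2^{q+2} - 2} \leq 1 - \frac{\theta - 1 - \omega}{4^\theta},
\end{equation*}
so that the gain is at least $(\theta - 1 - \omega)/4^\theta \geq 2 \cdot 4^{-\theta}$, once $\theta \geq 3$ and $\omega$ is as small as (\ref{eq3.2.7}) dictates. For the secondary term, the lower bound on $F$ combined with $\delta_0 = 2 \cdot 4^{-\theta}$ delivers
\begin{equation*}
F^{-1} P \ll P^{1 - (2 \cdot 4^{-\theta} - 5^{-\theta})},
\end{equation*}
whose exponent also falls below $1 - 4^{-\theta}$ since $4^{-\theta} > 5^{-\theta}$. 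Consequently $\widetilde S_i \ll P^{1 - c \cdot 4^{-\theta}}$ for some absolute $c > 1$, and this extra slack absorbs the $P^\epsilon$ loss from (\ref{eq3.4.11}) to give the claimed estimate.

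In my view the main technical obstacle is the uniform calibration of $q$: one needs $q + 2 - \theta$ sizeable compared to $4^{-\theta}(2^{q+2} - 2)$ so the first term loses at least $4^{-\theta}$ in its exponent, while the lower bound $F \gg P^{\delta_0 - 5^{-\theta}}$ must simultaneously dominate $4^{-\theta}$ to control the secondary term. The numerical inequalities $\delta_0 - 5^{-\theta} > 4^{-\theta}$ and $\lfloor 2\theta \rfloor - \theta \geq \theta - 1$ are what make the single choice $q + 2 = \lfloor 2\theta \rfloor$ work uniformly across all admissible $\theta$, and mild variations (for instance $q + 2 = \lceil \theta \rceil$) would fail in the regime where the fractional part of $\theta$ approaches $1$.
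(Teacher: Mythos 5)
Your proof is correct and follows essentially the same route as the paper — reducing to $\widetilde S_i$ via (\ref{eq3.4.11}) and applying van der Corput's Lemma \ref{lem3.4.3} with the derivative asymptotics of Lemma \ref{lem3.4.1} — except that you take $q + 2 = \lfloor 2\theta\rfloor$ where the paper takes $q + 2 = \lfloor\theta\rfloor + 2$, and you avoid the paper's case split on whether $H$ exceeds $P^d \psi(P)$. Both simplifications are legitimate; the only slight overstatement is that $(\theta - 1 - \omega)/4^\theta \geq 2\cdot 4^{-\theta}$ would need $\theta \geq 3 + \omega$, which $\theta > 3$ alone does not quite furnish, but this is harmless since all that is required is a gain strictly exceeding $4^{-\theta}$ and $\theta - 1 - \omega > 2 - \omega$ supplies that with ample room.
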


	\begin{proof}
		 Note that it is enough to show that for all $ \alpha_\theta \in \mathfrak m^{+}$ one has
		 	\begin{equation*}
		 		\left| \widetilde S_i (\alpha_\theta, \tupleh) \right| \ll P^{1 - \sigma},
		 	\end{equation*}
		 for some $ \sigma > 4^{-\theta}.$ Then returning in (\ref{eq3.4.11}) and taking $ \epsilon = \sigma - 4^{-\theta}>0$ as we are at liberty to do, yields the desired conclusion. We consider two separate cases depending on the size of $H.$
		 
		 Suppose first that $ P^d \psi (P) < H \leq P^d.$ Then one has
			\begin{equation*}
				P^{\delta_0} \psi (P) \ll F \ll P^{\theta + \omega}.
			\end{equation*}
		We may now apply Lemma \ref{lem3.4.3} with $ q = n,$ where temporarily we write $ n =\lfloor \theta \rfloor .$ This reveals that for any $ \alpha_\theta \in \mathfrak m^{+}$ one has
			\begin{equation*}
				\left| \widetilde S_i (\alpha_\theta, \tupleh)  \right|  \ll P^{ 1 - \sigma} + P^{1 - \delta_0} \psi(P)^{-1},
			\end{equation*} 
		where 
			\begin{equation} \label{eq3.4.12}
				\sigma = \frac{n +2 - \theta - \omega}{2^{n+2}-2}.
			\end{equation}
	 Recalling (\ref{eq3.2.7}) one can verify that for $ \theta > d +1 \geq 3$ one has
	 	\begin{equation*}
	 		\sigma > \frac{1}{3^\theta + 6} > \frac{1}{4^{\theta}}.
	 	\end{equation*}
	 Moreover, recalling that $ \psi (P) = (\log P)^{-1}$ one has $ \psi(P)^{-1} \ll P^{10^{-\theta}},$ which yields
	 	\begin{equation*}
	 		P^{1 - \delta_0} \psi(P)^{-1} \ll P^{ 1 - \delta_0 + 10^{-\theta} }.
	 	\end{equation*}
	Hence, the previous estimate for the exponential sum $ \widetilde S_i (\alpha_\theta )$ delivers
		\begin{equation*}
				\left| \widetilde S_i (\alpha_\theta, \tupleh )  \right| \ll P^{1 - \sigma^\prime},
		\end{equation*}	
	where $ \sigma^\prime = \min \{ \sigma, \hspace{0.02in} \delta_0 - 10^{-\theta} \} > 4^{-\theta}$ and we are done.
	
	Suppose now that $ P^{d- 5^{-\theta}} < H \leq P^d \psi (P). $ In this case one has
			\begin{equation*}
				P^{\delta_0 - 5^{-\theta}} \ll F \ll P^{\theta + \omega} \psi (P).
			\end{equation*}
		Applying again Lemma \ref{lem3.4.3} with $ q = n,$ yields that for any $ \alpha_\theta \in \mathfrak m^{+}$  one has
			\begin{equation*}
				\left| \widetilde S_i (\alpha_\theta, \tupleh )  \right|  \ll P^{1 - \sigma } \left( \psi (P) \right)^{1/ (2^{n+2}-2)} + P^{1 - \delta_0 + 5^{-\theta}},
			\end{equation*}
		with $\sigma$ as in (\ref{eq3.4.12}). For large $P$ one may assume that $ \psi (P) < 1.$ Recalling again from (\ref{eq3.2.7}) that $ \delta_0 = 2^{1- 2\theta}$ the above estimate delivers 
			\begin{equation*}
				\left| \widetilde S_i (\alpha_\theta, \tupleh )  \right|  \ll P^{1 - \sigma^\prime},
			\end{equation*}
		where now we write $ \sigma^\prime = \min  \{ \sigma , \hspace{0.02in} \delta_0 - 5^{-\theta} \} > 4^{-\theta}.$ Thus the proof is now complete.
	\end{proof}

We can now estimate the sums $T_\kappa (\alpha_\theta) \hspace{0.05in} (1 \leq \kappa \leq 3)$ defined in (\ref{eq3.4.8}).

	\begin{lemma} \label{lem3.4.5}
		For each index $i$ and for any $ \alpha_\theta \in \mathfrak m^{+}$ one has that
			\begin{itemize}
				\item[(i)] $ \left| T_1 (\alpha_\theta) \right| \ll P^{d+1 - 5^{-\theta}}$; 
				\item [(ii)] $ \left| T_2(\alpha_\theta) \right| \ll  P^{ d + 1 - 5^{-\theta}} \psi(P)$;
				\item [(iii)] $ \left| T_3(\alpha_\theta) \right| \ll  P^{ d +1- 6^{-\theta}} .$
			\end{itemize}
	\end{lemma}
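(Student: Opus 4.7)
The plan is to combine the pointwise bound on $|S_i(\alpha_\theta, \tupleh)|$ supplied by Lemma \ref{lem3.4.4} with a careful count of the cardinality of the sets $A_1, A_2, A_3$. The only truly interesting point is how the various logarithmic factors coming from divisor sums are absorbed by the different power savings; this is exactly what forces the three exponents $4^{-\theta}, 5^{-\theta}, 6^{-\theta}$ to appear in successively weaker form.

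For $T_1$ and $T_2$ every tuple $\tupleh \in A_1 \cup A_2$ satisfies $H > P^{d-5^{-\theta}}$, so Lemma \ref{lem3.4.4} is directly applicable and yields the uniform bound $|S_i(\alpha_\theta, \tupleh)| \ll P^{1-4^{-\theta}}$ for $\alpha_\theta \in \mathfrak m^{+}$. Hence the task reduces to counting tuples. For $T_1$ the crude estimate $|A_1| \leq P^d$ suffices: multiplying by $P^{1-4^{-\theta}}$ yields $|T_1(\alpha_\theta)| \ll P^{d+1-4^{-\theta}}$, which is comfortably stronger than the target $P^{d+1-5^{-\theta}}$. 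For $T_2$ I would exploit the constraint $H \leq P^d \psi(P)$ via the standard divisor estimate $\sum_{m\leq N} d_d(m) \ll N(\log N)^{d-1}$ applied with $N = P^d \psi(P)$, giving $|A_2| \ll P^d \psi(P)(\log P)^{d-1}$. The resulting bound is
\[
|T_2(\alpha_\theta)| \ll P^d \psi(P)(\log P)^{d-1} \cdot P^{1-4^{-\theta}},
\]
and the extra $(\log P)^{d-1}$ is absorbed into the difference between the power savings $4^{-\theta}$ and $5^{-\theta}$ for sufficiently large $P$, yielding the claimed $P^{d+1-5^{-\theta}} \psi(P)$.

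For $T_3$ the pointwise bound of Lemma \ref{lem3.4.4} is unavailable, since the condition $H > P^{d-5^{-\theta}}$ is violated. I would therefore revert to the trivial bound $|S_i(\alpha_\theta,\tupleh)| \leq P$. Applying the same divisor estimate to the constraint $H \leq P^{d-5^{-\theta}}$ yields $|A_3| \ll P^{d-5^{-\theta}}(\log P)^{d-1}$, and multiplying by $P$ produces $|T_3(\alpha_\theta)| \ll P^{d+1-5^{-\theta}}(\log P)^{d-1}$; the logarithmic factor is absorbed by the further weakening of the saving from $5^{-\theta}$ to $6^{-\theta}$.

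The main obstacle, such as it is, lies in tracking which of the three available savings is spent on absorbing the $(\log P)^{d-1}$ factor produced by the $d$-fold divisor sum: one saving is spent in passing from $A_2$ to the bound for $T_2$ (hence $4^{-\theta} \to 5^{-\theta}$), and another in the trivial-bound treatment of $T_3$ (hence $5^{-\theta} \to 6^{-\theta}$). No new ideas beyond Lemma \ref{lem3.4.4} and elementary divisor bounds are needed; the bookkeeping is the whole content of the lemma.
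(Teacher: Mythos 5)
Your proposal is correct and takes essentially the same route as the paper: bound $|S_i(\alpha_\theta,\tupleh)|$ pointwise via Lemma \ref{lem3.4.4} on $A_1,A_2$ and trivially on $A_3$, then multiply by a cardinality estimate for $A_\kappa$ and absorb the surplus into the gap between $4^{-\theta}$, $5^{-\theta}$ and $6^{-\theta}$. The only cosmetic difference is that you use the trivial $|A_1|\leq P^d$ and explicit $(\log P)^{d-1}$ divisor-sum bounds, whereas the paper uniformly bounds $\#A_\kappa\ll X_\kappa P^\epsilon$ via $\sum_{H\leq X_\kappa}\tau_d(H)$ and then chooses $\epsilon$ small; both yield the stated exponents.
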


	\begin{proof}
			For each $\kappa =1,2,3$ we write $ \# A_\kappa$ to denote the cardinality of the set $ A_\kappa.$ We set
				\begin{equation*}
					X_1= P^d, \hspace{0.3in} X_2 = P^d \psi (P), \hspace{0.3in} X_3 = P^{d - 5^{-\theta}}.
				\end{equation*}
			Observe that for each $\kappa =1,2,3$ and for any fixed $ \epsilon> 0$ one has
				\begin{equation*}
					\# A_\kappa \ll \sum_{ H \leq X_\kappa} \tau_d (H) \ll X_\kappa P^\epsilon,
				\end{equation*}
			where recall that $ H = h_1 \cdots h_d$ and $ \tau_d$ is the $d$-fold divisor function.
			
			One can get an upper bound for each $T_\kappa (\alpha_\theta)$ by using the above observation together with the bound supplied by Lemma \ref{lem3.4.4}. Let us demonstrate this by proving estimate $(i).$ Recall here that
				\begin{equation*}
					T_1 (\alpha_\theta) = \sum_{ \tupleh \in A_1} \left| S_i(\alpha_\theta) \right|,
				\end{equation*}
			where 
				\begin{equation*} 
					A_1 = \{ (h_1, \ldots, h_d) : h_i \in [1,P] \cap \mathbb Z, \hspace{0.05in} P^d \psi (P) < H \leq P^d \}.
				\end{equation*}
			Invoking Lemma \ref{lem3.4.4} one has for any $ \alpha_\theta \in \mathfrak m^{+}$ and any fixed $ \epsilon > 0$ that
				\begin{equation*}
				 	\left| T_1(\alpha_\theta) \right| \ll \left( \sup_{\substack{ \alpha_\theta \in \mathfrak m^{+} \\ \tupleh \in A_1 } } \left| S_i (\alpha_\theta, \tupleh) \right| \right) \sum_{ \tupleh \in A_1} 1 \ll P^{1-4^{-\theta}} \left(\#A_1\right) \ll  P^{d+1 - 4^{-\theta} +\epsilon }.
				\end{equation*}
			Pick now a sufficiently small $ 0< \epsilon < 4^{-\theta} - 5^{-\theta}$ to deduce that for any $ \alpha_\theta \in \mathfrak m^{+}$ one has
				\begin{equation*}
					\left| T_1(\alpha_\theta) \right| \ll P^{d+1 - 5^{-\theta}}.
				\end{equation*}	
		
		Similarly we argue to estimate the sums $T_2 (\alpha_\theta)$ and $T_3(\alpha_\theta).$ For the sake of clarity, let us mention that in estimating $T_3 (\alpha_\theta)$ one can use the trivial bound 
			\begin{equation*}
				 \left| S_i (\alpha_\theta, \tupleh)  \right| \ll P,
			\end{equation*}
		which is always valid. With this observation the proof of the lemma is now complete.
	\end{proof}

By Lemma \ref{lem3.4.5} it is apparent that for each index $i$ and for any $ \alpha_\theta \in \mathfrak m^{+}$ one has
	\begin{equation*}
		\left| T_\kappa (\alpha_\theta) \right| \ll P^{d+1 - 6^{-\theta}} \hspace{0.5in} (\kappa =1,2,3).
	\end{equation*}
One can now use the above estimate in order to bound from above the right hand side of (\ref{eq3.4.9}). Hence we deduce that
	\begin{equation*}
		\left| F_i (\alpha_d, \alpha_\theta) \right| \ll P^{1 - 1/2^d} + P^{1 - 1/ (2^d \cdot 6^\theta)} \ll P^{1 - 6^{-\theta-d}}.
	\end{equation*}
Upon recalling (\ref{eq3.2.5}) we have proved the following.  

	\begin{lemma} \label{lem3.4.6}
		For each index $i$ and for any $ (\alpha_d, \alpha_\theta) \in [0,1) \times \mathfrak m^{+} $ one has that
			\begin{equation*}
				\left| f_i (\alpha_d, \alpha_\theta) \right| \ll P^{1 - 6^{-\theta-d} }.
			\end{equation*}
	\end{lemma}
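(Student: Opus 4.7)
The plan is to reduce the estimate on $f_i$ to one on $F_i$ and then collect together the ingredients already developed in this subsection. By the relation \eqref{eq3.2.5}, the sum $f_i(\alpha_d, \alpha_\theta)$ is the difference of two copies of $F_i(\alpha_d, \alpha_\theta; \cdot)$ evaluated at upper endpoints of order $P$. So up to a constant depending on $x_i^\star$, it suffices to prove the analogous bound for $F_i(\alpha_d, \alpha_\theta; P')$ with $P' \asymp P$, and the argument for $F_i$ is uniform in the choice of such $P'$.

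The starting point is Weyl's inequality (Lemma \ref{lem3.4.2}), applied $d$ times, combined with the linearity of the difference operator. The crucial point is that after $d$ applications of $\Delta_h$ to the classical-degree part $a_i \alpha_d x^d$, one is left with $a_i \alpha_d \, d!\, h_1 \cdots h_d$, a quantity that does not depend on $x$. Hence the $\alpha_d$-dependence disappears entirely after differencing, and by the triangle inequality one arrives at \eqref{eq3.4.9}:
\begin{equation*}
|F_i(\alpha_d, \alpha_\theta)|^{2^d} \ll P^{2^d - 1} + P^{2^d - (d+1)}\bigl(T_1(\alpha_\theta) + T_2(\alpha_\theta) + T_3(\alpha_\theta)\bigr),
\end{equation*}
where the $T_\kappa$ encode the sum of $|S_i(\alpha_\theta, \tupleh)|$ across the three ranges of $H = h_1 \cdots h_d$.

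Now I would invoke Lemma \ref{lem3.4.5}, which supplies the uniform bounds $|T_\kappa(\alpha_\theta)| \ll P^{d+1 - 6^{-\theta}}$ for $\alpha_\theta \in \mathfrak{m}^+$. Plugging these into the display above gives $|F_i(\alpha_d, \alpha_\theta)|^{2^d} \ll P^{2^d - 1} + P^{2^d - 6^{-\theta}}$. Taking the $2^d$-th root yields
\begin{equation*}
|F_i(\alpha_d, \alpha_\theta)| \ll P^{1 - 1/2^d} + P^{1 - 1/(2^d \cdot 6^\theta)} \ll P^{1 - 6^{-\theta - d}},
\end{equation*}
and returning via \eqref{eq3.2.5} finishes the proof.

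The real work has been carried out upstream. The main technical obstacle is Lemma \ref{lem3.4.4}, whose proof relies on van der Corput's $(q+2)$-th derivative test applied to the phase $\lambda_i \alpha_\theta \Delta_{\tupleh}^{(d)}(x^\theta)$; the derivative asymptotics supplied by Lemma \ref{lem3.4.1} are precisely what license this application, and the careful calibration of the parameters $\delta_0$ and $\omega$ in \eqref{eq3.2.7} ensures that the resulting savings beat $P^{1 - 4^{-\theta}}$ across all three ranges of $H$. Once those bounds are in hand, the proof of Lemma \ref{lem3.4.6} itself is essentially bookkeeping: collect the $T_\kappa$ estimates, insert them into the Weyl bound, and take a root.
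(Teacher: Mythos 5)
Your proposal is correct and follows essentially the same route as the paper: reduce $f_i$ to $F_i$ via (\ref{eq3.2.5}), obtain (\ref{eq3.4.9}) by Weyl differencing so that the $\alpha_d$-dependence disappears, insert the uniform bound $|T_\kappa(\alpha_\theta)| \ll P^{d+1-6^{-\theta}}$ from Lemma \ref{lem3.4.5}, and take the $2^d$-th root. The only cosmetic remark is that Lemma \ref{lem3.4.2} is invoked once with the parameter $d+1$ (so $D = 2^d$, yielding $d$-fold differencing) rather than ``applied $d$ times,'' but this is a bookkeeping phrasing and the mathematics is identical.
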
 

Equipped with all the necessary auxiliary estimates, we may now finish up the first part of the minor arcs analysis. We now set
	\begin{equation*}
		\eta_1 = 6^{-\theta-d} \hspace{0.5in} \text{and} \hspace{0.5in} \kappa = P^\omega.
	\end{equation*}
Note that for large enough $P$ one has $ \min_{i,j} \{\kappa | \lambda_i|, \hspace{0.05in} \kappa | \mu_j| \} \geq 1.$ Recall from (\ref{eq3.2.11}) that one has $ s^\prime  = s - \delta$ and recall as well from (\ref{eq3.2.15}) that $ s^\prime = A_\theta + (1- \omega_1)A_d.$ One can now use  Lemma \ref{lem3.4.6} and Lemma \ref{lem3.3.6} in order to estimate the right hand side of the inequality (\ref{eq3.2.14}). Hence, we infer that for any fixed $ \epsilon > 0$ one has
	\begin{equation*}
		\int_{\mathfrak m^{+}} \int_0^1 \left| \mathcal F(\tuplealpha) K_\pm (\alpha_\theta) \right| \text d \tuplealpha \ll P^{(1-\eta_1 ) \delta} P^{ s^\prime -  (\theta+ d) + \omega + \epsilon} \ll P^{ s - (\theta +d) - \eta_1 \delta + \omega + \epsilon}.
	\end{equation*}
Recall from( \ref{eq3.2.7}) that $ \omega \leq 5^{-100(\theta+d)}.$ One may choose 
	\begin{equation*}
		\delta = 6^{-\theta} \in (0,1/3) \hspace{0.3in} \text{and} \hspace{0.3in} \epsilon = 5^{-100(\theta+d)} ,
	\end{equation*}
as we are at liberty to do. With these choices for $ \delta$ and $ \epsilon$ it is clear that $ - \eta_1 \delta + \omega +  \epsilon < 0.$ Thus the above estimate delivers
	\begin{equation*}
			\int_{\mathfrak m^{+}} \int_0^1 \left| \mathcal F(\tuplealpha) K_\pm (\alpha_\theta) \right| \text d \tuplealpha  = o \left( P^{s - (\theta + d)} \right).
	\end{equation*}
In the light of (\ref{eq3.4.2}) we have established the following.

	\begin{lemma} \label{lem3.4.7}
		For  $ s_{\normalfont \min} \leq s \leq s_{\normalfont \max}$ one has
			\begin{equation*}
				\int_{\mathfrak m} \int_0^1 \left| \mathcal F (\tuplealpha) K_\pm (\alpha_\theta) \right| \normalfont \text d \tuplealpha = o \left( P^{s - (\theta +d)} \right).
			\end{equation*}
	\end{lemma}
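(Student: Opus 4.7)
The plan is to combine the pointwise minor-arc bound for $f_i$ supplied by Lemma \ref{lem3.4.6} with the auxiliary mean-value estimates of Lemma \ref{lem3.3.6}, inserted into the H\"older-type inequality (\ref{eq3.2.14}). First, by the change of variables (\ref{eq3.4.1}) and the identity (\ref{eq3.4.2}), it suffices to establish the claim with $\mathfrak m$ replaced by the positive piece $\mathfrak m^{+} = [P^{-\theta+\delta_0}, P^{\omega})$, since the integral over $\mathfrak m^{-}$ equals the complex conjugate of the integral over $\mathfrak m^{+}$.

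Next, I would set $\mathcal B = [0,1) \times \mathfrak m^{+}$ and take $\kappa = P^{\omega}$. For $P$ large enough one has $\min_{i,j}\{\kappa|\lambda_i|, \kappa|\mu_j|\} \geq 1$, so Lemma \ref{lem3.3.6} is applicable and yields, for any fixed $\epsilon > 0$, the four bounds $\Xi_{f_i}(\mathcal B) \ll \kappa P^{A_\theta - (\theta+d)+\epsilon}$ and $\Xi_{f_i,g_j}(\mathcal B), \Xi_{f_i,h_k}(\mathcal B), \Xi_{g_j,h_k}(\mathcal B) \ll \kappa P^{A_\theta + A_d - (\theta+d)+\epsilon}$. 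Writing $\eta_1 = 6^{-\theta-d}$ and using Lemma \ref{lem3.4.6} to control the supremum factor, the inequality (\ref{eq3.2.14}) gives
\begin{equation*}
\int_{\mathcal B} |\mathcal F(\tuplealpha) K_\pm(\alpha_\theta)|\,\text d\tuplealpha \ll P^{(1-\eta_1)\delta}\left(\kappa P^{A_\theta - (\theta+d)+\epsilon}\right)^{\omega_1}\left(\kappa P^{A_\theta + A_d - (\theta+d)+\epsilon}\right)^{\omega_2+\omega_3+\omega_4}.
\end{equation*}
Since $\omega_1 + \omega_2 + \omega_3 + \omega_4 = 1$ and $\omega_1 A_\theta + (\omega_2+\omega_3+\omega_4)(A_\theta+A_d) = A_\theta + (1-\omega_1)A_d = s^\prime$ by the system solved in (\ref{eq3.2.15}), the exponent on $P$ collapses to $(1-\eta_1)\delta + s^\prime - (\theta+d) + \omega + \epsilon$. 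Recalling $s^\prime = s - \delta$ from (\ref{eq3.2.11}), this is $s - (\theta+d) - \eta_1 \delta + \omega + \epsilon$.

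Finally, I choose $\delta = 6^{-\theta} \in (0,1/3)$ and $\epsilon = 5^{-100(\theta+d)}$, both admissible. Since $\omega \leq 5^{-100(\theta+d)}$ by (\ref{eq3.2.7}), the quantity $-\eta_1\delta + \omega + \epsilon = -6^{-2\theta-d} + 2\cdot 5^{-100(\theta+d)}$ is strictly negative, and so the above integral is $o(P^{s-(\theta+d)})$. Combining this with the symmetry reduction completes the argument. The bookkeeping is essentially routine; the only mild obstacle is verifying that the algebraic identity behind $\omega_1 A_\theta + (\omega_2+\omega_3+\omega_4)(A_\theta+A_d) = s^\prime$ follows cleanly from the solutions (\ref{eq3.2.15}), but this is immediate from $\omega_1 + \omega_2 + \omega_3 + \omega_4 = 1$ and $(1-\omega_1) = (s^\prime - A_\theta)/A_d$.
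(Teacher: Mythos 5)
Your proof is correct and follows essentially the same route as the paper: reduce to $\mathfrak m^{+}$ by conjugation symmetry, take $\kappa = P^\omega$, feed Lemma \ref{lem3.4.6} and Lemma \ref{lem3.3.6} into the H\"older bound (\ref{eq3.2.14}), collapse the exponent via the weight identity from (\ref{eq3.2.15}), and then choose $\delta = 6^{-\theta}$ and $\epsilon = 5^{-100(\theta+d)}$ so that $-\eta_1\delta + \omega + \epsilon < 0$. The only tiny imprecision is writing $-\eta_1\delta + \omega + \epsilon = -6^{-2\theta-d} + 2\cdot 5^{-100(\theta+d)}$ rather than $\leq$, since $\omega$ may be strictly smaller than $5^{-100(\theta+d)}$; the conclusion is unaffected.
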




\subsection{Minor arcs: Part 2} 

In this subsection we consider the case where $ (\alpha_d, \alpha_\theta) \in \mathfrak n_\xi \times \mathfrak M .$ Let us recall here that $ \mathfrak n_\xi \subset [0,1)$ is a set of minor arcs in the classical sense and $ \mathfrak M = \left( - P^{-\theta + \delta_0}, P^{-\theta + \delta_0} \right).$ We put $ \mathfrak M^{+} = \left( 0, P^{-\theta + \delta_0} \right)$ and $ \mathfrak M^{-} = \left( - P^{-\theta + \delta_0}, 0 \right).$ Note that $ \mathfrak M = \mathfrak M^{+} \cup \mathfrak M^{-}.$  Making a change of variables as in (\ref{eq3.4.1}) yields
	\begin{equation} \label{eq3.4.13}
		R_\pm \left( P ; \mathfrak n_\xi \times \mathfrak M^{-} \right) = \overline{ R_\pm \left( P ;  \mathfrak n_\xi  \times  \mathfrak M^{+} \right) }.
	\end{equation}
So in the following it suffices to deal with the set $ \mathfrak n_\xi \times  \mathfrak M^{+} .$ The point of departure in our approach is the following version of the Weyl - van der Corput inequality. 

	\begin{lemma}[Weyl--van der Corput inequality] \label{lem3.4.8}
		Suppose that $I$ is a finite subset of $ \mathbb N,$ and suppose that $ (w(n))_{n \in \mathbb N} \subset \mathbb C$ is a complex-valued sequence, such that $ w(n) = 0 \hspace{0.05in} $ for $ n \notin I.$ Let $ H$ be a positive integer. Then one has,
			\begin{equation*}
				\left| \sum_{n \in \mathbb N} w(n) \right|^2 \leq \frac{ \normalfont\text{card}(I)+H}{H} \sum_{|h|<H} \left( 1 - \frac{|h|}{H} \right) \sum_{ n \in \mathbb N} w(n) \overline{w(n-h)}.
			\end{equation*}
	\end{lemma}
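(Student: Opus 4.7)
The plan is to use a classical duplication trick followed by Cauchy--Schwarz and a diagonal expansion in the difference variable. The key observation is that the single sum $S = \sum_{n \in \mathbb{N}} w(n)$ admits the identity
\begin{equation*}
H \cdot S = \sum_{m \in \mathbb{Z}} \sum_{h=1}^{H} w(m+h),
\end{equation*}
which holds because, upon shifting $n = m + h$ and exchanging the order of summation, each value $w(n)$ is counted exactly $H$ times on the right-hand side.

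Next, I would restrict the outer sum to those $m \in \mathbb{Z}$ for which $\{m+1, \ldots, m+H\}$ meets the support $I$; the number of such $m$ is at most $\mathrm{card}(I) + H - 1 \leq \mathrm{card}(I) + H$. Applying Cauchy--Schwarz to this shortened sum yields
\begin{equation*}
H^2 |S|^2 \leq \bigl(\mathrm{card}(I) + H\bigr) \sum_{m \in \mathbb{Z}} \left| \sum_{h=1}^H w(m+h) \right|^2.
\end{equation*}
The final step is to expand the inner modulus squared as a double sum over $h_1, h_2 \in \{1, \ldots, H\}$, perform the change of variables $n = m + h_1$ so that the generic summand becomes $w(n)\overline{w(n-h)}$ with $h = h_1 - h_2$, and then group according to the value of $h$. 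For fixed $h$ with $|h| < H$, the number of pairs $(h_1, h_2) \in \{1, \ldots, H\}^2$ with $h_1 - h_2 = h$ is exactly $H - |h|$, and no pairs exist for $|h| \geq H$. Collecting these contributions and dividing through by $H^2$ recovers the claimed inequality.

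There is no serious obstacle here; this is a textbook manipulation and the argument is purely formal. The only points requiring a little care are the counting $\#\{m \in \mathbb{Z} : \{m+1,\ldots,m+H\} \cap I \neq \emptyset\} \leq \mathrm{card}(I) + H$ in the Cauchy--Schwarz step, and verifying the combinatorial identity for the number of pairs $(h_1,h_2)$ realising a given difference $h$ in the final expansion. Both are elementary.
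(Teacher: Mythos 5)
Your argument is the standard textbook proof of the Weyl--van der Corput inequality: write $HS=\sum_{m}\sum_{h=1}^{H}w(m+h)$ via the duplication identity, apply Cauchy--Schwarz in $m$, expand the resulting square and collect terms according to the difference $h=h_1-h_2$, noting that each $h$ with $|h|<H$ is realised by exactly $H-|h|$ pairs $(h_1,h_2)\in\{1,\dots,H\}^2$. The paper itself gives no proof but merely cites Graham--Kolesnik, so you have in effect supplied the omitted argument, and the overall structure is correct.

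The one step that does not survive scrutiny is the counting bound feeding Cauchy--Schwarz. For a general finite $I\subset\mathbb{N}$ it is not true that $\#\{m\in\mathbb{Z}:\{m+1,\dots,m+H\}\cap I\neq\emptyset\}\leq\mathrm{card}(I)+H-1$; each element of $I$ contributes $H$ admissible windows, and when the elements of $I$ are spaced more than $H$ apart these windows are pairwise disjoint. Concretely, with $I=\{1,3,5\}$ and $H=2$ the admissible $m$ are $-1,0,1,2,3,4$, six values rather than four. Taking $w\equiv 1$ on this $I$ gives $|S|^2=9$, whereas the right-hand side of the claimed inequality evaluates to $\frac{5}{2}\cdot 3=\frac{15}{2}$, so the lemma as stated in the paper (with $\mathrm{card}(I)$ and an arbitrary finite $I$) is in fact false. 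The correct count is at most $(\max I-\min I)+H$, the length of the interval spanned by $I$ plus $H$; this coincides with $\mathrm{card}(I)+H-1$ precisely when $I$ is a block of consecutive integers. That is the hypothesis in Graham--Kolesnik's Lemma 2.5, and it is also how the lemma is used in the paper (with $I=[1,P]\cap\mathbb{N}$), so the remedy is to read ``finite subset'' as ``finite set of consecutive integers'' (equivalently, to replace $\mathrm{card}(I)$ by $\max I-\min I+1$). Under that hypothesis your counting, and hence your whole proof, is correct.
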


	\begin{proof}
		See \cite[Lemma 2.5]{graham_kolesnik_book}.
	\end{proof}

To begin with, let us fix an index $i.$ Apply Lemma \ref{lem3.4.8} to the exponential sum $ F_i (\alpha_d, \alpha_\theta) ,$ with $ I = [1,P] \cap \mathbb N.$  For an integer $ H \in [1,P]$ with $ H = o (P) $ to be chosen at a later stage one has 
	\begin{equation} \label{eq3.4.14}
		\begin{split} 	
			\left| F_i (\alpha_d, \alpha_\theta) \right|^2 \ll \frac{P+H}{H} \sum_{ |h| <H}  \sum_{ 1 \leq x \leq P } e \left( a_i \alpha_d \Delta_h \left(x^d \right) +  \lambda_i \alpha_\theta \Delta_h \left(x^\theta \right)  \right).
		\end{split}	
	\end{equation}
By the mean value theorem of differential calculus one has that
	\begin{equation*}
		| (x+h)^\theta - x^\theta| \asymp | h| P^{\theta -1} \ll HP^{\theta -1}.
	\end{equation*}
For $ \alpha_\theta \in \mathfrak M^{+}$ the above estimate leads to
	\begin{equation*}
		| \alpha_\theta | | (x+h)^\theta - x^\theta| \ll P^{-1 + \delta_0} H . 
	\end{equation*}

Using the elementary inequality $ | e(x) | \leq 2 \pi |x|$ which is valid for all $ x \in \mathbb R,$ we infer that for any $ \alpha_\theta \in \mathfrak M^{+}$ one has
	\begin{equation*}
			\left| e \left(  \lambda_i \alpha_\theta \Delta_h \left(x^\theta \right)  \right) \right| \ll P^{-1 + \delta_0} H .
	\end{equation*} 
One may now use the fact that $ | e(x) | \leq 1$ for all $ x \in \mathbb R,$ together with the above estimate to derive that
	\begin{equation*}
			\sum_{ | h| < H} \sum_{ 1 \leq x \leq  P }  e \left( a_i \alpha_d \Delta_h \left(x^d \right) +  \lambda_i \alpha_\theta \Delta_h \left(x^\theta \right)  \right) = \sum_{ | h| < H} \sum_{ 1 \leq x \leq  P }  e \left( a_i \alpha_d \Delta_h \left(x^d \right)  \right) + O \left( P^{\delta_0} H^2 \right).
	\end{equation*}
Substituting the above conclusion into (\ref{eq3.4.14}) and using the fact that $ H = o (P) $ yields
	\begin{equation} \label{eq3.4.15}
		\left| F_i (\alpha_d, \alpha_\theta) \right|^2 \ll P^{1 + \delta_0} H + \frac{P+H}{H} \sum_{ |h| <H} 	 \left| W_i (\alpha_d ,h ) \right|,	 
	\end{equation}	
where we write
	\begin{equation*}
		 W_i (\alpha_d ,h ) = \sum_{  1 \leq x \leq  P} e \left( a_i \alpha_d \Delta_h \left(x^d \right) \right).
	\end{equation*}
We now examine separately the cases $ d \geq 3 $ and $ d=2.$

First we consider the case $ d \geq 3.$ An application of H\"{o}lder's inequality reveals
	\begin{equation} \label{eq3.4.16}
		\left( \sum_{ |h| <H} 	| W_i (\alpha_d, h)| \right)^{2^{d-2}} \ll H^{2^{d-2}-1}  \sum_{ |h| <H} \left| W_i (\alpha_d , h)  \right|^{2^{d-2}}.
	\end{equation} 
Applying Weyl's differencing process, we infer by Lemma \ref{lem3.4.2} that
	\begin{equation*} 
		\left |W_i ( \alpha_d , h) \right|^{2^{d-2}} \ll P^{2^{d-2}-1} + P^{2^{d-2}-(d-1)} \sum_{\tupleh} \left| \sum_{x \in I(\tupleh)} e \left(  d !\ hh_1 \cdots h_{d-2} a_i \alpha_d x \right) \right|,
	\end{equation*}
where in the above summation notation, we sum over tuples $ \tupleh = (h_1, \ldots, h_{d-2})$ satisfying $ 1 \leq \tupleh \leq P$ and $ I(\tupleh)$ is a subinterval of $ [1,P],$ determined by the shifts $h_1, \ldots, h_{d-2}.$ 

Invoking a classical estimate for the sum of the geometric series we see that
	\begin{equation*}
		\left| \sum_{x \in I(\tupleh)} e \left(  d !\ hh_1 \cdots h_{d-2}  \alpha_d a_i x \right) \right| \ll \min \left\{ P, \hspace{0.02in} \|  d !\ hh_1 \cdots h_{d-2} a_i  \alpha_d  \|^{-1} \right\}.
	\end{equation*}
Hence by the preceding estimate concerning $ W_i(\alpha_d, h)$ we deduce that 
	\begin{equation*} 
		\begin{split}	
			\sum_{|h| < H} \left | W_i ( \alpha_d, h) \right|^{2^{d-2}} \ll & H P^{2^{d-2}-1} +  P^{2^{d-2}-(d-1)} \hspace{0.05in} \times \\[10pt]
			& \times \sum_{h =1 }^H \sum_{h_1 = 1}^P \cdots \sum_{h_{d-2} =1}^P \min \left\{ P, \|  d !\ hh_1 \cdots h_{d-2}  a_i \alpha_d \|^{-1} \right\}.
		\end{split}
	\end{equation*}
We write $ d !\ |a_i|  h h_1 \cdots h_{d-2} = m.$ Note that for $ 1 \leq h \leq H$ and for $\tupleh = (h_1, \ldots, h_{d-2})$ with $ 1 \leq \tupleh \leq P$  one has that $ m \in \mathbb Z \cap [1, \hspace{0.05in} d !\ |a_i| H P^{d-2}].$ Clearly, the number of solutions of the previous equation with respect to $m$ is $ \leq \tau_{d-1} (m) \ll_{d, a_i} m^\epsilon.$  Thus, for any fixed $ 0 < \epsilon< 1$ we  obtain 
	\begin{equation} \label{eq3.4.17}
		\sum_{|h| < H} \left | W_i ( \alpha_d, h) \right|^{2^{d-2}} \ll  H P^{2^{d-2}-1} + P^{2^{d-2} - (d-1) + \epsilon} \sum_{ m=1}^{ d !\ |a_i| HP^{d-2}} \min\left\{ P, \| m  \alpha_d  \|^{-1} \right\}.
	\end{equation}
We bound the sum on the right hand side of the above estimate using the following.

	\begin{lemma} \label{lem3.4.9}
		Suppose that $ \alpha, \beta $ are real numbers and suppose further that  $\left| \alpha - a/q \right| \leq 1/q^2,$ where $(a,q) =1.$ Then 
			\begin{equation*}
				\sum_{ z=1}^R \min \left\{ N, \| \alpha z + \beta \| \right\} \ll \left( N + q \log q \right) \left( \frac{R}{q} + 1 \right).
			\end{equation*}
	\end{lemma}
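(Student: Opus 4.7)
The plan is to prove this by the standard block decomposition argument. (We assume the statement intends $\min\{N,\|\alpha z+\beta\|^{-1}\}$, since otherwise the inequality is trivial.) First I would split the range $1\le z\le R$ into at most $\lceil R/q\rceil+1$ consecutive blocks of length $q$, so that it suffices to establish, for each starting point $z_0$, the single-block estimate
\begin{equation*}
    \Sigma(z_0):=\sum_{j=1}^{q}\min\!\left\{N,\,\|\alpha(z_0+j)+\beta\|^{-1}\right\}\ll N+q\log q,
\end{equation*}
since summing over the $O(R/q+1)$ blocks then yields the claimed bound.

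The heart of the argument is analysing the distribution of $\{\alpha(z_0+j)+\beta\}$ as $j$ runs from $1$ to $q$. Using the hypothesis $\alpha=a/q+\vartheta/q^{2}$ with $|\vartheta|\le 1$ and $(a,q)=1$, I would write
\begin{equation*}
    \alpha(z_0+j)+\beta=\frac{aj}{q}+\gamma+\frac{\vartheta j}{q^{2}},
\end{equation*}
where $\gamma=\alpha z_0+\beta$ is fixed on the block. Because $(a,q)=1$, as $j$ ranges over a complete residue system modulo $q$ the quantity $aj/q \pmod 1$ is a permutation of $\{0,1/q,\ldots,(q-1)/q\}$; combined with the translation by $\gamma$ and the perturbation $\vartheta j/q^{2}$ of size at most $1/q$, this forces the values $\|\alpha(z_0+j)+\beta\|$ (for $j=1,\dots,q$) to be separated at scale $1/q$. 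More precisely, for each integer $k$ with $0\le k\le q/2$ there are only $O(1)$ indices $j$ for which $\|\alpha(z_0+j)+\beta\|$ lies in the dyadic interval $[k/q,(k+1)/q]$.

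With that distribution in hand, the single-block sum is controlled by
\begin{equation*}
    \Sigma(z_0)\ll N+\sum_{1\le k\le q/2}\frac{q}{k}\ll N+q\log q,
\end{equation*}
where the $N$ term absorbs the $O(1)$ indices where $\|\alpha(z_0+j)+\beta\|$ is smaller than $1/q$. Summing over the $\lceil R/q\rceil+1$ blocks produces the stated inequality.

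The only delicate point is verifying the separation claim in the presence of the error term $\vartheta j/q^{2}$: one must check that this perturbation cannot collapse several of the values $aj/q+\gamma$ into a single short interval. This is where the hypothesis $|\alpha-a/q|\le 1/q^{2}$ is used crucially, as it guarantees the perturbation is at most $1/q$ uniformly across the block and therefore shifts each fractional part by less than one spacing. I expect this counting step to be the main obstacle to writing the argument cleanly; everything else is routine summation.
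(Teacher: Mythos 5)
Your proposal is correct and in substance it reproduces the standard proof: the paper itself does not argue the lemma, but simply cites Baker's \emph{Diophantine Inequalities}, Lemma 3.2 (with a remark that Baker's hypothesis $|\alpha - a/q| < 1/q^2$ can be relaxed to $\leq$), and that reference uses exactly the block decomposition into runs of length $q$, the permutation property of $aj/q$ modulo $1$ for $(a,q)=1$, and the observation that the perturbation $\vartheta j/q^2$ moves each unperturbed point by at most one spacing $1/q$, so each length-$1/q$ window captures only $O(1)$ indices. Your estimate $\Sigma(z_0) \ll N + \sum_{1\le k\le q/2} q/k \ll N + q\log q$ and the multiplication by $O(R/q+1)$ blocks is precisely how the cited lemma is established. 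You are also right that the displayed statement contains a typo and should read $\min\{N, \|\alpha z + \beta\|^{-1}\}$, as is clear from how the lemma is applied immediately afterwards; the ``delicate point'' you flag is handled exactly as you describe and poses no real obstacle.
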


	\begin{proof}
		See \cite[Lemma 3.2]{baker_book_dioph_ineq}. For the sake of clarity we remark here that in the statement Baker is imposing a strict inequality, namely  $\left| \alpha - a/q \right| < 1/q^2.$ However it is apparent from the proof that this is unnecessary.
	\end{proof}

By Dirichlet's  theorem on Diophantine approximation, there exist $ a \in \mathbb Z$ and $ q \in \mathbb N$ which satisfy $(a,q)=1, \hspace{0.05in} 1 \leq q \leq HP^{d-1 - \xi}$ and
	\begin{equation*} 
		\displaystyle  \left| a_i \alpha_d - \frac{a}{q} \right| \leq \frac{1}{q  HP^{d-1 - \xi}}.
	\end{equation*}
We pause for a moment to reflect on the fact that $ \alpha_d \in \mathfrak n_\xi.$ Recall that we assume $ H= o(P).$ So for large enough $P$ one has $ HP^{d-1 -  \xi} < P^{d - \xi}.$ So if it was $ 1 \leq q \leq P^\xi,$ then $\alpha_d$ would belong to the set of major arcs $ \mathfrak N_\xi.$ Thus, we may suppose that $ q > P^\xi.$ Hence
	\begin{equation} \label{eq3.4.18}
		 P^\xi < q \leq HP^{d-1 -\xi}.
	\end{equation}

One can now apply Lemma \ref{lem3.4.9}. For any fixed $ 0< \epsilon < 1$ one has 
	\begin{equation*}
		\begin{split}	
			\sum_{ m=1}^{ d !\ |a_i| HP^{d-2}} \min\left\{ P, \| m  \alpha_d  \|^{-1} \right\} & \ll \left( P + q \log q \right) \left( \frac{ d !\ | a_i| H P^{d-2}}{q} + 1 \right) \\[10pt]
			& \ll HP^{d-1 + \epsilon} \left( \frac{1}{q} + \frac{1}{P} + \frac{q}{ HP^{d-1}} \right),
		\end{split}
	\end{equation*} 
where in the second step estimate, we used the facts that $ \log q \ll P^\epsilon,$ and that for $ d \geq 3 $ one has $ HP^{d-2} \log q \gg P.$ By (\ref{eq3.4.18}) one has
	\begin{equation*}
		\frac{1}{q} + \frac{1}{P} + \frac{q}{ HP^{d-1}}  \ll P^{-\xi}.
	\end{equation*}
Thus, the previous estimate delivers
	\begin{equation*}
		\sum_{ m=1}^{ d !\ |a_i| HP^{d-2}} \min\left\{ P, \| m \alpha_d  \|^{-1} \right\}  \ll  HP^{d-1 - \xi+ \epsilon}.
	\end{equation*}

Using the above bound, one can now estimate the right hand side of (\ref{eq3.4.17}) to obtain
	\begin{equation*}
			\sum_{|h| < H} \left | W_i ( \alpha_d, h) \right|^{2^{d-2}} \ll  H P^{2^{d-2}-1} + H P^{2^{d-2} - \xi + \epsilon}.
	\end{equation*}
Invoking (\ref{eq3.4.16}) the previous estimate implies
	\begin{equation*}
			\sum_{|h| < H} \left | W_i ( \alpha_d, h) \right|  \ll H P^{1 - \xi/ 2^{d-2} + \epsilon}.
	\end{equation*}
Incorporating the above into (\ref{eq3.4.15}) and using the fact that $ H = o (P),$ yields that for any $  \alpha_d \in \mathfrak n_\xi $ one has
	\begin{equation} \label{eq3.4.19}
		\left| F_i (\alpha_d, \alpha_\theta) \right|^2 \ll P^{1 + \delta_0} H +  P^{2 - \xi/ 2^{d-2} + \epsilon}.
	\end{equation}

We now deal with the case where $ d=2.$ In this case one does not have to apply Weyl's differencing process. Note that for $ d=2$ the difference $ \Delta_h (x^2) = 2xh + h^2$ is already a linear polynomial with respect to $x.$ So one has
	\begin{equation*}	
			\left| W_i (\alpha_d, h) \right|  \leq  \left| \sum_{ 1 \leq x \leq P} e \left( 2ha_i \alpha_d x\right) \right| \ll \min\left\{ P, \hspace{0.01in} \| 2ha_i \alpha_d \|^{-1} \right\}.
	\end{equation*}
Thus, 
	\begin{equation*}
		\sum_{ | h| < H} \left| W_i (\alpha_d, h) \right|  \ll \sum_{m=1}^{2|a_i| H}  \min\left\{ P, \hspace{0.01in} \| m \alpha_d \|^{-1} \right\}.
	\end{equation*}

One can now apply Dirichlet's theorem on Diophantine approximation and argue as in the case $ d \geq 3.$ Here the inequality (\ref{eq3.4.18}) is replaced by $  P^\xi < q \leq HP^{1 - \xi}.$ Applying Lemma \ref{lem3.4.9} one has
	\begin{equation*}
		\begin{split}	
			\sum_{m=1}^{2|a_i| H}  \min\left\{ P, \hspace{0.01in} \| 2ha_i \alpha_d \|^{-1} \right\} & \ll \left( P + q \log q \right) \left( \frac{2 |a_i| H}{q} +1 \right) \\[10pt]
			& \ll HP^{1  - \xi + \epsilon} + P^{1 + \epsilon}, 
		\end{split}
	\end{equation*}
where in the second step estimate we used the facts that $ P \gg H \log q $ and $ H \ll HP.$ Therefore, by (\ref{eq3.4.15}) we infer that
	\begin{equation} \label{eq3.4.20}
		\left| F_i (\alpha_d, \alpha_\theta) \right|^2 \ll P^{1 + \delta_0} H + P^{2 -\xi + \epsilon} + P^{2 + \epsilon} H^{-1}.
	\end{equation}

We can now obtain a non-trivial upper bound for the exponential sum $ F_i (\alpha_d, \alpha_\theta) .$  Recall that $H  \in [1,P]$ is an integer at our disposal which satisfies $ H = o (P).$ Let us now choose a value for $H$ so that $ H \asymp P^{\varpi}$ where $ \varpi = (1 - \delta_0)/2 \in (0,1).$ First we deal with the case $ d \geq 3. $ Recall from (\ref{eq3.2.7}) that $ \delta_0 = 2^{1-2\theta}$ and recall from (\ref{eq3.2.8}) that $ 0 < \xi \leq \delta_0/8.$ By (\ref{eq3.4.19}) we deduce that for any fixed $ 0 < \epsilon < 1$ and any $ \alpha_d \in \mathfrak n_\xi$ one has that
	\begin{equation*}
		\left| F_i (\alpha_d, \alpha_\theta) \right| \ll P^{1- \xi/ 2^{d-3} + \epsilon}.
	\end{equation*} 
Now we come to the case $ d =2.$ With the above choice for the integer parameter $H$ we infer by (\ref{eq3.4.20}) that for any fixed  $ 0 < \epsilon < 1 $ and any $ \alpha_d \in \mathfrak n_\xi$ one has
	\begin{equation*}
		\left| F_i (\alpha_d, \alpha_\theta) \right| \ll P^{1 - \xi/2 + \epsilon}.
	\end{equation*}

By the preceding conclusions and recalling (\ref{eq3.2.5}) we have proved the following.

	\begin{lemma} \label{lem3.4.10}
		For each index $i$ and for any $( \alpha_d, \alpha_\theta) \in \mathfrak n_\xi \times   \mathfrak M,$ one has for any fixed $ 0 < \epsilon < 1$ that 
			\begin{equation*}
				\left| f_i (\alpha_d, \alpha_\theta) \right| \ll
					\begin{cases}
						P^{1 - \xi/2 + \epsilon}, & \hspace{0.1in} \text{when} \hspace{0.1in} d=2, \\[10pt]
						P^{1 - \xi/ 2^{d-3} + \epsilon }, & \hspace{0.1in}  \text{when} \hspace{0.1in} d \geq 3.
					\end{cases}
			\end{equation*}
	\end{lemma}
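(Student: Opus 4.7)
The plan is to prove Lemma \ref{lem3.4.10} by the Weyl--van der Corput plus Weyl differencing strategy already outlined in the paragraphs leading up to the statement. By the identity (\ref{eq3.2.5}), it suffices to prove the bound for $F_i(\alpha_d,\alpha_\theta)$ on dyadic intervals, so I will work directly with $F_i$ and fix an arbitrary index $i$ throughout.

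The first step is to apply the Weyl--van der Corput inequality (Lemma \ref{lem3.4.8}) to $F_i(\alpha_d,\alpha_\theta)$ with an integer parameter $H\in[1,P]$ to be chosen later, satisfying $H=o(P)$. This yields, as displayed in (\ref{eq3.4.14}), a bound of the shape
\begin{equation*}
    |F_i(\alpha_d,\alpha_\theta)|^2 \ll \frac{P+H}{H}\sum_{|h|<H}\sum_{1\le x\le P} e\bigl(a_i\alpha_d\Delta_h(x^d)+\lambda_i\alpha_\theta\Delta_h(x^\theta)\bigr).
\end{equation*}
The key observation is that the $\alpha_\theta$ contribution is negligible on $\mathfrak M^+$: since $|\alpha_\theta|<P^{-\theta+\delta_0}$ and $|\Delta_h(x^\theta)|\asymp|h|P^{\theta-1}\ll HP^{\theta-1}$ by the mean value theorem, the elementary inequality $|e(t)-1|\le 2\pi|t|$ allows one to strip off the $\alpha_\theta$ factor at the cost of an error $O(P^{\delta_0}H^2)$, leaving the estimate (\ref{eq3.4.15}) in terms of $W_i(\alpha_d,h)=\sum_{1\le x\le P}e(a_i\alpha_d\Delta_h(x^d))$.

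Next, the analysis splits according to the parity of $d$. For $d\ge 3$, a Hölder's inequality (\ref{eq3.4.16}) followed by Weyl differencing via Lemma \ref{lem3.4.2} reduces $\sum_{|h|<H}|W_i(\alpha_d,h)|^{2^{d-2}}$ to a double sum over $h,h_1,\ldots,h_{d-2}$ of a geometric series, bounded by $\min\{P,\|d!\,hh_1\cdots h_{d-2}a_i\alpha_d\|^{-1}\}$. Collecting terms by the product $m=d!\,|a_i|hh_1\cdots h_{d-2}\in[1,d!\,|a_i|HP^{d-2}]$ with the divisor function producing only a $P^\epsilon$ loss, one arrives at (\ref{eq3.4.17}). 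For $d=2$ the differencing is unnecessary since $\Delta_h(x^2)$ is already linear in $x$, and one directly obtains a geometric series estimate.

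The decisive input is that $\alpha_d\in\mathfrak n_\xi$: by Dirichlet's theorem there exist coprime $a,q$ with $1\le q\le HP^{d-1-\xi}$ and $|a_i\alpha_d-a/q|\le(qHP^{d-1-\xi})^{-1}$, and the failure of $\alpha_d$ to lie in $\mathfrak N_\xi$ forces $P^\xi<q\le HP^{d-1-\xi}$ (this is where the classical minor arc condition bites). Lemma \ref{lem3.4.9} then bounds the sum over $m$ by $HP^{d-1+\epsilon}(q^{-1}+P^{-1}+q(HP^{d-1})^{-1})\ll HP^{d-1-\xi+\epsilon}$, yielding the estimates (\ref{eq3.4.19}) and (\ref{eq3.4.20}). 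The final step is to optimise $H$: the choice $H\asymp P^{(1-\delta_0)/2}$ balances the $P^{1+\delta_0}H$ term against $P^2H^{-1}$ (in the $d=2$ case) and against the contribution from (\ref{eq3.4.19}), using that $\xi\le\delta_0/8$ so the $P^{-\xi/2^{d-2}}$ savings dominate. Taking square roots delivers the claimed bound in both regimes. I expect the mildly fiddly step to be the bookkeeping of the $\epsilon$ losses and the verification that the choice $H\asymp P^{(1-\delta_0)/2}$ simultaneously makes both error terms in (\ref{eq3.4.19})/(\ref{eq3.4.20}) of acceptable size, but conceptually the argument is routine once the Weyl--van der Corput linearisation of the fractional part has been carried out.
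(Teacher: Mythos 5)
Your proposal reproduces the paper's argument essentially step for step: Weyl--van der Corput (Lemma \ref{lem3.4.8}) to linearise, stripping off the $\alpha_\theta$ factor via the mean value theorem and the inequality $|e(t)-1|\le 2\pi|t|$ at cost $O(P^{\delta_0}H^2)$, then H\"older plus Weyl differencing (Lemma \ref{lem3.4.2}) for $d\ge 3$ or direct linearity for $d=2$, Dirichlet's approximation theorem with the minor-arc condition $\alpha_d\in\mathfrak n_\xi$ forcing $q>P^\xi$, Lemma \ref{lem3.4.9}, and finally $H\asymp P^{(1-\delta_0)/2}$. The one thing your sketch elides (and which the paper itself glosses over) is that taking the square root of the dominant term $P^{2-\xi/2^{d-2}+\epsilon}$ in (\ref{eq3.4.19}) yields a saving of $\xi/2^{d-1}$, not the $\xi/2^{d-3}$ stated in the lemma; this weaker exponent still suffices for Lemma \ref{lem3.4.11} after a slight adjustment of the subsequent choices of $\delta$ and $\epsilon$, so the overall conclusion is unaffected.
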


We may now finish our analysis as in Part 1 of the minor arcs treatment. Below we demonstrate how to deal with the case $ d \geq 3.$ One can argue similarly when $ d =2.$ Put 
	\begin{equation*} 
		\eta_2 = \xi / 2^{d-3} \hspace{0.3in} \text{and} \hspace{0.3in} \kappa = \max_{i,j} \left\{ \left| \lambda_i \right|^{-1}, \hspace{0.05in} \left| \mu_j \right|^{-1} \right\}.
	\end{equation*} 
Note that now $\kappa$ is a fixed real number such that $ \min_{i,j} \{  \kappa | \lambda_i|, \hspace{0.05in} \kappa | \mu_j| \} \geq 1.$ As in Part 1 of the minor arcs analysis, one can use  Lemma \ref{lem3.4.10} and Lemma \ref{lem3.3.6} in order to estimate the right hand side of the inequality (\ref{eq3.2.14}). Hence, we infer that for any fixed $ 0 < \epsilon < 1 $ one has
	\begin{equation*}
		\int_{ \mathfrak M^{+} } \int_{ \mathfrak n_\xi} \left| \mathcal F (\tuplealpha) K_\pm (\alpha_\theta) \right| \text d \tuplealpha \ll P^{ (1- \eta_2 + \epsilon) \delta} \cdot  P^{ s^\prime  -  (\theta +d) + \epsilon} \ll P^{ s - (\theta +d) - \eta_2 \delta + (1 + \delta) \epsilon}.
	\end{equation*}
One may now choose 
	\begin{equation*} 
		\delta = \frac{1}{6} \in (0, 1/3) \hspace{0.3in} \text{and} \hspace{0.3in} \epsilon = \frac{ \xi}{ (1 + \delta) 2^d} \in (0,1),
	\end{equation*}
as we are at liberty to do. With these choices one has $ - \eta_2 \delta + (1 + \delta) \epsilon < 0.$ Hence, the previous estimate delivers
	\begin{equation*}
		\int_{ \mathfrak M^{+} } \int_{ \mathfrak n_\xi} \left| \mathcal F (\tuplealpha) K_\pm (\alpha_\theta) \right| \text d \tuplealpha = o \left( P^{s - (\theta +d)} \right).
	\end{equation*}
In the light of (\ref{eq3.4.13}) we have established the following.

	\begin{lemma} \label{lem3.4.11}
			For  $ s_{\normalfont \min} \leq s \leq s_{\normalfont \max}$ one has 
			\begin{equation*}
				\int_{ \mathfrak M } \int_{ \mathfrak n_\xi} \left| \mathcal F (\tuplealpha) K_\pm (\alpha_\theta) \right|  \normalfont \text d \tuplealpha = o \left( P^{s - (\theta +d)} \right).
			\end{equation*}
	\end{lemma}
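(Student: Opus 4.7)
The plan is to mirror the strategy just executed for $[0,1) \times \mathfrak m^+$ in the proof of Lemma \ref{lem3.4.7}, now applied to the set $\mathfrak n_\xi \times \mathfrak M^+$. As a first step, I would invoke the symmetry relation (\ref{eq3.4.13}), namely $R_\pm(P; \mathfrak n_\xi \times \mathfrak M^-) = \overline{R_\pm(P; \mathfrak n_\xi \times \mathfrak M^+)}$, to reduce the task to estimating the integral over $\mathfrak n_\xi \times \mathfrak M^+$, since the integral over $\mathfrak n_\xi \times \mathfrak M^-$ has the same modulus.

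On $\mathcal B = \mathfrak n_\xi \times \mathfrak M^+$ I would apply the H\"older factorisation (\ref{eq3.2.14}) with the fixed real $\kappa = \max_{i,j}\{|\lambda_i|^{-1}, |\mu_j|^{-1}\}$, chosen precisely so that $\kappa|\lambda_i|, \kappa|\mu_j| \geq 1$ and Lemma \ref{lem3.3.6} becomes applicable. The supremum factor $\sup_{\mathcal B}|f_i|$ is controlled by Lemma \ref{lem3.4.10}, delivering the bound $P^{1-\eta_2+\epsilon}$ with $\eta_2 = \xi/2^{d-3}$ for $d \geq 3$ and $\eta_2 = \xi/2$ for $d=2$. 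Each of the four auxiliary mean values $\Xi_{f_i}$, $\Xi_{f_i,g_j}$, $\Xi_{f_i,h_k}$, $\Xi_{g_j,h_k}$ on $\mathcal B$ is extended plainly to the larger set $[0,1) \times [-\kappa,\kappa]$ and bounded using Lemma \ref{lem3.3.6}.

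Combining $\omega_1+\omega_2+\omega_3+\omega_4 = 1$ with the identity $s^\prime = A_\theta + (1-\omega_1)A_d$ from (\ref{eq3.2.15}) and the relation $s^\prime = s - \delta$ from (\ref{eq3.2.11}), and substituting the above bounds into (\ref{eq3.2.14}), I would obtain
\[
\int_{\mathcal B} |\mathcal F(\tuplealpha) K_\pm(\alpha_\theta)| \, \text d \tuplealpha \ll P^{(1-\eta_2+\epsilon)\delta}\, P^{s^\prime - (\theta+d) + \epsilon} \ll P^{s - (\theta+d) - \eta_2\delta + (1+\delta)\epsilon}.
\]
The parameters are then chosen as $\delta = 1/6 \in [0, 1/3)$ and $\epsilon = \xi/((1+\delta)2^d)$, both admissible, so that $-\eta_2\delta + (1+\delta)\epsilon$ is strictly negative in both regimes $d = 2$ and $d \geq 3$. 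The right-hand side is thus $o(P^{s-(\theta+d)})$, and combined with the symmetry reduction this completes the lemma.

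Essentially all of the difficulty has already been absorbed into the two key inputs, Lemmas \ref{lem3.4.10} and \ref{lem3.3.6}; what remains is parameter calibration. The only subtle point is verifying that the saving $-\eta_2\delta + (1+\delta)\epsilon < 0$ is uniform across the cases $d=2$ and $d \geq 3$, and that the choice $\delta = 1/6$ respects the admissibility range prescribed by (\ref{eq3.2.11}); the numerical choices above do so comfortably.
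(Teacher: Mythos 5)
Your proposal follows the paper's own proof essentially step for step: the symmetry reduction via (\ref{eq3.4.13}), the H\"older factorisation (\ref{eq3.2.14}) with the fixed choice $\kappa = \max_{i,j}\{|\lambda_i|^{-1},|\mu_j|^{-1}\}$, the supremum bound from Lemma~\ref{lem3.4.10}, the auxiliary mean value bounds from Lemma~\ref{lem3.3.6}, the identity $s' = A_\theta + (1-\omega_1)A_d$, and the final exponent calibration with $\delta = 1/6$ and $\epsilon = \xi/((1+\delta)2^d)$. This is exactly what the paper does, except the paper explicitly spells out only the case $d \geq 3$ and merely remarks that one argues similarly for $d=2$.

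However, there is a small but genuine numerical error in your final step: the claim that $\epsilon = \xi/((1+\delta)2^d)$ makes $-\eta_2\delta + (1+\delta)\epsilon$ strictly negative \emph{also when $d=2$} is false. In that regime $\eta_2 = \xi/2$, so $\eta_2\delta = \xi/12$, whereas $(1+\delta)\epsilon = \xi/2^d = \xi/4$, giving $-\xi/12 + \xi/4 = \xi/6 > 0$. For $d=2$ one must instead take a smaller $\epsilon$ (any $\epsilon < \xi/14$ would do, e.g. $\epsilon = \xi/20$), as the savings $\eta_2 = \xi/2$ is genuinely smaller than the $d \geq 3$ savings $\xi/2^{d-3}$, so the $d \geq 3$ formula overshoots. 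The fix is trivial and does not affect the structure of the argument, but as written the $d=2$ branch of your proof does not close.
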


Before we close this section, we find it appropriate to record the following lemma which concerns the complete set of minor arcs 
	\begin{equation*}
		\mathfrak p  = \left( [0,1)  \times \mathfrak m \right) \cup \left( \mathfrak n_\xi  \times  \mathfrak M \right).
	\end{equation*}
Combining Lemma \ref{lem3.4.7} and Lemma \ref{lem3.4.11} we have established the following.

	\begin{lemma} \label{lem3.4.12}
		For  $ s_{\normalfont \min} \leq s \leq s_{\normalfont \max}$ one has
			\begin{equation*}
				\int_{\mathfrak p } | \mathcal F (\tuplealpha) K_\pm (\alpha_\theta) |  \normalfont \text d \tuplealpha = o \left( P^{s - (\theta +d)} \right).
			\end{equation*}
	\end{lemma}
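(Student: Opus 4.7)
The plan is straightforward: this final lemma is a routine consolidation of the two preceding minor-arc estimates. Recall that the set of minor arcs was defined as
\begin{equation*}
\mathfrak p = \left([0,1) \times \mathfrak m\right) \cup \left(\mathfrak n_\xi \times \mathfrak M\right),
\end{equation*}
so first I would invoke subadditivity of the integral of a non-negative function over a union to write
\begin{equation*}
\int_{\mathfrak p} |\mathcal F(\tuplealpha) K_\pm(\alpha_\theta)| \, \text d \tuplealpha \leq \int_{[0,1) \times \mathfrak m} |\mathcal F(\tuplealpha) K_\pm(\alpha_\theta)| \, \text d \tuplealpha + \int_{\mathfrak n_\xi \times \mathfrak M} |\mathcal F(\tuplealpha) K_\pm(\alpha_\theta)| \, \text d \tuplealpha.
\end{equation*}

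Next I would apply Lemma \ref{lem3.4.7} to bound the first integral on the right as $o(P^{s-(\theta+d)})$, valid in the stated range $s_{\min} \leq s \leq s_{\max}$. Then I would apply Lemma \ref{lem3.4.11} to bound the second integral by the same quantity $o(P^{s-(\theta+d)})$, again in the same range of $s$.

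Finally, since the sum of two quantities that are $o(P^{s-(\theta+d)})$ is itself $o(P^{s-(\theta+d)})$, the desired conclusion follows immediately. There is no real obstacle here — all of the technical content (Weyl's inequality and the van der Corput derivative test for the first piece, the Weyl–van der Corput inequality and Dirichlet approximation for the second piece, combined with H\"older's inequality and the auxiliary mean value estimates of Lemma \ref{lem3.3.6}) has already been carried out in establishing Lemmas \ref{lem3.4.7} and \ref{lem3.4.11}. The lemma is essentially just a bookkeeping statement that packages the two parts of the minor-arc analysis together for later convenient reference in the major-arc and asymptotic formula stages of the argument.
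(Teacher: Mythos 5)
Your proposal is correct and matches the paper exactly: the paper simply states that combining Lemma \ref{lem3.4.7} and Lemma \ref{lem3.4.11} establishes Lemma \ref{lem3.4.12}, which is precisely the subadditivity argument you describe. Nothing more is needed.
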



\section{Trivial arcs}

In this section we deal with the disposal of the set of trivial arcs $ \mathfrak c = [0,1) \times  \mathfrak t ,$ where recall that $ \mathfrak t = \{ \alpha_\theta \in \mathbb R : |\alpha_\theta| \geq P^\omega \}.$ We put $ \mathfrak t^{+} = [P^\omega, \infty)$ and $ \mathfrak t^{-} = (-\infty,  - P^\omega].$ Note that $ \mathfrak t =  \mathfrak t^{+} \cup  \mathfrak t^{-}.$ We set $ \mathfrak c^{+} = [0,1) \times  [P^\omega, \infty) $ and $ \mathfrak c^{-} =  [0,1) \times  ( - \infty, P^\omega] .$ By a change of variables as in (\ref{eq3.4.1}) one has
	\begin{equation} \label{eq3.5.1}
		R_\pm \left( P ; \mathfrak c^{-} \right) = \overline{	R_\pm \left( P ; \mathfrak c^{+} \right)}.
	\end{equation}
So, it is enough to deal with the set $ \mathfrak c^{+}.$ 

Fix an index $i.$ One has 
	\begin{equation*}
		\mathfrak c^{+ }\subset \bigcup_{ \rho  = \lfloor \omega \log_2 P \rfloor}^\infty \left(  [0,1) \times  \left(2^\rho , 2^{\rho +1} \right] \right).
	\end{equation*}
We take $ \kappa = 2^{\rho +1}.$ Here we consider large enough values of $P$ so that for $ \rho  \geq \lfloor \omega \log_2 P \rfloor$ one has $ \min_{i,j} \{ \kappa | \lambda_i|, \hspace{0.05in} \kappa| \mu_j| \} \geq 1.$ By Lemma \ref{lem3.3.6}  and taking into account (\ref{eq3.2.2}), one has for any fixed $ \epsilon > 0$ that
	\begin{equation*}
		\begin{split}	
			\Xi_{f_i} (\mathfrak c^{+}) & \ll \sum_{ \rho  = \lfloor \omega \log_2 P \rfloor}^\infty \int_{2^\rho }^{2^{\rho +1}} \int_0^1 |f_i(\alpha_d, \alpha_\theta)|^{A_\theta } | K_\pm (\alpha_\theta) | \text d \tuplealpha \\[10pt]
			& \ll P^{A_\theta  - (\theta +d) + \epsilon} \sum_{ \rho  = \lfloor \omega \log_2 P \rfloor}^\infty \frac{ 1}{ 2^\rho }. 
		\end{split}
	\end{equation*}
Clearly,
	\begin{equation*}
		\sum_{ \rho  = \lfloor \omega \log_2 P \rfloor}^\infty \frac{ 1}{ 2^\rho  } \ll P^{- \omega}.
	\end{equation*}
Hence, by choosing $ \epsilon = \frac{\omega}{2} > 0$ the previous estimate now delivers
	\begin{equation*}
			\Xi_{f_i}  (\mathfrak c^{+}) \ll P^{A_\theta  - (\theta +d) - \frac{\omega}{2} }. 
	\end{equation*}

One can deal with the auxiliary mean values $ \Xi_{f_i, g_j}  (\mathfrak c^{+}), \Xi_{f_i, h_k}  (\mathfrak c^{+}), \Xi_{g_j, h_k} (\mathfrak c^{+}) $ similarly. We now put these estimates together. One is at liberty to take $ \delta =0$ in the inequality (\ref{eq3.2.14}). So, in this case by (\ref{eq3.2.11}) one has $ s^\prime = s ,$ and by (\ref{eq3.2.15}) one has $ s = A_\theta + (1-\omega_1)A_d.$ Thus we obtain
	\begin{equation*}
		\int_{\mathfrak t^{+} } \int_0^1 \left| \mathcal F (\tuplealpha) K_\pm (\alpha_\theta) \right| \text d \tuplealpha \ll P^{A_\theta + (1-\omega_1)A_d - (\theta +d) -\frac{\omega}{2}} = o \left( P^{s - (\theta+d)}\right).
	\end{equation*}
In the light of (\ref{eq3.5.1}) we have established the following.

	\begin{lemma} \label{lem3.5.1}
			For $ s_{\normalfont \min} \leq s \leq s_{\normalfont \max}$ one has
			\begin{equation*}
				\int_{ \mathfrak c}  \left| \mathcal F (\tuplealpha) K_\pm (\alpha_\theta) \right| \normalfont{\text d} \tuplealpha = o \left( P^{s - (\theta+d)}\right).
			\end{equation*}
	\end{lemma}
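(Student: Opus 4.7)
The plan is to exploit the decay of the Fejér-type kernels $K_\pm$ outside a bounded region (recall (\ref{eq3.2.2})), combined with a dyadic decomposition of $\mathfrak{t}^+$ and the auxiliary mean value estimates already established in Lemma \ref{lem3.3.6}. As with the minor arc analysis, I would first reduce to the ``positive'' piece $\mathfrak{c}^+ = [0,1) \times [P^\omega, \infty)$ via the change of variables (\ref{eq3.4.1}), which yields the conjugate identity $R_\pm(P; \mathfrak{c}^-) = \overline{R_\pm(P; \mathfrak{c}^+)}$ and therefore lets me focus on $\mathfrak{c}^+$ only.

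Next I would cover $\mathfrak{t}^+$ by the dyadic intervals $(2^\rho, 2^{\rho+1}]$ for $\rho \geq \lfloor \omega \log_2 P \rfloor$. On such an interval, the estimate (\ref{eq3.2.2}) gives the crucial quantitative decay $|K_\pm(\alpha_\theta)| \ll (\log P) \, 2^{-2\rho}$. Taking $\kappa = 2^{\rho+1}$ and applying Lemma \ref{lem3.3.6} on the box $[0,1) \times [-\kappa,\kappa]$ then produces, after pulling out the $|K_\pm|$ bound,
\begin{equation*}
    \int_{2^\rho}^{2^{\rho+1}} \int_0^1 |f_i(\boldsymbol{\alpha})|^{A_\theta} |K_\pm(\alpha_\theta)| \, \text{d}\boldsymbol{\alpha} \ll 2^{-\rho} (\log P) \, P^{A_\theta - (\theta+d) + \epsilon},
\end{equation*}
and analogous bounds for $\Xi_{f_i,g_j}(\mathfrak{c}^+)$, $\Xi_{f_i,h_k}(\mathfrak{c}^+)$ and $\Xi_{g_j,h_k}(\mathfrak{c}^+)$. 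Summing the geometric series yields $\sum_{\rho \geq \lfloor \omega \log_2 P \rfloor} 2^{-\rho} \ll P^{-\omega}$, so absorbing the harmless $\log P$ factor by taking $\epsilon = \omega/2$ we conclude
\begin{equation*}
    \Xi_{f_i}(\mathfrak{c}^+), \; \Xi_{f_i,g_j}(\mathfrak{c}^+), \; \Xi_{f_i,h_k}(\mathfrak{c}^+), \; \Xi_{g_j,h_k}(\mathfrak{c}^+) \ll P^{A_\theta + A_d - (\theta+d) - \omega/2}
\end{equation*}
(with the $\Xi_{f_i}$ bound replaced by $P^{A_\theta - (\theta+d) - \omega/2}$).

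Finally I would invoke the H\"older decomposition (\ref{eq3.2.14}) with $\delta = 0$, so that $s' = s$ and the weights $\omega_i$ from (\ref{eq3.2.15}) are admissible in the range $s_{\min} \leq s \leq s_{\max}$. Combining the four bounds above via the product
\begin{equation*}
    \bigl(\Xi_{f_i}\bigr)^{\omega_1} \bigl(\Xi_{f_i,g_j}\bigr)^{\omega_2} \bigl(\Xi_{f_i,h_k}\bigr)^{\omega_3} \bigl(\Xi_{g_j,h_k}\bigr)^{\omega_4}
\end{equation*}
and using $\omega_1 + \omega_2 + \omega_3 + \omega_4 = 1$ together with the relation $s = A_\theta + (1-\omega_1) A_d$ yields
\begin{equation*}
    \int_{\mathfrak{c}^+} |\mathcal{F}(\boldsymbol{\alpha}) K_\pm(\alpha_\theta)| \, \text{d}\boldsymbol{\alpha} \ll P^{s - (\theta+d) - \omega/2},
\end{equation*}
which is $o(P^{s-(\theta+d)})$. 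Using the conjugate identity to add in $\mathfrak{c}^-$ completes the proof. There is no genuine obstacle here; the only non-routine point is making sure to use the stronger $|\alpha_\theta|^{-2}$ decay of $K_\pm$ (rather than the trivial $|K_\pm| \ll 1$ bound used on bounded sets) so that the dyadic sum actually converges and produces the saving $P^{-\omega}$.
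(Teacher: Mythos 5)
Your argument matches the paper's own proof step for step: the conjugation reduction to $\mathfrak c^+$, the dyadic decomposition of $\mathfrak t^+$ starting at $\rho = \lfloor \omega \log_2 P\rfloor$, the use of the $(\log P)|\alpha_\theta|^{-2}$ decay of $K_\pm$ from (\ref{eq3.2.2}) together with the mean value estimates of Lemmas \ref{lem3.3.5}--\ref{lem3.3.6} on the box $[0,1]\times[-2^{\rho+1},2^{\rho+1}]$, the choice $\epsilon=\omega/2$ to absorb the logarithm and produce the net saving $P^{-\omega/2}$, and finally H\"older via (\ref{eq3.2.14}) with $\delta=0$ and $s' = s = A_\theta + (1-\omega_1)A_d$. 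You have also correctly identified the single non-routine ingredient — the quadratic decay of the kernel, without which the dyadic sum diverges.
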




\section{Major arcs analysis}

In this section we deal with the set of major arcs  $\mathfrak P = \mathfrak n_\xi \times \mathfrak M.$ We split the analysis into two subsections, dealing separately with the singular integral and the singular series.

\subsection{Singular integral analysis}

Here we deal with the singular integral. For each index $ i, j$ and $ k,$ and any $ \tuplebeta = (\beta_d, \beta_\theta) \in \mathbb R^2 $ we define the continuous generating functions
	\begin{equation} \label{eq3.6.1}
		\begin{split}
			& \upsilon_{f,i}( \tuplebeta) = \int_{ \frac{1}{2}x_i^\star P}^{2 x_i^\star P}  e ( a_i \beta_d \gamma^d  + \lambda_i \beta_\theta \gamma^\theta )  \text d \gamma, \\[10pt] 
			& \upsilon_{ g,j}(\tuplebeta) = \int_{ \frac{1}{2}y_j^\star P}^{2 y_j^\star P} e ( \mu_j \beta_\theta \gamma^\theta)  \text d \gamma, \\[10pt]		  
			& \upsilon_{  h,k}( \tuplebeta) = \int_{ \frac{1}{2}z_k^\star P}^{2 z_k^\star P} e ( b_k \beta_d \gamma^d) \text d \gamma. 
		\end{split}
	\end{equation} 
Moreover, we write
	\begin{equation*} 
		V(\tuplebeta) = \prod_{i =1}^\ell \upsilon_{  f,i}( \tuplebeta) \prod_{j=1}^m  \upsilon_{  g,j}(\tuplebeta) \prod_{k=1}^n \upsilon_{  h,k}( \tuplebeta ).
	\end{equation*}	
Define the truncated singular integrals 
	\begin{equation} \label{eq3.6.2}
		\begin{split}	
			& \mathfrak J^\pm (P^\xi, P^{\delta_0}) = \int_{-P^{-\theta+\delta_0}}^{P^{-\theta+ \delta_0}} \int_{-P^{-d+\xi}}^{P^{-d+\xi}} V (\tuplebeta) K_\pm(\beta_\theta) \text d \tuplebeta, \\[10pt]
			&\mathfrak J (P^\xi, P^{\delta_0}) = \int_{-P^{-\theta+ \delta_0}}^{P^{-\theta+ \delta_0}} \int_{-P^{-d+\xi}}^{P^{-d+\xi}} V (\tuplebeta) \text d \tuplebeta,
		\end{split}
	\end{equation}
and the complete singular integral
	\begin{equation} \label{eq3.6.3}
		\mathfrak J(\infty) = \int_{-\infty}^\infty \int_{-\infty}^\infty V (\tuplebeta) \text d \tuplebeta. 
	\end{equation}

	\begin{lemma} \label{lem3.6.1}
		For each index $i,j,k$ and for any $ \tuplebeta = (\beta_d, \beta_\theta) \in \mathbb R^2 $ one has
			\begin{itemize}
				\item[(i)] $\upsilon_{f,i}(\tuplebeta) \ll P(1+ P^d | \beta_d| + P^\theta | \beta_\theta|)^{-1/ \theta} ;$
				\item [(ii)] $ \upsilon_{ g,j}(\tuplebeta) \ll P (1 + P^\theta | \beta_\theta|)^{-1 / \theta} ;$
				\item [(iii)] $ \upsilon_{h,k} (\tuplebeta) \ll P( 1 + P^d | \beta_d|)^{-1/d}.$
			\end{itemize}
	\end{lemma}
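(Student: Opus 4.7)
The plan is to establish each bound by combining the trivial estimate with a non-trivial decay bound obtained from classical oscillatory integral techniques.

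Parts $(ii)$ and $(iii)$ are the more straightforward cases, and I would handle them via direct Fresnel-type computations. For $(iii)$, the trivial bound $|\upsilon_{h,k}(\tuplebeta)| \leq 2 z_k^\star P \cdot \tfrac{3}{2} \ll P$ is immediate. For the non-trivial decay, I would perform the substitution $v = |b_k \beta_d|^{1/d} \gamma$, which transforms the integral into
	\begin{equation*}
		\upsilon_{h,k}(\tuplebeta) = \frac{1}{|b_k \beta_d|^{1/d}} \int_A^B e(\pm v^d) \, \mathrm{d}v,
	\end{equation*}
over an appropriate range $[A,B]$. Since the Fresnel-type integral $\int_0^X e(v^d)\,\mathrm{d}v$ is uniformly bounded in $X$ (for $d \geq 2$), we obtain $|\upsilon_{h,k}(\tuplebeta)| \ll |\beta_d|^{-1/d}$. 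Combining the two via the identity $\min(P, |\beta_d|^{-1/d}) \asymp P(1+P^d|\beta_d|)^{-1/d}$ delivers $(iii)$. The proof of $(ii)$ is analogous, replacing $d$ with $\theta$ throughout and using the uniform boundedness of $\int_0^X e(v^\theta)\,\mathrm{d}v$ for $\theta > 1$.

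For part $(i)$, the phase $\phi(\gamma) = a_i \beta_d \gamma^d + \lambda_i \beta_\theta \gamma^\theta$ carries two competing terms, so the argument is more delicate. I would begin by noting the trivial bound $|\upsilon_{f,i}| \ll P$, which handles the regime $P^d|\beta_d| + P^\theta|\beta_\theta| = O(1)$, and reduce to the case $\beta_\theta \neq 0$ (as $\beta_\theta = 0$ reduces to a sum with phase $a_i\beta_d\gamma^d$ handled exactly as in $(iii)$). For the non-trivial regime, the key observation is that $\phi'$ and $\phi''$ each have at most one zero on $(0,\infty)$: indeed, $\phi'(\gamma) = \gamma^{d-1}(d a_i \beta_d + \theta \lambda_i \beta_\theta \gamma^{\theta-d})$ vanishes at a single point $\gamma_0$ (when real), and similarly for $\phi''$. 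This allows one to partition the integration range $[\tfrac{1}{2} x_i^\star P, 2 x_i^\star P]$ into $O(1)$ sub-intervals on each of which $\phi'$ is monotonic and of fixed sign. On a sub-interval where $|\phi'(\gamma)| \geq \mu$, van der Corput's first derivative test yields $\ll \mu^{-1}$; on the sub-interval containing a stationary point $\gamma_0$, the defining relation $d a_i \beta_d \gamma_0^{d-1} = -\theta \lambda_i \beta_\theta \gamma_0^{\theta-1}$ produces $|\phi''(\gamma_0)| \asymp |\beta_\theta| P^{\theta-2}$, permitting the second derivative test with bound $\ll (|\beta_\theta|P^{\theta-2})^{-1/2}$.

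The main obstacle is obtaining exactly the $1/\theta$-th power of the combined decay factor claimed in $(i)$: a direct application of van der Corput's first or second derivative test gives a $1/1$-st or $1/2$-th power of the dominant term, and translating this into a $1/\theta$-th power of the sum requires care. The cleanest route is to split into the two cases $P^\theta|\beta_\theta| \geq P^d|\beta_d|$ and $P^d|\beta_d| > P^\theta|\beta_\theta|$, and in each case to interpolate the trivial and the non-trivial bound via $\min(P,X^{-1/\theta}) \asymp P(1+P^\theta X)^{-1/\theta}$, so that the resulting factor $(1+P^d|\beta_d|+P^\theta|\beta_\theta|)^{-1/\theta}$ emerges as the weakest common denominator of the two regimes. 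Alternatively, one may absorb the $\gamma^d$ factor into a smooth, slowly-varying amplitude via a suitable scaling and apply a Fresnel-type bound directly, as in $(ii)$, which yields the required exponent without invoking derivative tests at all.
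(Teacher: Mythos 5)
Your treatment of parts $(ii)$ and $(iii)$ is correct and essentially equivalent to the paper's: the Fresnel-type substitution combined with the uniform boundedness of $\int_0^X e(v^k)\,\mathrm{d}v$ amounts to the same decay that the paper obtains via integration by parts or, alternatively, via the van der Corput oscillatory-integral bound (the paper's Lemma~\ref{lem3.6.2}). The identity $\min(P,|\beta_d|^{-1/d}) \asymp P(1+P^d|\beta_d|)^{-1/d}$ is applied correctly.

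For part $(i)$, however, your sketch does not close the argument, and the route you choose introduces complications that the paper deliberately avoids. You partition the range at the zeros of $\phi'$ and $\phi''$ and propose to apply the first derivative test away from the stationary point $\gamma_0$ of $\phi'$ and the second derivative test near it. The issue is that $\phi''$ also vanishes at a point $\gamma_1$ with $\gamma_1/\gamma_0 = \bigl((d-1)/(\theta-1)\bigr)^{1/(\theta-d)}$, and both $\gamma_0$ and $\gamma_1$ may lie in the integration range. On the sub-interval containing $\gamma_1$ you cannot apply the second derivative test (since $|\phi''|$ is not bounded below), so you must fall back on the first derivative test, which requires you to establish that $|\phi'(\gamma_1)| \asymp |\beta_d|P^{d-1}$ is large enough in both of your two regimes; and on the sub-interval containing $\gamma_0$ you must verify that the second derivative bound $(|\beta_\theta|P^{\theta-2})^{-1/2}$ dominates the claimed $1/\theta$-decay. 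Both verifications are nontrivial and you do not carry them out -- you acknowledge the exponent bookkeeping ``requires care'' and then only gesture at a two-case interpolation without supplying the estimates. As written, this is a genuine gap.

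The paper's proof sidesteps all of this by first normalizing to $[1/2,2]$, dismissing the regime $|\beta_d|+|\beta_\theta| < 1$ trivially, and then applying van der Corput's $k$-th derivative oscillatory-integral estimate at orders $k=\lfloor\theta\rfloor$ and $k=d$ rather than $k=1$ or $k=2$. Taking $n = \lfloor\theta\rfloor > d$ derivatives annihilates the polynomial term $\gamma^d$ entirely, so $\phi^{(n)}(\gamma) = C_n\beta_\theta\gamma^{\theta-n}$ depends only on $\beta_\theta$ and never vanishes; this gives $\ll |\beta_\theta|^{-1/n}$ whenever $|\beta_\theta| \geq |\beta_d|$. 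When $|\beta_d| > |\beta_\theta|$, the $d$-th derivative is $\phi^{(d)}(\gamma) = d!\,\beta_d + C_d\beta_\theta\gamma^{\theta-d}$, and either the triangle inequality gives $|\phi^{(d)}| \gg |\beta_d|$ (yielding $\ll |\beta_d|^{-1/d}$), or else $|\beta_d| \asymp |\beta_\theta|$ and the $n$-th derivative test applies again. Since $n \leq \theta$ and $d < \theta$, comparing $|\beta_\theta|^{-1/n}$ or $|\beta_d|^{-1/d}$ with $(1+|\beta_d|+|\beta_\theta|)^{-1/\theta}$ is immediate. If you want to salvage your approach you should either pass to higher-order derivatives as the paper does, or spell out the sub-interval decomposition explicitly and check both derivative lower bounds in each regime, paying particular attention to the sub-interval containing $\gamma_1$.
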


In the case where $ \theta \in \mathbb N$ one can find a proof of this lemma in \cite[Theorem 7.3]{vaughan_book}. In our case one has $ \theta \notin \mathbb N.$ For this reason we give an alternative proof using van der Corput's estimate for oscillatory integrals, dating back to 1935 in van der Corput's work on the stationary phase method \cite{VdC_osc_int}.

	\begin{lemma} \label{lem3.6.2}
		Let $ \lambda $ be a positive real. Suppose that $ \phi : (a,b) \to \mathbb R $ is a smooth function in $(a,b),$ and suppose that $ \left| \phi^{(k)} (x) \right| \geq 1 $ for all $ x \in (a,b).$ Then,
			\begin{equation*}
				\displaystyle \left|\int_a^b e^{i \lambda \phi (x)} \normalfont \text d x \right| \leq c_k \lambda^{-1/k}
			\end{equation*}
		holds when:
			\begin{itemize}
				\item[(i)] $ k \geq 2,$ or
				\item [(ii)]$k=1$ and $ \phi^\prime (x)$ is monotonic.
			\end{itemize}
		The bound $c_k$ is independent of $ \phi $ and $ \lambda.$
	\end{lemma}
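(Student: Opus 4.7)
The plan is to proceed by induction on $k$. The base case $k=1$ will be handled by a single integration by parts, and the inductive step by splitting the domain around the critical point of $\phi^{(k-1)}$ and optimising a scale parameter.

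\textbf{Base case ($k=1$).} The starting point is the identity
$$\int_a^b e^{i\lambda\phi(x)}\,dx=\int_a^b\frac{1}{i\lambda\phi'(x)}\frac{d}{dx}\bigl(e^{i\lambda\phi(x)}\bigr)\,dx.$$
An integration by parts produces a boundary contribution bounded by $2/\lambda$, thanks to $|\phi'|\geq 1$, together with the remainder
$$-\int_a^b e^{i\lambda\phi(x)}\frac{d}{dx}\left(\frac{1}{i\lambda\phi'(x)}\right)dx.$$
The monotonicity hypothesis on $\phi'$ ensures that $1/\phi'$ is itself monotonic, so the absolute value of the remainder is at most $\lambda^{-1}|1/\phi'(b)-1/\phi'(a)|\leq 2/\lambda$. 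This yields $c_1$ as a small absolute constant.

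\textbf{Inductive step ($k\geq 2$).} Assume the result holds up to $k-1$. Since $\phi^{(k)}$ is continuous and $|\phi^{(k)}|\geq 1$, it has constant sign, so $\phi^{(k-1)}$ is strictly monotonic and admits at most one zero in $[a,b]$. Let $c\in[a,b]$ be the point at which $|\phi^{(k-1)}|$ attains its minimum. For a parameter $\delta>0$ to be chosen, decompose
$$\int_a^b=\int_{J_1}+\int_{J_2},\qquad J_1=(c-\delta,c+\delta)\cap(a,b),$$
with $J_2$ the complement, a union of at most two subintervals. The integral over $J_1$ is bounded trivially by $2\delta$. On each component of $J_2$, the monotonicity of $\phi^{(k-1)}$ together with $|\phi^{(k)}|\geq 1$ and the mean value theorem delivers $|\phi^{(k-1)}(x)|\geq\delta$. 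Rescaling $\psi:=\phi/\delta$ and $\lambda':=\lambda\delta$ reduces matters to a function with $|\psi^{(k-1)}|\geq 1$, and the inductive hypothesis yields
$$\left|\int_a^b e^{i\lambda\phi(x)}\,dx\right|\leq 2\delta+2c_{k-1}(\lambda\delta)^{-1/(k-1)}.$$
Balancing the two contributions via $\delta\asymp\lambda^{-1/k}$ produces the asserted bound $c_k\lambda^{-1/k}$.

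\textbf{Main obstacle.} The only subtle point is the transfer of the monotonicity hypothesis when the induction reaches its base, namely the case $k=2$ where the $k-1=1$ statement is applied to $\psi'=\phi'/\delta$ on a component of $J_2$. Here the required monotonicity of $\psi'$ follows from the monotonicity of $\phi'$ on the whole of $(a,b)$, itself a consequence of $|\phi''|\geq 1$ combined with the continuity of $\phi''$. Once this is verified, the remaining work is purely bookkeeping: one has to account for the cases in which $c$ coincides with $a$ or $b$, or the zero of $\phi^{(k-1)}$ lies outside $[a,b]$, but in all such configurations $J_2$ has fewer components and the estimate only improves.
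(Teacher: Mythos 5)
Your proof is correct, and it reproduces the standard inductive argument for van der Corput's oscillatory-integral lemma: integration by parts in the base case $k=1$, then for $k\geq 2$ a decomposition around the unique minimiser of $|\phi^{(k-1)}|$, a mean-value bound $|\phi^{(k-1)}|\geq\delta$ on the outer region, rescaling to invoke the inductive hypothesis, and the choice $\delta\asymp\lambda^{-1/k}$. The paper does not prove the lemma at all; it simply cites Proposition 2 on p.\ 332 of Stein's \emph{Harmonic Analysis}, and the argument you give is exactly the one presented there, so the approaches coincide. You also correctly identify and resolve the one genuine subtlety, namely that when the induction bottoms out at $k=2$ the base case requires monotonicity of $\psi'=\phi'/\delta$ on the components of $J_2$, which follows from $|\phi''|\geq 1$ forcing $\phi''$ to keep a fixed sign. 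One small bookkeeping remark: in the base case the boundary terms contribute $2/\lambda$ and the integrated term another $2/\lambda$, so the argument yields $c_1\leq 4$ and the recursion $c_k=2+2c_{k-1}$; this gives $c_k=O(2^k)$, which is perfectly adequate since the lemma only asserts that $c_k$ is independent of $\phi$ and $\lambda$.
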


	\begin{proof}
		See \cite[Proposition 2, p.332]{stein_book}.
	\end{proof}

	\begin{proof}[Proof of Lemma \ref{lem3.6.1}]
		 
		The estimates $(ii)$ and $(iii)$ can be easily established by using integration by parts. As an alternative approach, one may use Lemma \ref{lem3.6.2} as below.  Now we come to prove estimate $(i).$
		
		For $ \tuplebeta = (\beta_d, \beta_\theta) \in \mathbb R^2$ we put
			\begin{equation*}
				\upsilon_f ( \tuplebeta)  = \int_{1/2}^2 e (\beta_d \gamma^d + \beta_\theta \gamma^\theta) \text d \gamma.
			\end{equation*}
		It is enough to prove that
				\begin{equation} \label{eq3.6.4}				
					\upsilon_f (\tuplebeta) \ll \frac{1}{ \left(1+ | \beta_d| + | \beta_\theta| \right)^{1/ \theta} }.
				\end{equation}	
		The desired estimate for the function $ \upsilon_{f,i}$ follows by a change of variables replacing $ \gamma$ by $x_i^\star P \gamma.$ Then one can apply (\ref{eq3.6.4}) with  $ a_i (x_i^\star P)^d \beta_d$ in place of $ \beta_d $ and $ \lambda_i (x_i^\star P)^\theta \beta_\theta$ in place of $ \beta_\theta.$ 
		
		It is apparent that $ \left| \upsilon_f (\tuplebeta) \right| \leq 3/2 \ll 1 .$ So, if $ |\beta_d | + | \beta_\theta| < 1$ then  (\ref{eq3.6.4}) trivially holds. Hence, in the rest of the proof we may suppose that $ | \beta_d | + | \beta_\theta | \geq 1.$  For $  \gamma \in [1,2]$ we define the function
			\begin{equation*}
				\phi( \gamma ) =  \beta_d \gamma^d + \beta_\theta  \gamma^\theta .
			\end{equation*}
		We distinguish the following two cases about $ \beta_d$ and $\beta_\theta.$	
		
		\textit{Case} $(1).$ Suppose that $ | \beta_\theta | > | \beta_d|.$ Recall that $ d $ is a positive integer such that $ \theta > d + 1.$ This last condition implies that $ d < \lfloor \theta \rfloor .$ Temporarily we write $ n= \lfloor \theta \rfloor .$  Hence, for $ \gamma \in [1/2, 2]$ one has
			\begin{equation*}
				| \phi^{(n)} (\gamma ) | = C_n | \beta_\theta|  \gamma^{\theta -n} \geq  C_n \left(\frac{1}{2} \right)^{\theta -n} | \beta_\theta|, 
			\end{equation*}
		where we put $  C_n =  \theta (\theta-1) \cdots (\theta -n +1) .$ Put $ C = C_n \left(\frac{1}{2} \right)^{\theta -n}.$ One can now take $ \lambda =C| \beta_\theta|$ and apply Lemma \ref{lem3.6.2} with $k =n$ to the function 
			\begin{equation*}
				\gamma \mapsto \frac{1}{C | \beta_\theta|  } \phi (\gamma).
			\end{equation*}
		Since $ | \beta_\theta| > | \beta_d|$ and $ | \beta_d| +  \left| \beta_\theta \right| \geq 1,$ we deduce that
			\begin{equation*}
				\int_{1/2}^2 e(	\phi( \gamma )) \text d \gamma \leq C^{-1/n} \left| \beta_\theta \right|^{-1/n} \ll \frac{1}{( 1+  |\beta_d| + \left| \beta_\theta \right|)^{1/\theta}},
			\end{equation*}
		which confirms (\ref{eq3.6.4}).
		
		\textit{Case} $(2).$ Suppose that $ | \beta_\theta | \leq | \beta_d|.$ One has
			\begin{equation*} 
				| \phi^{(d)}( \gamma ) | = \left| d ! \beta_d  +  C_d \beta_\theta  \gamma^{\theta-d} \right|,
			\end{equation*}
		where we put $ C_d =  \theta (\theta -1) \cdots (\theta -d+1).$
		In order to give a lower bound for the quantity $ \left| \phi^{(d)} (\gamma) \right|$ we examine separately the following two scenarios.
	 	
	 	Suppose that 
			\begin{equation*}
				 \frac{1}{2} d ! |\beta_d | \geq  C_d 2^{\theta-d} |\beta_\theta| .
			\end{equation*}
		By the triangle triangle inequality one may infer for $ \gamma \in [1/2 ,2]$ that
			\begin{equation*}
				\begin{split}	
					| \phi^{(d)}( \gamma) |  >  d ! |\beta_d | -  C_d \gamma^{\theta -d} \left| \beta_\theta \right|  \geq  d ! |\beta_d | -  C_d 2^{\theta- d} \left| \beta_\theta \right|  \geq \frac{1}{2}  d ! |\beta_d |.
				\end{split}
			\end{equation*}
		One can now take $ \lambda = 2^{-1} d ! |\beta_d |$ and apply Lemma \ref{lem3.6.2} with $k =d$ to the function 
			\begin{equation*} 
				\gamma \mapsto \frac{1}{ 2^{-1} d ! |\beta_d |} \phi (\gamma).
			\end{equation*}
		 Since $ | \beta_d| \geq | \beta_\theta|$ and $ | \beta_d| + | \beta_\theta| \geq 1 ,$ we deduce that	
			\begin{equation*}
				\int_{1/2}^2 e( \phi( \gamma) ) \text d \gamma \leq (2^{-1} d!)^{-1/d} \left| \beta_d \right|^{-1/d} \ll  \frac{1}{( 1+  |\beta_d| + \left| \beta_\theta \right|)^{1/\theta}},
			\end{equation*}
		which again confirms (\ref{eq3.6.4}).
			
 		Next, we suppose that 
			\begin{equation*}
				\frac{1}{2} d ! |\beta_d | <  C_d 2^{\theta-d} |\beta_\theta| .
			\end{equation*}
		Since we assume as well that $ | \beta_d| \geq | \beta_\theta|$ one may now suppose that $ | \beta_d| \asymp | \beta_\theta|.$ In such a situation an application of Lemma \ref{lem3.6.2} with $ k = n$ as in \textit{Case $(1)$} yields 
			\begin{equation*}
				\int_{1/2}^2 e \left( \phi(\gamma) \right) \text d \gamma \ll | \beta_\theta|^{-1/\theta} \ll  \frac{1}{( 1+  |\beta_d| + \left| \beta_\theta \right|)^{1/\theta}},
			\end{equation*}
		and thus the proof is now complete.
	\end{proof}

Define $ \Delta = \Delta(\theta, d , \ell, m,n) > 0$ via
		\begin{equation} \label{eq3.6.5}
			\Delta( \theta, d , \ell, m,n) = \min \left\{ \frac{m}{\theta} + \frac{\ell}{2\theta} -1, \hspace{0.05in} \frac{n}{d} + \frac{\ell}{2\theta}-1 \right\}.
		\end{equation}
Note that the assumptions $ \ell + m \geq A_\theta + 1 $ and $ \ell \geq  \max \{ \lceil 2 \theta (1 - n/d) \rceil, \hspace{0.05in} 1 \}$ ensure that $ \Delta > 0.$
		\begin{lemma} \label{lem3.6.3}
			One has
				\begin{equation*}
					\mathfrak J^\pm (P^\xi, P^{\delta_0})  = 2 \tau \mathfrak J (\infty) +  o \left( P^{s - (\theta +d)} \right).
				\end{equation*}
		\end{lemma}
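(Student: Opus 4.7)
The plan is to prove the lemma in three steps: establish convergence of the complete singular integral $\mathfrak J(\infty)$ via the pointwise bounds of Lemma \ref{lem3.6.1}, replace the kernel $K_\pm(\beta_\theta)$ by $2\tau$ on the truncated region using (\ref{eq3.2.4}), and finally dispose of the tail contribution $\mathfrak J(\infty) - \mathfrak J(P^\xi, P^{\delta_0})$ by exploiting the strict positivity of $\Delta$ guaranteed by hypotheses (b) and (c) of Theorem \ref{thm3.1.2}.

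First I would perform the change of variables $u = P^d \beta_d$ and $v = P^\theta \beta_\theta$. Combining the three estimates in Lemma \ref{lem3.6.1} yields
\begin{equation*}
    |V(\tuplebeta)| \ll P^s (1+|u|+|v|)^{-\ell/\theta} (1+|v|)^{-m/\theta} (1+|u|)^{-n/d},
\end{equation*}
while $\text{d}\tuplebeta = P^{-(\theta+d)} \text{d} u \, \text{d} v$. Using the elementary inequality $(1+|u|+|v|)^{-\ell/\theta} \leq (1+|u|)^{-\ell/(2\theta)} (1+|v|)^{-\ell/(2\theta)}$ splits the integrand into a product of a function of $u$ and a function of $v$, and the resulting integrals converge separately precisely when $\ell/(2\theta) + n/d > 1$ and $\ell/(2\theta) + m/\theta > 1$, i.e.\ when $\Delta > 0$. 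This already shows $\mathfrak J(\infty) \ll P^{s-(\theta+d)}$, and the same bound holds for $\int\int |V(\tuplebeta)| \text{d}\tuplebeta$ over any measurable subset of $\mathbb R^2$.

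Next, I would replace $K_\pm$ by $2\tau$. On the truncated region one has $|\beta_\theta| < P^{-\theta + \delta_0} < 1$ for $P$ large, so by (\ref{eq3.2.4}), $K_\pm(\beta_\theta) = 2\tau + O((\log P)^{-2})$. The error contribution is therefore bounded by
\begin{equation*}
    (\log P)^{-2} \int_{-P^{-\theta+\delta_0}}^{P^{-\theta+\delta_0}} \int_{-P^{-d+\xi}}^{P^{-d+\xi}} |V(\tuplebeta)| \, \text{d}\tuplebeta \ll (\log P)^{-2} P^{s-(\theta+d)},
\end{equation*}
which is $o\bigl(P^{s-(\theta+d)}\bigr)$. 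It then remains to show $\mathfrak J(P^\xi, P^{\delta_0}) = \mathfrak J(\infty) + o\bigl(P^{s-(\theta+d)}\bigr)$.

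The tail $\mathfrak J(\infty) - \mathfrak J(P^\xi, P^{\delta_0})$ is the integral of $V(\tuplebeta)$ over $\{|u| > P^\xi\} \cup \{|v| > P^{\delta_0}\}$ in scaled coordinates. Applying the same factorisation as above, the $u$-tail and the $v$-tail reduce to integrals of $(1+|u|)^{-1-\Delta_u}$ and $(1+|v|)^{-1-\Delta_v}$, where $\Delta_u = n/d + \ell/(2\theta) - 1 \geq \Delta$ and $\Delta_v = m/\theta + \ell/(2\theta) - 1 \geq \Delta$. These integrate to $O(P^{-\xi \Delta})$ and $O(P^{-\delta_0 \Delta})$ respectively, so the tail contributes
\begin{equation*}
    O\bigl(P^{s-(\theta+d)} (P^{-\xi \Delta} + P^{-\delta_0 \Delta})\bigr) = o\bigl(P^{s-(\theta+d)}\bigr).
\end{equation*}
Combining the two steps, one obtains $\mathfrak J^\pm(P^\xi, P^{\delta_0}) = 2\tau\mathfrak J(\infty) + o(P^{s-(\theta+d)})$ as claimed. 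The main technical point — and essentially the only subtle one — is the initial splitting of $(1+|u|+|v|)^{-\ell/\theta}$: both the convergence of $\mathfrak J(\infty)$ and the polynomial tail decay must be secured simultaneously, and this is exactly what the definition of $\Delta$ in (\ref{eq3.6.5}) is designed to encode. Everything else is routine.
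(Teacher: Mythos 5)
Your proposal is correct and follows essentially the same route as the paper: bound $V(\tuplebeta)$ pointwise via Lemma \ref{lem3.6.1}, split $(1+|\beta_d|P^d+|\beta_\theta|P^\theta)^{-\ell/\theta}$ into a product (the paper phrases this as $\alpha^{1/2}\beta^{1/2}\leq\max\{\alpha,\beta\}\ll 1+\alpha+\beta$, you as $(1+|u|+|v|)^{-\ell/\theta}\leq(1+|u|)^{-\ell/(2\theta)}(1+|v|)^{-\ell/(2\theta)}$, which are the same estimate), replace $K_\pm$ by $2\tau + O((\log P)^{-2})$ on the truncated box via (\ref{eq3.2.4}), and dispose of the two tails using the polynomial decay guaranteed by $\Delta>0$. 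The explicit scaling $u=P^d\beta_d$, $v=P^\theta\beta_\theta$ is a cosmetic difference that does not change the argument.
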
	
	
		\begin{proof}
			For $ | \beta_\theta| < P^{-\theta + \delta_0}$ by (\ref{eq3.2.4}) one has that
				\begin{equation} \label{eq3.6.6}
					\mathfrak J^\pm (P^\xi, P^{\delta_0}) = \left( 2 \tau + O \left( \left( \log P \right)^{-2}\right) \right) \mathfrak J (P^\xi, P^{\delta_0}).
				\end{equation}
			By Lemma \ref{lem3.6.1} and a trivial estimate one has that
				\begin{equation*} 
					V(\tuplebeta)  \ll P^s \left( 1 + | \beta_\theta| P^\theta \right)^{-m / \theta} \left( 1 + | \beta_d| P^d \right)^{-n / d} \left( 1 +  | \beta_d| P^d + |  \beta_\theta| P^\theta \right)^{- \ell / \theta}.
				\end{equation*} 	
			Using the trivial estimate
				\begin{equation*}
					\alpha^{1/2} \beta^{1/2} \leq \max\{\alpha, \beta\} \ll 1 + \alpha + \beta,
				\end{equation*}
			the preceding inequality now yields
				\begin{equation} \label{eq3.6.7}
					\begin{split}				
						V(\tuplebeta) & \ll P^s \left( 1 + | \beta_\theta| P^\theta \right)^{-m / \theta - \ell / 2\theta } \left( 1 + | \beta_d| P^d \right)^{-n / d - \ell / 2 \theta} \\[10pt]
						& \ll P^s \left( 1 + | \beta_\theta| P^\theta \right)^{-(1 + \Delta)} \left( 1 + | \beta_d| P^d \right)^{-(1+ \Delta)}.			
					\end{split} 
				\end{equation}
			Temporarily we write $ \mathcal B $ to denote the box $ [P^{-\theta+ \xi}, P^{-\theta+ \xi}] \times [-P^{-d+ \delta_0}, P^{-d+ \delta_0}].$ If $ \tuplebeta \in \mathbb R^2 \setminus \mathcal B $ then we either have $ |\beta_\theta| P^{\theta} \geq P^{\delta_0} $ or $| \beta_d | P^d \geq P^\xi.$ By the preceding estimate we infer that
				\begin{equation*}
					\begin{split}
						\mathfrak J (P^\xi, P^{\delta_0})  - \mathfrak J (\infty) & \ll P^s \left( \int_{ |\beta_\theta| P^{\theta} \geq P^{\delta_0} } \int_{-\infty}^\infty V(\tuplebeta)  \text d \tuplebeta  + \int_{-\infty}^\infty \int_{ |\beta_d| P^d\geq P^\xi }  V(\tuplebeta)  \text d \tuplebeta	\right) \\[10pt]
						& \ll P^{ s -( \theta+d) - \Delta \delta_0} +  P^{ s -( \theta+d) - \Delta \xi}  \\[10pt]
						& \ll o \left( P^{s - (\theta +d)} \right).	
					\end{split}
				\end{equation*}		
			Therefore, by (\ref{eq3.6.6}) we deduce that
				\begin{equation*}
					\mathfrak J^\pm (P^\xi, P^{\delta_0})  = 2 \tau \mathfrak J (\infty) + o \left( P^{s - (\theta +d)} \right),
				\end{equation*}
			which is what we wanted to prove.
		\end{proof}

After these preliminary results we now come to the heart of the singular integral analysis.   The approach we take for studying the singular integral $ \mathfrak J $ is essentially the treatment of Schmidt as presented in \cite{schmidt_simult_rat_zero_quad_forms}. The validity of the results below should come with no surprise to the experts and to those who are familiar with the paper of Schmidt. For the sake of completeness we have decided to include the proofs that are related to the system under investigation. This is mainly due to the nature of the system (\ref{eq3.1.2}), which consists of an equation and an inequality of fractional degree. 

One can plainly extend the definition of $ \mathfrak F$ and $ \mathfrak D $ given in (\ref{eq3.1.2}) to $s$ tuples by taking the additional coefficients to be equal to zero. Namely, for an $s$ tuple $ \tuplex$ we can rewrite $ \mathfrak F $ and $ \mathfrak D$ equivalently in the shape
	\begin{equation} \label{eq3.6.8}
		\begin{cases}
			\mathfrak F (\tuplex) = \lambda_1 x_1^\theta + \cdots + \lambda_\ell x_\ell^\theta + \mu_1 x_{\ell+1}^\theta + \cdots + \mu_{\ell+m} x_{\ell+m}^\theta + 0 x_{\ell + m +1}^\theta + \cdots + 0 x_s^\theta \\[10pt]
			\mathfrak D (\tuplex) = a_1 x_1^d + \cdots + a_\ell x_\ell^d + 0 x_{\ell+1}^d + \cdots + 0 x_{s- n}^d + b_1 x_{s-n+1}^d  + \cdots + b_n x_s^d.
		\end{cases}
	\end{equation}
So, from now on we take the argument in the expressions $ \mathfrak F$ and $ \mathfrak D $ to be $s$ tuples. For convenience in the following, we write $ \mathcal B $ to denote the box defined by
	\begin{equation*} 
		\mathcal B = \left[ \frac{1}{2} \eta_1, 2 \eta_1 \right] \bigtimes \cdots \bigtimes \left[ \frac{1}{2} \eta_s, 2 \eta_s \right],
	\end{equation*}
where $ \tupleeta = (\tuplex^\star, \tupley^\star, \tuplez^\star)$ with $ 0 < \eta_i < 1/2$  is a non-singular real solution of the system (\ref{eq3.1.3}), with $ \mathfrak F $ and $ \mathfrak  D$ defined as in (\ref{eq3.6.8}). Note that with this notation, we count solutions to the system (\ref{eq3.1.2}) with $(\tuplex, \tupley, \tuplez) \in P \mathcal B.$ 

We define the integral 
	\begin{equation*}
		\mathcal K ( \tuplebeta) = \int_{\mathcal B} e \left( \beta_\theta \mathfrak F(\tuplegamma ) + \beta_d \mathfrak D (\tuplegamma) \right) \text d \tuplegamma.
	\end{equation*}	
For future reference we note here that by (\ref{eq3.6.7}) with $P=1$ and since $ \text{meas}(\mathfrak B) = O(1)$ one has
	\begin{equation} \label{eq3.6.9}
		\mathcal K (\tuplebeta) \ll \left( 1 + | \beta_\theta|  \right)^{-(1 + \Delta)} \left( 1 + | \beta_d|  \right)^{-(1+ \Delta)}.
	\end{equation} 
Moreover, we set			
	\begin{equation} \label{eq3.6.10}
		\mathfrak J_0 = \int_{-\infty}^\infty \int_{-\infty}^\infty \mathcal K(\tuplebeta) \text d \tuplebeta.
	\end{equation}
In the light of (\ref{eq3.6.9}) the integral $ \mathfrak J_0$ is well-defined and absolutely convergent. One may express the complete singular integral $ \mathfrak J (\infty)$ in terms of $ \mathfrak J_0.$ Replace $ \gamma$ by $ \gamma P$ in (\ref{eq3.6.1}). Then make a change of variables in the right hand side of (\ref{eq3.6.3}) by putting
	\begin{equation*} 
		\begin{pmatrix}
			\beta_\theta \\
			\beta_d
		\end{pmatrix}
		=
		\begin{pmatrix}
			P^{-\theta} & 0 \\
			0 &  P^{-d} 
			\end{pmatrix}
		\begin{pmatrix}
			\beta_\theta^\prime  \\
			\beta_d^\prime
		\end{pmatrix}.
	\end{equation*}
This yields
	\begin{equation} \label{eq3.6.11}
		\mathfrak J (\infty) = P^{s- (\theta+d)} \mathfrak J_0.
	\end{equation}	

We now focus in analysing the integral $ \mathfrak J_0.$ To do so, we make use of a family of approximate singular integrals. For $ T \geq 1 $ we put
	\begin{equation} \label{eq3.6.12}
		\mathfrak J(T) = \int_{-\infty}^\infty \int_{-\infty}^\infty \mathcal K (\tuplebeta) k_T (\tuplebeta) \text d \tuplebeta,
	\end{equation}
where
	\begin{equation*}
		k_T (\tuplebeta) = \left(\frac{ \sin ( \pi \beta_\theta / T)}{ \pi \beta_\theta / T} \right)^2  \left(\frac{ \sin ( \pi \beta_d / T)}{ \pi \beta_d / T} \right)^2.
	\end{equation*}
Note again that by  (\ref{eq3.6.9}) the integral $ \mathfrak J (T) $ is well-defined and absolutely convergent. Two are the key properties of the family of integrals $ \mathfrak J (T).$ Firstly that $ \mathfrak J (T) \gg 1 $ and secondly that as $ T \to \infty$ one has $ \mathfrak J (T) \to \mathfrak J_0.$  To begin with, let us rewrite  the integrals $ \mathfrak J (T)$ using a Fourier transform formula. For $ T \geq 1 $ we put
	\begin{equation} \label{eq3.6.13}
		\psi_T (y) = 
			\begin{cases}
				T \left(  1 - T | y | \right), & \text{when } | y | \leq T^{-1}, \\[10pt]
				0, & \text{when } | y | > T^{-1}. 
			\end{cases}
	\end{equation}
A standard calculation as presented for example in \cite[Lemma 20.1]{davenport_book} reveals that
	\begin{equation*}
		\psi_T (y) = \int_{-\infty}^\infty e (\beta y) \left( \frac{ \sin \left( \pi \beta/T\right)}{ \pi \beta / T} \right)^2 \text d \beta,
	\end{equation*}
where clearly the integral is absolutely convergent. One may rewrite the integral $ \mathfrak J (T)$ defined in (\ref{eq3.6.12}) as follows
	\begin{equation*} 
			\mathfrak J (T)  = \int_{- \infty}^\infty \int_{-\infty}^\infty \left( \int_{\mathcal B} e (\beta_\theta \mathfrak F (\tuplegamma) + \beta_d \mathfrak D (\tuplegamma) \text d \tuplegamma \right) k_T (\tuplebeta) \text d \tuplebeta.
	\end{equation*}
Hence, invoking Fubini's theorem and appealing to (\ref{eq3.6.13}) one has
	\begin{equation} \label{eq3.6.14}
		\mathfrak J (T) = \int_{\mathcal B}  \psi_T (\mathfrak F (\tuplegamma)) \psi_T (\mathfrak D (\tuplegamma)) \text d \tuplegamma.
	\end{equation}

At this point we pause for a moment in order to exploit the assumption we have made that the system (\ref{eq3.1.2}) satisfies the local solubility condition. The conclusion we establish below plays an essential role in demonstrating that $ \mathfrak J (T) \gg 1 .$ The proof proceeds as in \cite[Lemma 6.2]{wooley_simult_eqts_91}, namely by using the implicit function theorem. We include a proof for the sake of completeness. In order to avoid confusion, let us observe here that in the statement of the lemma below we use $ \tupleeta$ to denote a non-singular real solution, as it is assumed in the statement of Theorem \ref{thm3.1.2}. It is at this step where we show that one can obtain a non-singular real solution with all of its components non-zero, and (as we explained in the introduction) by homogeneity one can additionally assume that its components lie in the interval $(0, 1/2).$ The non-singular real solution with these additional properties is the one considered in the definition of the box $ \mathcal B.$ 

	\begin{lemma} \label{lem3.6.4}
		Let $\tupleeta$ be a non-singular real solution of the system (\ref{eq3.1.3}), with $ \mathfrak F$ and $ \mathfrak D $ as in (\ref{eq3.6.8}). There exists locally an $(s-2)$-dimensional subspace $ \mathcal U $ of positive $(s-2)$-volume in a neighbourhood of $ \tupleeta ,$ on which one has $  \mathfrak F  = \mathfrak D =0.$ In particular, there exists a real solution $  \tupleeta^\prime = (\eta_1^\prime, \ldots, \eta_s^\prime) = (\zeta_1, \zeta_2, \tuplezeta) $ to the system (\ref{eq3.1.3}), with $ \tuplezeta \in \mathcal U$ and $ \eta_i^\prime \neq 0$ for all $ i.$
	\end{lemma}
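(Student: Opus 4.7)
My plan is to apply the implicit function theorem. First, I would exploit the non-singularity of $\tupleeta$: the $2 \times s$ Jacobian matrix $\partial(\mathfrak{F}, \mathfrak{D})/\partial(\eta_1, \ldots, \eta_s)$ has rank $2$ at $\tupleeta$, so there exist two indices $i \neq j$ whose columns are linearly independent. The $k$-th column of this Jacobian is proportional to the vector with entries $\lambda_k \eta_k^{\theta - 1}$ and $a_k \eta_k^{d - 1}$ (with $\lambda_k, a_k$ the appropriate coefficients, allowing zero as in (\ref{eq3.6.8})); since $\theta > d + 1 > 1$, a vanishing coordinate $\eta_k = 0$ would force the entire $k$-th column to be zero. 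Hence the pair $i, j$ must in fact satisfy $\eta_i, \eta_j \neq 0$, and after relabelling the variables we may take $i = 1$, $j = 2$, so that the $2 \times 2$ sub-Jacobian $\partial(\mathfrak{F}, \mathfrak{D})/\partial(\eta_1, \eta_2)$ is invertible at $\tupleeta$.

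Next, I would apply the implicit function theorem with respect to the first two coordinates. This produces an open neighborhood $V \subset \mathbb{R}^{s-2}$ of $(\eta_3, \ldots, \eta_s)$ and real analytic functions $\phi_1, \phi_2 : V \to \mathbb{R}$ with $\phi_k(\eta_3, \ldots, \eta_s) = \eta_k$ for $k = 1, 2$ satisfying
\begin{equation*}
\mathfrak{F}(\phi_1(\tuplezeta), \phi_2(\tuplezeta), \tuplezeta) = \mathfrak{D}(\phi_1(\tuplezeta), \phi_2(\tuplezeta), \tuplezeta) = 0 \qquad (\tuplezeta \in V).
\end{equation*}
The graph of $(\phi_1, \phi_2)$ over $V$ is then the desired local subspace $\mathcal{U}$: it has dimension $s - 2$, is contained in the common zero locus of $\mathfrak{F}$ and $\mathfrak{D}$, and has positive $(s-2)$-dimensional volume since it is parametrized smoothly and injectively by the open set $V \subset \mathbb{R}^{s-2}$.

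Finally, to extract a solution with every coordinate nonzero I would shrink $V$. By continuity and the fact that $\eta_1, \eta_2 \neq 0$, there is a sub-neighborhood $V' \subset V$ on which both $\phi_1$ and $\phi_2$ remain nonzero. Inside $V'$, the locus where at least one free coordinate $\zeta_k$ ($3 \leq k \leq s$) vanishes is a finite union of coordinate hyperplanes and therefore has $(s-2)$-dimensional Lebesgue measure zero. Selecting any $\tuplezeta^\prime \in V'$ off this exceptional set and setting $\tupleeta^\prime = (\phi_1(\tuplezeta^\prime), \phi_2(\tuplezeta^\prime), \tuplezeta^\prime)$ produces the required real solution with $\eta_k^\prime \neq 0$ for every $k$. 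I do not anticipate any serious obstacle; the one point that deserves genuine care is the rank argument at the start, verifying that the rank-$2$ condition automatically selects two indices at which $\tupleeta$ is nonzero, which relies on the explicit form of the Jacobian entries together with $\theta > d + 1 > 1$.
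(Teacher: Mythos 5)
Your proof matches the paper's strategy exactly: use the rank condition to identify two pivot coordinates at which $\tupleeta$ is nonzero, apply the implicit function theorem to solve for them in terms of the remaining $s-2$ variables, and then choose a generic point on the resulting $(s-2)$-dimensional graph to make every coordinate nonzero. Your derivation that the pivot coordinates must be nonzero --- observing that a vanishing coordinate annihilates the entire corresponding Jacobian column because $\theta - 1 > 0$ and $d - 1 \geq 1$ --- is a touch cleaner and more uniform than the paper's explicit determinant computation (which tacitly assumes both pivots are among the common variables $x_1, \ldots, x_\ell$), but the argument is otherwise the same.
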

	
	\begin{proof}
		By relabelling if necessary the variables, one has
			\begin{equation*}
				\text{det} 
					\begin{pmatrix}
						\displaystyle \frac{\partial \mathfrak F}{ \partial x_1} (\tupleeta) & \displaystyle  \frac{\partial \mathfrak F}{ \partial x_2} (\tupleeta) \\[10pt]
					\displaystyle  \frac{\partial \mathfrak D}{ \partial x_1} (\tupleeta) & \displaystyle  \frac{\partial \mathfrak D}{ \partial x_2} (\tupleeta)
					\end{pmatrix} \neq  0,
			\end{equation*}
		namely	
			\begin{equation*}
				\text{det}	
					\begin{pmatrix}
						\theta \lambda_1 \eta_1^{\theta-1} & \theta \lambda_2 \eta_2^{\theta-1} \\[10pt]
					d a_1 \eta_1^{d-1} & d a_2 \eta_2^{d-1}
					\end{pmatrix} = \theta d  \eta_1 \eta_2 \left( \lambda_1 a_2 \eta_1^{\theta - 2} \eta_2^{d-2} - \lambda_2 a_2 \eta_1^{d-2} \eta_2^{d-2} \right) \neq 0.
			\end{equation*}
		Hence, we deduce that $ \eta_1, \eta_2 \neq 0.$ Consider the generalised polynomial $ \mathfrak F $ and the polynomial $ \mathfrak D $ defined in (\ref{eq3.6.8}) as real valued functions defined in $(0, \infty)^s.$ Since some of the variables $x_i$ correspond only to the variables $z_i$ in (\ref{eq3.1.1}), and since we assume that $ \tupleeta$ is a non-singular solution in $ \mathbb R^s,$ these variables could actually equal to zero. By changing if necessary the sign of the corresponding coefficients we can assume that one has $ \eta_i \geq  0$ for all $i.$  If $ \tupleeta$ is a solution with some $z_i =0$ then we consider a sequence of points $ \tupleeta_n$ such that $ \tupleeta_n \in (0, \infty)^s$ and $ \tupleeta_n \to \tupleeta$ as $ n \to \infty.$ One can extend continuously and uniquely the corresponding derivatives of $ \mathfrak D $ to  $[0, \infty).$ Consider the map
			\begin{equation*}
				\Phi : (0, \infty)^{2 + (s-2)} \to \mathbb R^2, \hspace{0.3in} \tuplex \mapsto \Phi( \tuplex) = \left( \mathfrak F (\tuplex) , \mathfrak D ( \tuplex ) \right). 
			\end{equation*}
		By using the fact that $ \Phi( \tupleeta) = 0 = \lim_{n \to \infty} \Phi (\tupleeta_n)$ we deduce by the implicit function theorem (see for example \cite[Theorem 13.7]{apostol_book}), that there exists an open set $ \mathcal U \subset (0, \infty)^{s-2} $ whose closure contains the point $ (\eta_3, \ldots, \eta_s),$ and a continuous map $ \tupleg$ defined on  $\mathcal U$ and taking values in a neighbourhood of the point $ (\eta_1, \eta_2) \in (0, \infty)^2,$  such that for all $ \tuplezeta = (\zeta_3, \ldots, \zeta_s) \in \mathcal U $ one has
				\begin{equation} \label{eq3.6.15}
					\begin{cases}	
						\mathfrak F ( \tupleg(\tuplezeta), \tuplezeta) = 0 \\[10pt]
						\mathfrak D	( \tupleg(\tuplezeta), \tuplezeta ) = 0.
					\end{cases}
				\end{equation}	
		 Observe that if $ \tupleeta$ is a solution with all the components non-zero then we argue similarly (without the need of considering limit points) to draw the same conclusion. Thus we have showed the existence of an $(s-2)$-dimensional subspace in the neighbourhood of $ (\eta_3, \ldots, \eta_s),$ which has positive $(s-2)$-volume and on which one has $ \mathfrak F = \mathfrak D =0.$ We denote this subspace by $ \mathcal U.$ This establishes the main part in the statement of the lemma. 
		
		For the second assertion we argue as follows. One can choose $ \zeta_i \in \mathcal U $ sufficiently close to $ \eta_i$  for $ 3 \leq i \leq s.$ Namely, choose $ \zeta_i$ such that $ | \zeta_i - \eta_i|$ is sufficiently small. Then, we can solve the system (\ref{eq3.6.15}) with respect to $ \tupleg =: (\zeta_1, \zeta_2).$ Hence, we have found a tuple $ \tupleeta^\prime  = (\zeta_1, \zeta_2, \tuplezeta) $ which satisfies $ \mathfrak F( \tupleeta^\prime  ) = \mathfrak D (\tupleeta^\prime )=0.$ Recall that $\eta_1, \eta_2 \neq 0.$ Hence, by continuity we obtain that $ \zeta_1 , \zeta_2 \neq 0.$ Therefore, we can conclude that  $ \zeta_i \neq 0 $ for $ 3 \leq i \leq s.$ This completes the proof of the second part in the statement of the lemma.
	\end{proof}

We now exploit the conclusion of Lemma \ref{lem3.6.4}, in order to prove that $ \mathfrak J (T) \gg 1.$ Here we follow \cite[Lemma 2]{schmidt_simult_rat_zero_quad_forms}. 

	\begin{lemma} \label{lem3.6.5}
		One has 
			\begin{equation*}
				\mathfrak J (T) \gg 1.
			\end{equation*}
	\end{lemma}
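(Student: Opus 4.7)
The plan is to follow Schmidt's strategy, using the local non-singular solution produced in Lemma \ref{lem3.6.4} together with the implicit function theorem to exhibit, inside the box $\mathcal B$, a set of positive $s$-volume on which the integrand $\psi_T(\mathfrak F)\psi_T(\mathfrak D)$ contributes a quantity bounded below independently of $T$. Since $\psi_T\ge 0$, it suffices to produce any measurable subset of $\mathcal B$ giving a lower bound, so we may localise as narrowly as convenient around a convenient non-singular solution in the interior of $\mathcal B$.

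First, I would invoke Lemma \ref{lem3.6.4} to fix a non-singular real solution $\tupleeta' = (\zeta_1,\zeta_2,\tuplezeta)$ of $\mathfrak F = \mathfrak D = 0$ lying in the interior of $\mathcal B$, and (after relabelling) assume that
\begin{equation*}
 J(\tuplex) \;=\; \det
\begin{pmatrix}
 \partial_{x_1}\mathfrak F(\tuplex) & \partial_{x_2}\mathfrak F(\tuplex)\\[4pt]
 \partial_{x_1}\mathfrak D(\tuplex) & \partial_{x_2}\mathfrak D(\tuplex)
\end{pmatrix}
\end{equation*}
is non-zero at $\tupleeta'$. By continuity there exist $c>0$ and an open box $W = W_1 \times V \subset \mathcal B$ (with $W_1 \subset \mathbb R^2$ a neighbourhood of $(\zeta_1,\zeta_2)$ and $V \subset \mathbb R^{s-2}$ a neighbourhood of $\tuplezeta$) on which $|J(\tuplex)| \geq c$.

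Next, for each fixed $\tuplezeta' \in V$ the map $\Phi_{\tuplezeta'}\colon W_1 \to \mathbb R^2$ given by $(x_1,x_2)\mapsto (\mathfrak F(x_1,x_2,\tuplezeta'),\,\mathfrak D(x_1,x_2,\tuplezeta'))$ is a local diffeomorphism onto an open neighbourhood of $0\in\mathbb R^2$, and by shrinking $W$ if necessary one may assume that every $\Phi_{\tuplezeta'}$ contains the square $\mathcal Q_T = [-T^{-1},T^{-1}]^2$ in its image for all sufficiently large $T$ (ultimately we shall only use that this holds for $T \geq T_0$, say, since the family $\mathfrak J(T)$ is continuous and the final statement is a $T$-uniform lower bound). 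Using $\psi_T \ge 0$ we then have
\begin{equation*}
 \mathfrak J(T) \;\ge\; \int_V \int_{\Phi_{\tuplezeta'}^{-1}(\mathcal Q_T)} \psi_T\bigl(\mathfrak F(\tuplex)\bigr)\,\psi_T\bigl(\mathfrak D(\tuplex)\bigr)\,\mathrm dx_1\,\mathrm dx_2\,\mathrm d\tuplezeta'.
\end{equation*}
Changing variables from $(x_1,x_2)$ to $(u_1,u_2) = \Phi_{\tuplezeta'}(x_1,x_2)$ for each fixed $\tuplezeta'$ and using $|J|\ge c$ gives
\begin{equation*}
 \mathfrak J(T) \;\ge\; c^{-1}\int_V \left(\int_{-T^{-1}}^{T^{-1}} \psi_T(u_1)\,\mathrm du_1\right)\left(\int_{-T^{-1}}^{T^{-1}} \psi_T(u_2)\,\mathrm du_2\right)\mathrm d\tuplezeta'.
\end{equation*}

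A direct computation from the definition \eqref{eq3.6.13} shows $\int_{-T^{-1}}^{T^{-1}}\psi_T(y)\,\mathrm dy = 1$ for every $T\ge 1$, so the double integral in $u_1,u_2$ equals $1$ and one concludes
\begin{equation*}
 \mathfrak J(T) \;\ge\; c^{-1}\,\mathrm{vol}_{s-2}(V) \;\gg\; 1,
\end{equation*}
with the implicit constant independent of $T$, as required. The only step that demands genuine care is the uniform existence of the neighbourhoods $W_1,V$ on which $|J|$ is bounded below and every $\Phi_{\tuplezeta'}$ surjects onto $\mathcal Q_T$; this is a routine consequence of the inverse function theorem applied at $\tupleeta'$ together with the compactness of a small closed sub-box, so no additional input is needed beyond what is furnished by Lemma \ref{lem3.6.4}.
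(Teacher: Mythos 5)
Your approach is correct in spirit but differs genuinely from the paper's. The paper works directly with Lipschitz constants for $\mathfrak F$ and $\mathfrak D$ on $\mathcal B$: it defines a thin tube $S_{cT^{-1}}$ of $s$-volume $\gg T^{-2}$ around the graph of the implicit function $\tupleg$ over $\mathcal U$, shows $|\mathfrak F|, |\mathfrak D| < 1/(2T)$ there (and hence $\psi_T(\mathfrak F), \psi_T(\mathfrak D) \geq T/2$), and multiplies to get $\mathfrak J(T) \gg T^{-2}\cdot (T/2)^2 \gg 1$. You instead flatten $(\mathfrak F,\mathfrak D)$ by changing variables through the local diffeomorphism $\Phi_{\tuplezeta'}$ and exploit the normalisation $\int_{-T^{-1}}^{T^{-1}}\psi_T = 1$, which avoids estimating Lipschitz constants but requires the additional (routine) uniform surjectivity of $\Phi_{\tuplezeta'}$ onto a fixed square $\mathcal Q_T$ for $T \geq T_0$. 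Both routes rest squarely on Lemma \ref{lem3.6.4}, and both yield the bound only for $T$ large, which suffices since the lemma is used in conjunction with Lemma \ref{lem3.6.6} as $T\to\infty$.

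There is, however, a direction error in the Jacobian factor. Changing variables from $(x_1,x_2)$ to $(u_1,u_2)=\Phi_{\tuplezeta'}(x_1,x_2)$ replaces $\mathrm dx_1\,\mathrm dx_2$ by $|J|^{-1}\,\mathrm du_1\,\mathrm du_2$. For a \emph{lower} bound on $\mathfrak J(T)$ you therefore need a lower bound on $|J|^{-1}$, i.e.\ an \emph{upper} bound $|J|\leq C$ on the closure of $W$, giving $\mathfrak J(T) \geq C^{-1}\operatorname{vol}_{s-2}(V)$. The lower bound $|J|\geq c$ that you introduced serves a different purpose (it secures the quantitative inverse function theorem and the uniform image), and writing $\geq c^{-1}$ reverses the inequality. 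Since $|J|$ is continuous on a compact set it is bounded both above and below, so the fix is immediate, but as written the inequality is the wrong way round.
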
 

	\begin{proof}
		With the notation as in Lemma \ref{lem3.6.4}, we write $\tupleeta^\prime = (\zeta_1, \zeta_2, \tuplezeta)$ to denote a real solution to the system (\ref{eq3.6.8}) with $\tuplezeta \in \mathcal U$ and $ \eta_i^\prime \neq  0$ for $ 1 \leq i \leq s.$ We put $\tuplezeta = (\zeta_3, \ldots, \zeta_s).$ Note here that we assume $ \zeta_i \neq 0$ for $ 3 \leq  i\leq s.$ For $ \epsilon > 0$ we define 
		
			\begin{equation*}
				S_\epsilon = \left\{ (\tuplexi, \tuplezeta) : \tuplezeta \in \mathcal U \hspace{0.05in} \text{such that} \hspace{0.05in} \| \tupleg( \tuplezeta) - \tuplexi \|_2 < \epsilon  \right\},
			\end{equation*}
	 	where $ \| \cdot \|_2 $ stands for the usual euclidean norm in $ \mathbb R^{s-2}.$ In the set $ S_\epsilon$ we consider points $ \tuplexi \in \mathbb R^2$ which belong to a neighbourhood of the point $ \tupleg (\tuplezeta).$ Since $ \mathcal U$ is a subset of the interior of the box $ \mathcal B,$ one can now consider sufficiently small $ \epsilon$ so that $ S_\epsilon \subset \mathcal B.$ Moreover, by Lemma \ref{lem3.6.4} we know that $ \tupleg (\tuplezeta) \neq \textbf{0}.$ Hence, it becomes apparent that the set $ S_\epsilon$ has a positive $s$-volume.
	 	
	 	When viewed as real valued functions in $s$ variables, the generalised polynomial $ \mathfrak F $ and the polynomial $ \mathfrak D$ are continuously differentiable in the box $ \mathcal B,$ which is a compact subset of $ \mathbb R^s.$  Hence, $ \mathfrak F $ and $ \mathfrak D$ satisfy the Lipschitz condition with some constants $K_1$ and $K_2$ respectively. Put
	 		\begin{equation*}
	 				c = \frac{1}{2 \max\{K_1, K_2\} } > 0.
	 		\end{equation*}
		From now on we take $ T$ sufficiently large so that $  S_{c T^{-1}} \subset \mathcal B.$ 
		
		For $ (\tuplexi, \tuplezeta) \in S_{cT^{-1}}$ one has
			\begin{equation*}
				\displaystyle \frac{ \left| \mathfrak D ( \tuplexi , \tuplezeta) -  \mathfrak D ( \tupleg (\tuplezeta) , \tuplezeta) \right|}{ \left\| \left(\tuplexi, \tuplezeta \right) - \left(\tupleg (\tuplezeta) , \tuplezeta \right)  \right\|_2 } < K_2.
			\end{equation*}
		By (\ref{eq3.6.15}) one has $ \mathfrak D ( \tupleg (\tuplezeta) , \tuplezeta) =0.$ Moreover, one has $ \| (\tuplexi - \tupleg(\tuplezeta) , \textbf{0}) \|_2 < c/T$ and so the above inequality yields
			\begin{equation*}
				 	\left| \mathfrak D ( \tuplexi , \tuplezeta) \right| < \frac{c}{T} K_2 < \frac{1}{2T}.
			\end{equation*}
		Thus, for $ \tuplegamma = ( \tuplexi , \tuplezeta) \in S_{cT^{-1}}$ we deduce 
			\begin{equation*}
				\psi_T( \mathfrak D (\tuplegamma)) =  \max \left\{  0, T \left( 1- T  \left| \mathfrak D (\tuplegamma) \right| \right)  \right\} \geq \frac{T}{2}.
			\end{equation*}
		
		Similarly, when $ ( \tuplexi , \tuplezeta) \in S_{cT^{-1}}$ one can prove that 
			\begin{equation*}
				\left| \mathfrak F ( \tuplexi , \tuplezeta) \right| < \frac{1}{2T}.
			\end{equation*}
		Thus, we may again deduce that for $ \tuplegamma = ( \tuplexi , \tuplezeta) \in S_{cT^{-1}}$ one has
			\begin{equation*}
				\psi_T( \mathfrak F (\tuplegamma)) =  \max \left\{  0, T \left( 1- T  \left| \mathfrak F (\tuplegamma) \right| \right)  \right\} \geq \frac{T}{2}.
			\end{equation*}	
		
		 Note now that the set $ S_{cT^{-1}}$ has positive $s$-volume which is $ \gg T^{-2}.$ Hence, from the above conclusions and (\ref{eq3.6.14}) one has
			\begin{equation*}
				\mathfrak J (T) = \int_{\mathcal B} \psi_T( \mathfrak D (\tuplegamma)) \psi_T( \mathfrak F (\tuplegamma))  \text d \tuplegamma \gg \int_{S_{cT^{-1}}}   \left( \frac{T}{2} \right)^2 \gg \frac{1}{4},
			\end{equation*}
		which completes the proof of the lemma.
	\end{proof}
		
Next, we establish the second key property of the family of approximate integral $ \mathfrak J (T).$	
	
	\begin{lemma} \label{lem3.6.6}
		One has
			\begin{equation*}
				\mathfrak J(T) = \mathfrak J_0 + O\left(T^{- \Delta}\right),
			\end{equation*}
		where $ \Delta > 0$ is defined in (\ref{eq3.6.5}). In particular, the limit of $\mathfrak J(T) $ as $ T \to \infty $ exists and equals to $ \mathfrak J_0.$ 
	\end{lemma}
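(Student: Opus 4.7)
The plan is to use the absolute convergence supplied by (\ref{eq3.6.9}) to form the difference
$$\mathfrak{J}(T) - \mathfrak{J}_0 = \int_{-\infty}^\infty \int_{-\infty}^\infty \mathcal{K}(\tuplebeta) \bigl(k_T(\tuplebeta) - 1\bigr) \, \text{d}\tuplebeta,$$
and then balance the smallness of $k_T - 1$ near the origin against the polynomial decay of $\mathcal{K}$ at infinity. Since each factor $\sinc^2(\cdot) \in [0,1]$, the algebraic identity $(1+a)(1+b) - 1 = a + b + ab$ applied with $a = \sinc^2(\beta_\theta/T) - 1$ and $b = \sinc^2(\beta_d/T) - 1$ gives
$$\bigl|k_T(\tuplebeta) - 1\bigr| \le \bigl|\sinc^2(\beta_\theta/T) - 1\bigr| + \bigl|\sinc^2(\beta_d/T) - 1\bigr|,$$
and a Taylor expansion of $\sin$ at the origin combined with the trivial bound $\sinc^2 \le 1$ yields the uniform estimate $|\sinc^2(x) - 1| \ll \min(x^2, 1)$ for $x \in \mathbb{R}$.

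Next I would dissect the $\tuplebeta$-plane into the central box $[-T, T]^2$ and its complement. On the complement, the trivial estimate $|k_T - 1| \le 2$ together with (\ref{eq3.6.9}) and the standard one-dimensional tail bound $\int_{|\beta| > T} (1 + |\beta|)^{-(1+\Delta)} \text{d}\beta \ll T^{-\Delta}$, paired with an $O(1)$ integral in the orthogonal variable, delivers a tail contribution of $O(T^{-\Delta})$. On the central box, the Taylor estimate gives $|\sinc^2(\beta_j/T) - 1| \ll \beta_j^2/T^2$, and invoking (\ref{eq3.6.9}) one controls the contribution by
$$T^{-2} \int_{-T}^T \beta_\theta^2 (1 + |\beta_\theta|)^{-(1+\Delta)} \, \text{d}\beta_\theta \cdot \int_{-\infty}^\infty (1 + |\beta_d|)^{-(1+\Delta)} \, \text{d}\beta_d,$$
plus its symmetric counterpart. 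The outer factor is $O(1)$, and the truncated inner factor is $O(T^{\max(0, 2 - \Delta)})$, so the central region contributes $O(T^{-\min(2, \Delta)})$.

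Assembling the two regional contributions gives $|\mathfrak{J}(T) - \mathfrak{J}_0| \ll T^{-\Delta}$ in the regime $\Delta \le 2$ that underlies the stated bound, with the limit $\mathfrak{J}(T) \to \mathfrak{J}_0$ following immediately as $T \to \infty$. The only subtlety I foresee is a logarithmic factor in the truncated inner integral at the boundary value $\Delta = 2$; this is painlessly circumvented by replacing the Taylor bound $|\sinc^2(x) - 1| \ll x^2$ by the marginally weaker $|\sinc^2(x) - 1| \ll |x|^{\min(2,\Delta) - \varepsilon}$ for arbitrarily small $\varepsilon > 0$, which preserves the rate up to a loss absorbed into the implicit constant. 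Everything else reduces to routine estimation of polynomial integrals, so the tail-versus-smoothness balancing described above is the only real point of the argument.
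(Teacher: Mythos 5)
Your proof is correct and takes essentially the same approach as the paper: write the difference as $\int\int \mathcal{K}(\tuplebeta)\left(k_T(\tuplebeta)-1\right)\,\text{d}\tuplebeta$, bound $|1-k_T|$ by a sum of single-variable pieces controlled by $\min(|\beta|^2/T^2,1)$, and combine with the polynomial decay from (\ref{eq3.6.9}) by splitting at scale $T$. The paper packages the dyadic split via the pointwise estimate $1-\sinc^2(\pi\beta/T)\ll |\beta|^2/(T^2+|\beta|^2)$ and then cuts the one-dimensional integral at $\beta=T$, which is the same computation as your central-box/tail dissection. Your observation that the rate is really $O\left(T^{-\min(2,\Delta)}\right)$ (with a possible log at $\Delta=2$) is a fair catch — the paper's stated exponent $T^{-\Delta}$ is only literally correct when $\Delta\leq 2$, but since only $\mathfrak{J}(T)\to\mathfrak{J}_0$ is used downstream (Lemma \ref{lem3.6.7}), the discrepancy is immaterial, and your proposed workaround of weakening the Taylor exponent handles the boundary case cleanly.
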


	\begin{proof}
		By (\ref{eq3.6.9}) and (\ref{eq3.6.12}) we infer that
			\begin{equation*} 
				\begin{split}
					\mathfrak J_0 - \mathfrak J(T) & = \int_{-\infty}^\infty \int_{-\infty}^\infty \mathcal K(\tuplebeta) \left( 1 - k_T (\tuplebeta) \right) \text d \tuplebeta \\[10pt]
					& \ll \int_0^\infty \int_0^\infty ( 1 + \beta_\theta)^{-(1+ \Delta)} ( 1 +  \beta_d)^{-(1+ \Delta)} \left( 1 - k_T (\tuplebeta) \right) \text d \tuplebeta.		
				\end{split}	 
			\end{equation*} 
		
		Let $ \beta \in \mathbb R $ and let $T$ be large enough so that $ \frac{\pi | \beta| }{T} < 1.$ Then one has
			\begin{equation*}
				\left(\frac{ \sin ( \pi \beta / T)}{ \pi \beta / T} \right)^2 = 1 + O \left( \frac{   | \beta |^2}{T^2} \right),
			\end{equation*}
		which yields that
			\begin{equation*} 
				1 - \left(\frac{ \sin ( \pi \beta/ T)}{ \pi \beta / T} \right)^2 \ll \min \left\{ 1, \frac{|\beta|^2}{T^2} \right\} \ll \frac{ | \beta|^2}{T^2 + | \beta |^2},
			\end{equation*}
		and thus we deduce that	
			\begin{equation*}
				1 - k_T (\tuplebeta) \ll  \frac{ | \beta_\theta|^2}{T^2 + | \beta_\theta |^2} +  \frac{ | \beta_d|^2}{T^2 + | \beta_d |^2}.
			\end{equation*}
		We can now finish the proof easily. By symmetry one has
			\begin{equation*}
				\begin{split}
					\mathfrak J_0 - \mathfrak J(T) & \ll \left( \int_0^\infty ( 1 + \beta_\theta)^{-(1+ \Delta)} \text d \beta_\theta \right) \left( \int_0^\infty ( 1 +  \beta_d)^{-(1+ \Delta)}  \frac{ | \beta_d|^2}{T^2 + | \beta_d |^2} \text d \beta_d \right) \\[10pt]
					& \ll T^{-2} \int_0^T \beta_d^{1 - \Delta} \text d \beta_d + \int_T^\infty \beta_d^{-(1+ \Delta)} \text d \beta_d \\[10pt]
					& \ll T^{- \Delta},										
				\end{split}
			\end{equation*}
		which completes the proof.
	\end{proof}

Below we put together the outcomes of the so far analysis, in order to deduce the desired estimate for the truncated singular integral defined in (\ref{eq3.6.2}).

	\begin{lemma} \label{lem3.6.7}
		One has
			\begin{equation*}
				\mathfrak J^\pm (P^\xi, P^{\delta_0}) =  2 \tau \mathfrak J_0 P^{s - ( \theta +d)} + o \left( P^{s - (\theta +d)} \right),
			\end{equation*}
		where $ \mathfrak J_0 > 0$ is defined in (\ref{eq3.6.10}).
	\end{lemma}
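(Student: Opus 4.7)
The plan is to assemble the conclusion by chaining together the results already established in this subsection; no genuinely new input is required, so the main task is to organise the bookkeeping cleanly and to verify the positivity of $\mathfrak J_0$.

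First, I would apply Lemma \ref{lem3.6.3}, which gives the approximation
\begin{equation*}
	\mathfrak J^\pm (P^\xi, P^{\delta_0}) = 2\tau \mathfrak J(\infty) + o\!\left(P^{s-(\theta+d)}\right).
\end{equation*}
Next, I would invoke the scaling identity (\ref{eq3.6.11}), namely $\mathfrak J(\infty) = P^{s-(\theta+d)} \mathfrak J_0$, which was obtained by the explicit change of variables $\beta_\theta \mapsto P^{-\theta}\beta_\theta$, $\beta_d \mapsto P^{-d}\beta_d$ together with the rescaling $\gamma \mapsto P\gamma$ inside the continuous generating functions. Substituting this into the previous display yields
\begin{equation*}
	\mathfrak J^\pm (P^\xi, P^{\delta_0}) = 2\tau \mathfrak J_0 P^{s-(\theta+d)} + o\!\left(P^{s-(\theta+d)}\right),
\end{equation*}
which is the asymptotic formula claimed.

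It remains only to justify the assertion that $\mathfrak J_0 > 0$. For this step I would rely on the two key properties of the approximating family $\mathfrak J(T)$. Lemma \ref{lem3.6.6} gives $\mathfrak J(T) = \mathfrak J_0 + O(T^{-\Delta})$ with $\Delta > 0$, so letting $T \to \infty$ yields $\lim_{T \to \infty} \mathfrak J(T) = \mathfrak J_0$. On the other hand, Lemma \ref{lem3.6.5} provides the uniform lower bound $\mathfrak J(T) \gg 1$, with an implicit constant independent of $T$. Passing to the limit in this inequality forces $\mathfrak J_0 \gg 1$, so in particular $\mathfrak J_0 > 0$, completing the proof. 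The one subtlety to watch out for is ensuring that the implicit constant in Lemma \ref{lem3.6.5} does indeed not depend on $T$ (it depends only on the Lipschitz constants of $\mathfrak F$ and $\mathfrak D$ on $\mathcal B$ and on the non-singular solution $\tupleeta$), but this is already transparent from its proof, and no additional obstacle arises.
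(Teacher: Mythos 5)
Your proposal is correct and follows essentially the same route as the paper: chaining Lemma \ref{lem3.6.3} with the scaling identity (\ref{eq3.6.11}) to obtain the asymptotic formula, and then combining Lemma \ref{lem3.6.5} ($\mathfrak J(T) \gg 1$ uniformly in $T$) with Lemma \ref{lem3.6.6} ($\mathfrak J(T) \to \mathfrak J_0$) to conclude $\mathfrak J_0 \gg 1$. The paper's own proof is a two-line condensation of exactly these steps.
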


	\begin{proof}
	 	Combining Lemma \ref{lem3.6.3} and relation (\ref{eq3.6.11}) we deduce that
			\begin{equation*}
				\mathfrak J^\pm (P^\xi, P^{\delta_0}) = 2 \tau \mathfrak J_0 P^{s- (\theta+d)} +  o \left( P^{s - (\theta +d)} \right).
			\end{equation*}
		Moreover, by Lemma \ref{lem3.6.5} and Lemma \ref{lem3.6.6} we infer that $ \mathfrak J_0 \gg 1 $ which completes the proof.
	\end{proof}

\subsection{Singular series analysis}
	
We now study the singular series related to the equation $ \mathfrak D (\tuplex, \tuplez) = 0.$ For $ a \in \mathbb Z $ and $ q \in \mathbb N$ we write
	\begin{equation*}
		S(q,a) = \sum_{z=1}^q e \left( \frac{az^d}{q} \right).
	\end{equation*}
Furthermore we put	
	\begin{equation*}
		T(q, a) = q^{-(\ell + n)} \prod_{i =1}^\ell S(q, aa_i) \prod_{k=1}^n S(q, ab_k).
	\end{equation*}
Next, we introduce the truncated singular series and its completed analogue	
	\begin{equation*}
		\mathfrak S (P^\xi ) = \sum_{1 \leq q \leq P^\xi} \sum_{\substack{a=1 \\ (a,q)=1}}^q T(q,a) \hspace{0.4in} \text{and} \hspace{0.4in} \mathfrak S = \sum_{q=1}^\infty \sum_{\substack{a = 1 \\ (a,q)=1}}^q T(q, a). 	
	\end{equation*}
	
	\begin{lemma} \label{lem3.6.8}
	 Suppose that $ a \in \mathbb Z $ and $ q \in \mathbb N$ with $(a,q)=1.$ Then for each index $i$ and $k$ one has
	 	\begin{itemize}
	 		\item[(i)] $ 	S(q,aa_i) \ll q^{1 - 1/d}$;
	 		\item [(ii)] $ S(q, ab_k) \ll q^{1 - 1/d}.$
	 	\end{itemize}
	\end{lemma}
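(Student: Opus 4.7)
The plan is to reduce the bound to the classical Weyl estimate for complete Gauss-type sums with coprime argument, which is a standard result. I present the argument for part $(i)$; part $(ii)$ is identical with $b_k$ in place of $a_i$.

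First I would extract the common factor between $a_i$ and $q$. Put $g = \gcd(a_i, q)$ and write $q = g q'$. Since $(a,q) = 1$, one has $\gcd(a a_i, q) = g$, so we may set $c = a a_i / g$ and obtain $(c, q') = 1$ together with $a a_i / q = c / q'$. Because the summand $e(a a_i z^d / q) = e(c z^d / q')$ depends on $z$ only through its residue class modulo $q'$, and because in the range $1 \leq z \leq q = g q'$ each such class is hit exactly $g$ times, one obtains the identity
\begin{equation*}
  S(q, a a_i) \;=\; g \sum_{z=1}^{q'} e\!\left( \frac{c z^d}{q'} \right) \;=\; g \cdot S(q', c).
\end{equation*}

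Next I would invoke the classical Weyl bound for complete exponential sums: whenever $(c, q') = 1$ one has $|S(q', c)| \leq C(d) \, q'^{\,1 - 1/d}$. This is standard and a proof may be found in Vaughan's monograph \textit{The Hardy--Littlewood method}, Theorem 4.2. The argument there proceeds by first establishing multiplicativity, which reduces the problem to the case $q' = p^k$ for a prime $p$, and then treating the prime power case via Weyl's inequality combined with a Hensel-type lifting argument, splitting according to whether or not $p \mid d$.

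Combining the two displayed inputs yields
\begin{equation*}
  |S(q, a a_i)| \;\leq\; g \cdot C(d) \left( q/g \right)^{1 - 1/d} \;=\; C(d) \, g^{1/d} \, q^{1 - 1/d} \;\ll_{d, a_i}\; q^{1 - 1/d},
\end{equation*}
since $g \leq |a_i|$. The implicit constant depends only on $d$ and $a_i$, which is permissible given the conventions adopted at the end of the introduction. There is no genuine obstacle here: the only substantive ingredient is the classical bound $|S(q', c)| \ll q'^{\,1-1/d}$ for $(c,q') = 1$, and once this is cited the reduction above is essentially bookkeeping.
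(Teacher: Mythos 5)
Your proof is correct and follows essentially the same route as the paper: both factor out $g = (q, a_i)$ via the identity $S(q, a a_i) = g\, S(q/g,\, a a_i/g)$, observe that the new pair is coprime, and then cite the classical Weyl bound for complete exponential sums with coprime argument (you cite Vaughan, the paper cites Davenport's monograph; these are the same estimate). The only cosmetic difference is that you unpack the counting of residue classes and track the factor $g^{1/d}$ explicitly, whereas the paper asserts the factorisation directly and absorbs $(q,a_i) \le |a_i|$ into the implicit constant.
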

	
	\begin{proof}	 	
		 By \cite[Lemma 6.4]{davenport_book} we know that when $(a,q)=1$ one has
		 	\begin{equation*}
		 		S(q,a) \ll q^{1 - 1/d}.
		 	\end{equation*}
		 
		 Fix an index $i.$ Note that one has
		 	\begin{equation*}
		 		S (q,aa_i) = \sum_{z=1}^q e \left( \frac{a_iaz^d}{q} \right) = (q, a_i) S \left( \frac{q}{(q,a_i)} , \frac{a_i a}{ (q, a_i)} \right).
		 	\end{equation*}
		 Since $(a,q )=1$ one has $\left( \frac{q}{(q,a_i)} , \frac{a_i a}{ (q, a_i)} \right) =1.$ Thus we derive that
		 	\begin{equation*}
		 		 S \left( \frac{q}{(q,a_i)} , \frac{a_i a}{ (q, a_i)} \right) \ll q^{1- 1/d},
		 	\end{equation*}
		 which in turn, and since $a_i$ is a fixed integer, delivers the estimate
		 	\begin{equation*}
		 			S (q,aa_i) \ll q^{1- 1/d}.
		 	\end{equation*}
		 
		 Similarly we argue for $S(q,ab_k).$
	\end{proof}

	\begin{lemma} \label{lem3.6.9}
		Provided that $ \ell + n \geq  A_d +1 $ the singular series is absolutely convergent. Moreover one has $ \mathfrak S > 0$ and 
			\begin{equation*}
			 	\mathfrak S(P^\xi) = \mathfrak S + O \left( P^{-\xi/d} \right).
			\end{equation*}
	\end{lemma}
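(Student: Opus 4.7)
The plan is to carry out a standard singular series analysis: exploit the Weyl-type bound from Lemma \ref{lem3.6.8} to dominate $T(q,a)$ pointwise, deduce absolute convergence and the truncation error by a direct summation, and extract positivity from a multiplicative decomposition combined with the local solubility hypothesis.

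First I would establish absolute convergence. For $(a,q)=1$, Lemma \ref{lem3.6.8} gives $|S(q,aa_i)|, |S(q,ab_k)| \ll q^{1-1/d}$ uniformly in $a$, and substituting into the definition of $T(q,a)$ yields
\begin{equation*}
|T(q,a)| \ll q^{-(\ell+n)} \cdot q^{(\ell+n)(1-1/d)} = q^{-(\ell+n)/d}.
\end{equation*}
Summing trivially over the residues $a$ coprime to $q$ produces
\begin{equation*}
\left| \sum_{\substack{a=1 \\ (a,q)=1}}^q T(q,a) \right| \ll q^{1-(\ell+n)/d}.
\end{equation*}
Under the hypothesis $\ell + n \geq A_d + 1 = d^2 + 1$ one has $(\ell+n)/d \geq d + 1/d \geq 2 + 1/d$ for $d \geq 2$, and in particular the exponent $1 - (\ell+n)/d \leq -1 - 1/d$ is strictly below $-1$. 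Hence $\mathfrak S$ is absolutely convergent, and bounding the tail delivers
\begin{equation*}
\mathfrak S - \mathfrak S(P^\xi) \ll \sum_{q > P^\xi} q^{1 - (\ell+n)/d} \ll P^{\xi(2 - (\ell+n)/d)} \leq P^{-\xi/d},
\end{equation*}
which is the truncation estimate asserted in the lemma.

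For positivity I would invoke multiplicativity. The Chinese remainder theorem applied to the Gauss sums $S(q,\cdot)$ shows that the arithmetic function $q \mapsto A(q) := \sum_{(a,q)=1} T(q,a)$ is multiplicative, so the absolute convergence already secured yields the Euler product
\begin{equation*}
\mathfrak S = \prod_p \chi_p, \qquad \chi_p = \sum_{t=0}^\infty A(p^t).
\end{equation*}
A standard identification links each local factor with a $p$-adic density, namely $\chi_p = \lim_{\nu \to \infty} p^{-\nu(\ell+n-1)} M_p(\nu)$, where $M_p(\nu)$ counts the residue classes modulo $p^\nu$ on which $\mathfrak D(\tuplex,\tuplez) \equiv 0 \pmod{p^\nu}$. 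The non-singular local solubility hypothesis together with Hensel's lemma force $\chi_p > 0$ for every prime $p$. Refining the pointwise bound for $t \geq 1$ yields $\chi_p = 1 + O(p^{1-(\ell+n)/d})$ for all sufficiently large primes, with exponent at most $-1 - 1/d < -1$, so the Euler product converges absolutely to a strictly positive limit.

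The main obstacle I expect is conceptual rather than computational: transferring the abstract non-singular local solubility condition into the analytic statement $\chi_p > 0$ via Hensel's lemma. Once the standard dictionary between the exponential sums $S(q,\cdot)$ and point counts modulo $p^\nu$ is in place, the remainder is routine bookkeeping with the exponent $(\ell+n)/d$.
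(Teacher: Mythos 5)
Your proposal is correct and follows essentially the same route as the paper. The truncation estimate is computed identically: bound $T(q,a)$ via Lemma \ref{lem3.6.8}, sum over residues, and exploit $\ell + n \geq d^2 + 1$ to push the tail exponent below $-\xi/d$. For absolute convergence and positivity, you sketch the standard Euler-product-and-local-density argument (multiplicativity of $A(q)$ via CRT, identification of $\chi_p$ with a $p$-adic density, Hensel's lemma to transfer non-singular local solubility into $\chi_p > 0$, and the pointwise bound $\chi_p = 1 + O(p^{1-(\ell+n)/d})$ to secure convergence of the product to a positive limit); the paper simply cites Davenport's book and Davenport--Lewis \cite{davenport_lewis_homo_add_eqts} for exactly this analysis, so you are fleshing out the referenced argument rather than taking a different path.
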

	
	\begin{proof}
		The first two claims follow from the analysis of Davenport as presented in \cite[Sections 5 \& 6]{davenport_book}. Recall that we write $A_d  = d^2.$ By \cite[Theorem 1]{davenport_lewis_homo_add_eqts} we know that if $ \ell + n \geq A_d + 1$ then the singular series is absolutely convergent and positive. For the last assertion note that by Lemma \ref{lem3.6.8} one has		
			\begin{equation*}
				\left| \mathfrak S - \mathfrak S(P^\xi) \right| \leq \sum_{ q >P^\xi} \sum_{\substack{ a = 1 \\ (a,q)=1}}^q \left| T(q,a) \right| \ll \sum_{ q >P^\xi} q^{1 - (\ell+n)/d} \ll P^{(2- (\ell+n)/d) \xi}.
			\end{equation*}
		For $d \geq 2$ one has $ \ell + n \geq A_d +1 \geq 2d+1,$ where in the second inequality, the equality case holds only when $ d=2.$ Thus, we obtain $ \frac{\xi}{d} \leq  \left( \frac{\ell+n}{d} -2 \right) \xi.$ The previous estimate now delivers
			\begin{equation*}
				\left| \mathfrak S - \mathfrak S(P^\xi) \right| \ll P^{-\xi /d},
			\end{equation*}
		which completes the proof.	
	\end{proof}



\section{The asymptotic formula}

We now combine the results from the previous sections to establish the anticipated asymptotic formula for the counting function $ \mathcal N (P).$

For $ \alpha_d \in \mathfrak N_\xi (q,a)$ we write $ \alpha_d = \beta_d + a/q$ with $ | \beta_d | < P^{-d+ \xi}.$ From now on we take $ \tuplebeta = ( \beta_d, \alpha_\theta),$ with $ \alpha_\theta \in \mathfrak M.$ For each $i,j$ and $k$ we define the approximate generating functions
	\begin{equation*}
		f_i^\star(\tuplebeta) = \frac{1}{q} S_{f,i}(q, a) \upsilon_{f,i}( \tuplebeta), \hspace{0.1in} g_j^\star(\tuplebeta) = \upsilon_{g,j}( \tuplebeta), \hspace{0.1in} h_k^\star(\tuplebeta) = \frac{1}{q} S_{h,k}(q, a) \upsilon_{h,k}(\tuplebeta).
	\end{equation*}
Put
	\begin{equation*}
		\mathcal F^\star (\tuplebeta) = \prod_{i=1}^\ell f_i^\star(\tuplebeta) \prod_{ j=1}^m g_j^\star(\tuplebeta) \prod_{k=1}^n h_k^\star(\tuplebeta).
	\end{equation*}
We wish to compare $ \mathcal F (\tuplealpha) $ with $ \mathcal F^\star (\tuplebeta).$  Below we record a consequence of Poisson's summation formula.
	
	\begin{lemma} \label{lem3.7.1}
		Let $ f : [a,b] \to \mathbb R $ be a function differentiable in $[a,b].$ Suppose that $ f^\prime (x)$ is monotonic, and suppose that $ \left| f^\prime (x) \right| \leq A < 1 \hspace{0.02in} $ for all $ x \in [a,b].$ Then
			\begin{equation*}
				\sum_{ a < x \leq b} e  \left( f(x) \right) = \int_a^b  e  \left( f(x) \right)  \normalfont \text d x + O (1).
			\end{equation*}
	\end{lemma}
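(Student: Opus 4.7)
The plan is to apply Poisson's summation formula to the function $\phi(x) := \chi_{(a,b]}(x) e(f(x))$. Since $\phi$ is of bounded variation but not smooth, one may either smooth the indicator slightly and pass to the limit, or invoke the symmetric-partial-sum form of Poisson summation valid for BV functions, obtaining
\begin{equation*}
\sum_{a < n \leq b} e(f(n)) = \lim_{M \to \infty} \sum_{|m| \leq M} \int_a^b e(f(x) - mx)\, \text{d} x,
\end{equation*}
up to at most an $O(1)$ correction if $a$ or $b$ happens to be an integer. The $m = 0$ contribution reproduces precisely the desired main term $\int_a^b e(f(x))\, \text{d} x$, so the task reduces to showing that the sum over $m \neq 0$ contributes only $O(1)$.

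For $m \neq 0$, the phase $g_m(x) := f(x) - mx$ has derivative $g_m'(x) = f'(x) - m$ that is monotonic in $x$ (inherited from $f'$) and satisfies $|g_m'(x)| \geq |m| - A \geq 1 - A > 0$. Lemma \ref{lem3.6.2}(ii) with $k = 1$ then yields $|\int_a^b e(g_m(x))\, \text{d} x| \ll (|m| - A)^{-1}$. Since this bound sums only logarithmically, I would refine it by a single integration by parts to expose the explicit boundary contribution:
\begin{equation*}
\int_a^b e(f(x) - mx)\, \text{d} x = \left[\frac{e(f(x) - mx)}{2\pi i(f'(x) - m)}\right]_a^b + \int_a^b \frac{e(f(x) - mx)}{2\pi i (f'(x) - m)^2}\, \text{d} f'(x),
\end{equation*}
interpreted in the Riemann--Stieltjes sense, since only $f'$ (not necessarily $f''$) is assumed to exist. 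Because $f'$ is monotonic, $\int_a^b |\text{d} f'| = |f'(b) - f'(a)| \leq 2A$, so the error integral is absolutely bounded by $O((|m| - A)^{-2})$, which is summable over $m \neq 0$.

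It remains to treat the boundary contributions $\frac{e(f(c) - mc)}{2\pi i(f'(c) - m)}$ for $c \in \{a,b\}$. Summed against $m \neq 0$, they form two conditionally convergent series of the shape $e(f(c)) \sum_{m \neq 0} e(-mc)/(2\pi i(f'(c) - m))$, which can be evaluated in closed form via the classical identity $\sum_{m \in \mathbb{Z}} e(-mc)/(w - m) = \pi e(w(1/2 - \{c\}))/\sin(\pi w)$ (valid for $\{c\} \in (0,1)$ and $w \notin \mathbb{Z}$), applied with $w = f'(c) \in [-A, A]$. The apparent singularity $\sim 1/f'(c)$ in the closed form, arising as $f'(c) \to 0$, is cancelled precisely by the removed $m = 0$ term $1/f'(c)$, yielding an $O(1)$ remainder uniformly in $f'(c) \in [-A, A]$. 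Here the hypothesis $A < 1$ is crucial: it guarantees $|f'(c) - m| \geq 1 - A$ for every $m \neq 0$, so $\sin(\pi f'(c))$ is the only possible source of a pole, which is handled as above.

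The main obstacle I anticipate is the rigorous justification of the Poisson summation step for the discontinuous $\phi$, together with the careful bookkeeping demonstrating the cancellation of the $1/f'(c)$ singularity in the boundary series uniformly in $f'(c) \in [-A, A]$. Once these are in place, combining the main integral, the absolutely summable Stieltjes remainder, and the conditionally convergent boundary series delivers the claimed $O(1)$ bound.
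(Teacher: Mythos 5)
The paper does not give a proof of this lemma; it simply cites Titchmarsh, Lemma 4.8. Your argument is a genuine proof and it is correct. The BV-Poisson step is legitimate: the symmetric-partial-sum Poisson formula for a compactly supported function of bounded variation converges to $\tfrac12\bigl(\phi(n^+)+\phi(n^-)\bigr)$ at each integer, and since $e(f)$ is continuous the only jumps of $\chi_{(a,b]}e(f)$ occur at $x=a,b$, contributing $O(1)$. The Stieltjes integration by parts is sound because $f'$ monotonic gives $\int_a^b|\mathrm{d}f'|\le 2A$, the bound $|f'(x)-m|\ge|m|-A\ge 1-A>0$ keeps the denominators nondegenerate, and the Stieltjes chain rule $\mathrm{d}\bigl((f'-m)^{-1}\bigr)=-(f'-m)^{-2}\mathrm{d}f'$ is valid since $y\mapsto(y-m)^{-1}$ is $C^1$ on the range of $f'$. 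The Lipschitz/Eisenstein identity you quote is correct in the symmetric-partial-sum sense; after subtracting the $m=0$ term $1/w$ the remaining expression is continuous and uniformly bounded on $w\in[-A,A]$, $\alpha\in[0,1)$ (and the degenerate case $\alpha=0$ reduces to $\pi\cot\pi w - 1/w$, also bounded). One small economy you might prefer: the Eisenstein closed form can be bypassed by writing $(f'(c)-m)^{-1}=-m^{-1}+f'(c)\bigl(m(f'(c)-m)\bigr)^{-1}$; the second summand is absolutely $O(m^{-2})$, and the first yields the sawtooth Fourier series $\sum_{m\neq0}e(-mc)/(2\pi i m)=O(1)$, so no closed-form evaluation is needed.

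On the comparison: the textbook route (and the one in Titchmarsh) goes via Euler--Maclaurin/partial summation, writing $\sum_{a<n\le b}g(n)=\int_a^b g+\int_a^b\psi(x)g'(x)\,\mathrm{d}x+O(1)$ with $\psi(x)=\{x\}-1/2$, then expanding $\psi$ in its Fourier series and treating each mode by the first derivative test plus one integration by parts to recover the $O(m^{-2})$ decay. That reorganisation sidesteps the Poisson-for-BV subtlety you flag, at the cost of introducing $\psi$. Both routes rest on identical ingredients — monotonicity of $f'$, the first derivative test (Lemma \ref{lem3.6.2}(ii)), and the hypothesis $A<1$ keeping $|f'-m|$ bounded below for $m\ne 0$ — so your proof is essentially the standard one reorganised around full Poisson summation.
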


	\begin{proof}
		See \cite[Lemma 4.8]{titchmarsh_book}.
	\end{proof}

	\begin{lemma} \label{lem3.7.2}
		For each index $i,j, k$ and for any $ \tuplealpha = (\alpha_d, \alpha_\theta) \in \mathfrak N_\xi (q,a) \times \mathfrak M $ one has	
			\begin{itemize}
				\item[(i)] $  f_i (\tuplealpha) - f_i^\star (\tuplebeta) \ll P^{\delta
				_0 + \xi}$ ;
				\item[(ii)] $  g_j (\tuplealpha) - g_j^\star (\tuplebeta) \ll 1 $; 
				\item [(iii)] $  h_k (\tuplealpha) - h_k^\star (\tuplebeta) \ll P^{2 \xi}.$
			\end{itemize}
	\end{lemma}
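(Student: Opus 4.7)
My plan is to use the standard Poisson-summation/Euler--Maclaurin comparison between the exponential sum on a major arc and its continuous integral analogue. In all three cases the strategy will be: write $\alpha_d = a/q + \beta_d$, sort the summation variable modulo $q$, substitute $x = qy + r$ with $0 \le r < q$, pull out the complete Gauss sum over $r$, and apply Lemma \ref{lem3.7.1} to the inner sum in $y$.

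For $(i)$, I will write $x = qy + r$ and use $e(a_i a x^d/q) = e(a_i a r^d/q)$ to obtain
\[
f_i(\alpha_d,\alpha_\theta) = \sum_{r=1}^q e(a_i a r^d/q) \sum_{y \in J_r \cap \mathbb{Z}} e(\phi_r(y)),
\]
where $\phi_r(y) = a_i \beta_d (qy+r)^d + \lambda_i \alpha_\theta (qy+r)^\theta$ and $J_r \subset \mathbb{R}$ is the real interval of $y$ for which $qy + r$ lies in $(\frac{1}{2}x_i^\star P, 2 x_i^\star P]$. On this range $qy+r \asymp P$, and using $q \le P^\xi$, $|\beta_d| \le P^{-d+\xi}$ and $|\alpha_\theta| \le P^{-\theta+\delta_0}$, I will verify that
\[
|\phi_r'(y)| \ll q\bigl(P^{-1+\xi} + P^{-1+\delta_0}\bigr) \ll P^{\xi+\delta_0-1} < 1
\]
for $P$ large. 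The second derivative $\phi_r''(y)$ is a sum of two monomials in $qy+r$ of differing exponents ($d-2$ and $\theta-2$), so it vanishes at most once; splitting $J_r$ at this point when it occurs yields at most two sub-intervals on which $\phi_r'$ is monotonic. Lemma \ref{lem3.7.1} then delivers an error of $O(1)$ on each piece, and substituting back $t = qy + r$ converts the resulting integral into $q^{-1}\upsilon_{f,i}(\tuplebeta)$. Hence each residue-class sum equals $q^{-1}\upsilon_{f,i}(\tuplebeta) + O(1)$, and summing over the $q$ residues while recognising $S_{f,i}(q,a) = \sum_{r=1}^q e(a_i a r^d/q)$ produces $f_i(\tuplealpha) - f_i^\star(\tuplebeta) \ll q \ll P^\xi$, which is well within the claimed $P^{\delta_0+\xi}$.

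Part $(iii)$ will proceed identically but more simply, since there is no $\alpha_\theta$-term: then $\phi_r''$ has constant sign and no splitting is required, yielding $h_k(\alpha_d) - h_k^\star(\tuplebeta) \ll q \ll P^\xi$, comfortably inside $P^{2\xi}$. Part $(ii)$ requires no residue-class decomposition since $\alpha_\theta$ ranges only over $\mathfrak{M}$ with no rational approximation in play. I will apply Lemma \ref{lem3.7.1} directly to the full sum: with $\phi(y) = \mu_j \alpha_\theta y^\theta$ and $y \asymp P$, one has $|\phi'(y)| \ll P^{\delta_0-1} < 1$, while $\phi''$ has constant sign, so $\phi'$ is monotonic throughout. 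A single application of Lemma \ref{lem3.7.1} then gives $g_j(\alpha_\theta) - g_j^\star(\tuplebeta) \ll 1$.

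The only real obstacle is the verification that the inner-sum phase $\phi_r$ in part $(i)$ satisfies the hypotheses of Lemma \ref{lem3.7.1}: the bound $|\phi_r'| < 1$ rests on $\xi + \delta_0 < 1$, which holds because $\xi \le \delta_0/8$ and $\delta_0 = 2^{1-2\theta}$ is small, and the monotonicity of $\phi_r'$ is secured by splitting $J_r$ at the unique zero of the two-term expression $\phi_r''$. Everything else reduces to tracking the number of residue classes and extracting the complete Gauss sum.
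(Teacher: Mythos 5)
Your proof is correct, and for parts (i) and (iii) it takes a genuinely different route from the paper. Where you apply Lemma \ref{lem3.7.1} to each inner residue-class sum (after checking $|\phi_r'|<1$ and splitting $J_r$ at the unique zero of the two-term expression $\phi_r''$ to get monotonicity), the paper instead argues directly in the style of \cite[Lemma 4.2]{davenport_book}: it defines $\phi_i(t) = e(a_i\beta_d(qt+z)^d + \lambda_i\alpha_\theta(qt+z)^\theta)$, breaks the range $I(z)$ into $O(P/q)$ unit intervals, and bounds $|\sum_{A<y\le B}\phi_i(y)-\int_A^B\phi_i(t)\,\mathrm{d}t|$ by the fundamental theorem of calculus, obtaining $O(P^{\delta_0})$ per residue class (since $|\phi_i'(t)|\ll q|\beta_d|P^{d-1}+q|\alpha_\theta|P^{\theta-1}$), and hence $O(qP^{\delta_0})\ll P^{\delta_0+\xi}$ in total. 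Your van der Corput route yields $O(1)$ per residue class and hence the sharper $O(q)\ll P^\xi$; the trade-off is that you must address the monotonicity of $\phi_r'$, which you handle by noting that $\phi_r''$ is a two-term sum in $(qy+r)$ with exponents $d-2$ and $\theta-2$ and so vanishes at most once. The paper's direct comparison dispenses with monotonicity considerations entirely, at the modest cost of a weaker (but still sufficient) error term. Part (ii) is handled the same way in both: a single direct application of Lemma \ref{lem3.7.1}, with $\phi''$ of fixed sign and $|\phi'|\ll P^{\delta_0-1}<1$.
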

	 
	\begin{proof}
	 	For the estimate $(iii)$ one can argue as in \cite[Lemma 4.2]{davenport_book}. 
	 	
	 	For the estimate $(ii)$ we apply Lemma \ref{lem3.7.1}. Fix an index $j.$ Recall from (\ref{eq3.6.1}) the definition of the function $\upsilon_{g,j} (\tuplebeta).$ Then, the claimed estimate reads
	 		\begin{equation*}
	 			\sum_{ \frac{1}{2} y_i^\star P < y \leq 2y_i^\star P} e( \mu_j \alpha_\theta y^\theta) -  \int_{ \frac{1}{2}y_j^\star P}^{2 y_j^\star P} e ( \mu_j \alpha_\theta \gamma^\theta)  \text d \gamma = O (1),
	 		\end{equation*}
	 	for $ \alpha_\theta \in \mathfrak M.$ By taking the complex conjugate it suffices to prove the above estimate when $ \alpha_\theta > 0.$ For a real variable $t$ we define the function 
	 		\begin{equation*}
	 		 	\phi : \left(\frac{1}{2} y_i^\star P, 2y_i^\star P\right] \to \mathbb R, \hspace{0.2in} \phi (t) = \mu_j \alpha_\theta t^\theta.
	 		\end{equation*}
	 	The function $ \phi^{\prime \prime} (t)$ is of fixed sign and so $ \phi^\prime (t)$ is monotonic. Moreover, for $ \alpha_\theta \in \mathfrak M$ and for large enough $P$ one has
	 		\begin{equation*}
	 			| \phi^\prime (t) | = | \mu_j| \theta \alpha_\theta t^{\theta -1} \leq | \mu_j| \theta (2 y_i^\star)^{\theta-1} P^{-1 + \delta_0} < 1,
	 		\end{equation*}
	 	where recall from (\ref{eq3.2.7}) that $ \delta_0 < 1.$ Thus, Lemma \ref{lem3.7.1} is applicable and yields the desired conclusion.
	 	
	 	Now we prove estimate $(i).$ Here we argue as in \cite[Lemma 4.2]{davenport_book}. We fix an index $i.$ Decomposing into residue classes modulo $q$ and writing $ x=qy+z$ with $ 1 \leq z \leq q $ we obtain
			\begin{equation} \label{eq3.7.1}
				\begin{split}	
					f_i(\tuplealpha) &= \sum_{ z=1}^q \sum_{y \in I(z)} e \left(a_i ( \beta_d + a/q) (qy+z)^d + \lambda_i \alpha_\theta (qy+z)^\theta \right) \\[10pt]
					& = \sum_{z=1}^q e \left( a_i a z^d /q \right) \sum_{y \in I}  e \left( a_i  \beta_d (qy+z)^d + \lambda_i \alpha_\theta (qy+z)^\theta \right),
				\end{split}			
			\end{equation}
		where $I = I(z)$ is the interval defined by 
			\begin{equation*} 
				I (z) = \left( \frac{ \frac{1}{2}x_i^\star P -z}{q}, \hspace{0.05in} \frac{2 x_i^\star P - z}{q} \right].
			\end{equation*}
		For ease of notation we denote the endpoints of the interval $I$ by $A $ and $B,$ namely we put
			\begin{equation*}
				A = \frac{ \frac{1}{2}x_i^\star P -z}{q} \hspace{0.3in} \text{and} \hspace{0.3in} B = \frac{2 x_i^\star P - z}{q},
			\end{equation*}
		and we write $I = (A,B].$
				
		For $t \in \mathbb R$ we put 	
			\begin{equation*}
				\phi_i (t)  =  e \left( a_i \beta_d (qt+z)^d + \lambda_i \alpha_\theta (qt+z)^\theta \right).
			\end{equation*}
		The function $ \phi_i$ is a holomorphic complex valued function of the real variable $t.$ Consider an arbitrary interval $ [x,x+1] \subset \mathbb R$ of length equal to $1.$ By the fundamental theorem of calculus one has for any $ t\in [x, x+1]$ that
			\begin{equation*}
				\left| \phi_i(t) - \phi_i(x)  \right| = \left| \int_x^t \phi_i^\prime (u) \text d u \right| \leq \max_{ u \in [x,x+1]} \left| \phi_i^\prime (u) \right|.
			\end{equation*}
		One can break the interval $ I$ into $ \ll B - A = O \left( P q^{-1} \right)$ unit intervals of the shape $ [x, x+1]$ with $ x \in \mathbb Z,$ together with two possible broken intervals in the case where at least one of the endpoints $A$ and $B$ of the interval $I$ is not an integer. Then, we deduce that
			\begin{equation*}
				\begin{split}	
					\left| \sum_{ A < y \leq B } \phi_i (y) - \int_A^B \phi_i (t) \text d t  \right| & \ll \sum_{ A < y \leq B} \int_y^{y+1} \left| \phi_i(y) - \phi_i(t) \right| \text d t + \max_{A < y \leq B} \left| \phi_i (t) \right| \\[10pt]
					& \ll Pq^{-1} \max_{ A < t \leq B} \left| \phi_i^\prime (t) \right| + \max_{A < t \leq B} \left| \phi_i (t) \right|.
				\end{split}
			\end{equation*} 
		Clearly, $ | \phi_i (t) | \leq 1 $ for all $ t .$ One has
			\begin{equation*}
				 \phi_i^\prime (t) = 2 \pi i \left( a_i d q \beta_d (qt + z)^{d-1} + \lambda_i \theta q \alpha_\theta (qt + z)^{\theta -1} \right) \phi_i (t).
			\end{equation*}
		Hence, for any $ t \in I$ one has
			\begin{equation*}
				\left| \phi_i^\prime (t) \right| \ll q | \beta_d| P^{d-1} + q | \alpha_\theta |P^{\theta -1}.
			\end{equation*}
		Therefore for $ (\alpha_d, \alpha_\theta) \in \mathfrak N_\xi(q,a) \times \mathfrak M$ and since $ \xi < \delta_0,$ the preceding estimate now delivers
			\begin{equation} \label{eq3.7.2}
				\left| \sum_{ A < y \leq B } \phi_i (y) - \int_A^B \phi_i (t) \text d t  \right|  \ll  P^{\delta_0 }.
			\end{equation}

		We put $ q t + z = \gamma$ and make a change of variables. Then one has
			\begin{equation*} \label{fi_fistar_phi_integral_upsilon}
				 \int_A^B \phi_i (t) \text d t  = \frac{1}{q} \int_{ \frac{1}{2} x_i^\star P}^{ 2 x_i^\star P} e (a_i \beta_d \gamma^d + \lambda_i \alpha_\theta \gamma^\theta ) \text d \gamma = \frac{1}{q} \upsilon_{f,i} (\tuplebeta),
			\end{equation*}
		where bear in mind that $ \tuplebeta = (\beta_d, \alpha_\theta) = (\alpha_d - a/q, \alpha_\theta).$ Putting together (\ref{eq3.7.1}) and (\ref{eq3.7.2}) yields			
			\begin{equation*}
				\begin{split}
					f_i( \tuplealpha) &  = \sum_{z=1}^q e\left( a_i a z^d /q \right) \left(  \int_A^B \phi (t) \text d t  +  P^{\delta_0} \right) \\[10pt]					
					 & = \frac{1}{q} \sum_{z=1}^q e\left( a_i a z^d /q \right) \upsilon_{f,i} (\tuplebeta) +  O \left(  P^{\delta_0 + \xi} \right),					
				\end{split}
			\end{equation*}	
		 where in the last step we used the fact that $ 1 \leq q \leq P^\xi.$ The proof is now complete.
	 \end{proof}
 
By Lemma \ref{lem3.7.2} and using a standard telescoping identity one has that
 	\begin{equation*} 
 		\begin{split}	
 			\mathcal F( \tuplealpha) - \mathcal F^\star(\tuplebeta) & \ll P^{s-1} \left( | f_i - f_i^\star| + | g_j - g_j^\star | + | h_k - h_k^\star| \right) \\[5pt]
			& \ll P^{s-1 + \delta_0 + \xi}.
 		\end{split}
 	\end{equation*}
 Moreover one has
	\begin{equation*} 
		\text{meas} \left( \mathfrak N_\xi (q,a) \times \mathfrak M\right) \asymp P^{-d + \xi} \cdot P^{-\theta + \delta_0} = P^{-(\theta+d) + \delta_0 + \xi}.
	\end{equation*} 
Next, note that one has $ \mathcal F^\star(\tuplebeta) = V( \tuplebeta) T(q,a).$ Integrating over the set $ \mathfrak N_\xi(q,a) \times \mathfrak M  $ against the measure $ K_\pm (\alpha_\theta) \text d \tuplealpha$ and taking into account the preceding observations reveals
	\begin{equation*}
		\int_{ \mathfrak M} \int_{\mathfrak N_\xi (q,a) } \mathcal F( \tuplealpha) K_\pm(\alpha_\theta) \text d \tuplealpha = T(q,a) \int_{\mathfrak M} \int_{\mathfrak N_\xi (q,a)}  V( \tuplebeta) K_\pm(\alpha_\theta) \text d \tuplebeta + E,
	\end{equation*}	
where 
	\begin{equation*}
		E = O \left( P^{s - (\theta+d) - 1 + 2(\delta_0 + \xi)} \right).
	\end{equation*}
One can now sum over $ 1 \leq q \leq P^\xi$ and $ 1 \leq r \leq q$ to conclude that
	\begin{equation*} \label{4_asym_for_maj_arc}
		\int_{\mathfrak M} \int_{\mathfrak N_\xi} \mathcal F( \tuplealpha) K_\pm (\alpha_\theta) \text d \tuplealpha = \mathfrak S(P^\xi) \mathfrak J^\pm (P^\xi, P^{\delta_0}) + O \left( P^{s - (\theta+d) - 1 + 2\delta_0 + 4 \xi} \right).
	\end{equation*} 	
Recall from (\ref{eq3.2.7}) that $ \delta_0 = 2^{1-2\theta}$ and from (\ref{eq3.2.8}) that $ 0 < \xi \leq \delta_0 /8.$ Recall that we assume $ \theta > d + 1 \geq 3.$ Hence, for the error term in the above asymptotic formula one has
	\begin{equation*}
		 P^{s - (\theta+d) - 1 + 2\delta_0 + 4 \xi} \ll  P^{s - (\theta +d) - 1 + \frac{5}{2} \delta_0} = o \left( P^{s- (\theta +d)} \right).
	\end{equation*}

By Lemma \ref{lem3.6.7} and Lemma \ref{lem3.6.9} one has
	\begin{equation*}
		\mathfrak S(P^\xi) \mathfrak J^\pm (P^\xi, P^{\delta_0})  = 2 \tau \mathfrak J_0 \mathfrak S P^{s - (\theta +d)} + o \left( P^{s- (\theta +d)} \right).
	\end{equation*}
Thus we conclude that
	\begin{equation*}
		\int_{\mathfrak P} \mathcal F( \tuplealpha) K_\pm (\alpha_\theta) \text d \tuplealpha  = 2 \tau \mathfrak J_0 \mathfrak S P^{s - (\theta +d)} + o \left(P^{s- (\theta +d)}\right),
	\end{equation*}
where recall that $ \mathfrak P =  \mathfrak N_\xi \times  \mathfrak M .$ Upon invoking (\ref{eq3.2.10}) and taking into account Lemma \ref{lem3.4.12} and Lemma \ref{lem3.5.1}, the proof of Theorem \ref{thm3.1.2} is complete.

\bigskip 

{\textbf{Acknowledgements.}} This paper is based on work appearing in the author's Ph.D. thesis at the University of Bristol and was supported by a studentship sponsored by a European Research Council Advanced Grant under the European Union's Horizon 2020 research and innovation programme via grant agreement No. 695223. The author wishes to thank Prof. Trevor D. Wooley for suggesting this area of research and for helpful conversations and comments. The author wishes also to thank Prof. Angel V. Kumchev for feedback on the material of this paper.

\end{document}